  \newtheorem{theorem}{Theorem}[section]
  \newtheorem{lemma}[theorem]{Lemma}
  \newtheorem{corollary}[theorem]{Corollary}
  \newtheorem{hypothesis}[theorem]{Hypothesis}
  \theoremstyle{definition}
  \newtheorem*{remark}{Remark}
  \newtheorem{example}[theorem]{Example}%%%%%%%%%%%%%%%%%%%%%%%%%%%%%%%%%%%%%%%%%%%%%%%%%%%%%%%%%%%
  \numberwithin{equation}{section}
  \newcommand{\E}{{\mathsf E}}
  \newcommand{\supp}{{\operatorname{supp}}}
  \newcommand{\tr}{\operatorname{tr}}
\DeclareMathOperator{\Int}{int}
\DeclareMathOperator{\Mat}{Mat}
  \newcommand{\cC}{{\mathcal{C}}}
  \newcommand{\cJ}{{\mathcal{J}}}
  \newcommand{\cK}{{\mathcal{K}}}
  \newcommand{\cS}{{\mathcal{S}}}
  \newcommand{\cT}{{\mathcal{T}}}
  \newcommand{\fH}{{\mathfrak{H}}}
  \newcommand{\fK}{{\mathfrak{K}}}
  \newcommand{\fM}{{\mathfrak{M}}}
  \newcommand{\fS}{{\mathfrak{S}}}
  \newcommand{\e}{{\varepsilon}}
  \newcommand{\z}{\zeta}
  \renewcommand{\Re}{\operatorname{Re}}
  \renewcommand{\Im}{\operatorname{Im}}
  \newcommand{\esssupp}{\operatorname{ess\,supp}}
  \newcommand{\SL}{\text{SL}}
  \renewcommand{\i}{\infty}
  \newcommand{\bbD}{\mathbb{D}}
  \newcommand{\bbN}{\mathbb{N}}
  \newcommand{\bbZ}{\mathbb{Z}}
  \newcommand{\bbR}{\mathbb{R}}
  \newcommand{\bbC}{\mathbb{C}}
  \newcommand{\AC}{\mathrm{AC}}
  \newcommand{\loc}{\mathrm{loc}}
  \newcommand{\ac}{\mathrm{ac}}
  \newcommand{\spann}{\mathrm{span}}
\author[B.\ Eichinger]{Benjamin Eichinger}
\address{Institute for Analysis, Johannes Kepler University, Linz, A-4040, Austria}
\email{benjamin.eichinger@jku.at}
\thanks{B.\ E.\ was supported by the Austrian Science Fund FWF, project no: P33885}
\author[M.\ Luki\'c]{Milivoje Luki\'c}
\address{Department of Mathematics, Rice University, Houston, TX~77005, USA}
\email{ milivoje.lukic@rice.edu}
\thanks{M.L.\ was supported in part by NSF grant DMS--1700179.}
\author[B.\ Simanek]{Brian Simanek}
\address{Department of Mathematics, Baylor University, Waco, TX, USA}
\email{brian\_simanek@baylor.edu}
\thanks{B.\ S.\ was supported by Simons Foundation Collaboration grant 707882.}
\title{An approach to universality using Weyl $m$-functions}
\begin{document}
	\maketitle
	
	\begin{abstract}
		We describe an approach to universality limits for orthogonal polynomials on the real line which is completely local and uses only the boundary behavior of the Weyl $m$-function at the point. We show that bulk universality of the Christoffel--Darboux kernel holds for any point where the imaginary part of the $m$-function has a positive finite nontangential limit. This approach is based on studying a matrix version of the Christoffel--Darboux kernel and the realization that bulk universality for this kernel at a point is equivalent to the fact that the corresponding $m$-function has normal limits at the same point. Our approach automatically applies to other self-adjoint systems with $2\times 2$ transfer matrices such as continuum Schr\"odinger and Dirac operators. We also obtain analogous results for orthogonal polynomials on the unit circle. 
	\end{abstract}

%\tableofcontents

\section{Introduction}

Let $\mu$ be a probability measure on $\bbR$ with all finite moments,
\[
\int \lvert \xi \rvert^n \,d\mu(\xi) < \infty, \qquad \forall n\in\bbN.
\]
We will always assume that $\mu$ has infinite support (in the sense of cardinality; this is not to be confused with boundedness of the support -- $\mu$ may or may not have compact support). Then associated orthonormal polynomials $\{p_j(z) \}_{j=0}^{\infty}$ are obtained by the Gram--Schmidt process from the sequence of monomials $\{z^j\}_{j=0}^\infty$ in $L^2(\bbR,d\mu)$.  The Christoffel--Darboux (CD) kernel is
\[
K_n(z,w)=\sum_{j=0}^{n-1} p_j(z)  \overline{p_j(w)}.
\]
It is the reproducing kernel for the subspace $\spann \{1, z,\dots, z^{n-1}\}$ in $L^2(\bbR,d\mu)$ and is a central object in the theory of orthogonal polynomials \cite{SimonCDkernel}. We note that there are alternative notations, with different placements of complex conjugate and/or summation up to $n$ instead of $n-1$.

Of particular interest are the rescaled limits of CD kernels
\begin{equation}\label{genuniv}
\lim_{n\to\infty} \frac 1{\tau_n} K_n \left(\xi+ \frac z{\tau_n} ,\xi+ \frac w{\tau_n} \right)
\end{equation}
for an appropriate sequence $\tau_n \to \infty$ and $z,w\in\bbC, \xi\in\bbR$. They are called universality limits because the limit is often found to be a standard kernel: the most common phenomenon is bulk universality, in which case \eqref{genuniv} gives a (suitably rescaled) $\sin(\overline{w} - z) / (\overline{w} - z)$, called the sinc (or sine) kernel. 
The motivation for this line of inquiry comes from random matrix theory, where universality limits describe the local eigenvalue statistics of unitary ensembles \cite{DeiftOPandRM}. The interest in this ``universal'' behavior dates back to foundational work in random matrix theory by Wigner \cite{Wigner55}, who proposed local eigenvalue statistics of random matrices as a model for local statistical behavior of resonances in scattering theory.

Universality limits were first studied at scale $\tau_n = K_n(\xi,\xi)$ and proved by Riemann--Hilbert methods for a large family of  purely absolutely continuous measures with real analytic weights \cite{DKMVZ1,DKMVZ2,DKMVZ3,DeiftOPandRM,KvL02}. A breakthrough result by Lubinsky \cite{LubinskyAnnals}, with further developments by \cite{Findley08,SimonTwoExt08,TotikUniv09,Totik16}, instead requires Stahl--Totik regularity \cite{StahlTotik92} (a global condition)  and Lebesgue point and local Szeg\H o conditions; most of those results are restricted to compactly supported measures (but see \cite{LevLub09,Mitkovski}). In this paper, we present a new approach to universality limits which is based on Weyl $m$-functions.

We will always assume that $\mu$ corresponds to a determinate moment problem, i.e., that there is no other measure on $\bbR$ with the same sequence of moments; one of the equivalent characterizations of this condition is that 
\[
\lim_{n\to\infty} K_n(z,z)  = \infty
\]
for one (and therefore all) $z\in \bbC \setminus \bbR$. This is equivalent to the limit point case in the language of Jacobi matrices \cite[Section 3.8]{SimonSzegoBook}. In particular, any compactly supported $\mu$ corresponds to a determinate moment problem; so do the normalized exponential weights $C_\alpha e^{-\lvert \xi \rvert^\alpha} \,d\xi$ for $\alpha \ge 1$ and $e^{-Q(\xi)} \,d\xi$ for even degree polynomials $Q$ with positive leading coefficient  \cite[Section 2.5]{DeiftOPandRM}. 
In summary, the assumptions on $\mu$ will be:

\begin{hypothesis} \label{hypothesismu}
$\mu$ is a probability measure on $\bbR$ with infinite support and finite moments.  It corresponds to a  determinate moment problem (i.e. the limit point case).
\end{hypothesis}

In our first theorem, we conclude a universality limit from a nontangential limit of the imaginary part of the $m$-function, defined on $\bbC_+ = \{ z \in \bbC \mid \Im z > 0\}$ by
\begin{equation}\label{Stieltjes}
m(z) = \int \frac 1{\xi - z} \, d\mu(\xi).
\end{equation}

\begin{theorem}\label{thmImaginaryLimitJacobi}
Let $\mu$ obey Hypothesis~\ref{hypothesismu}. Let $m(z)$ be its Stieltjes transform \eqref{Stieltjes}. Let $\xi \in\bbR$ and assume  that for some $0 < \alpha < \pi/2$, the limit
\begin{equation} \label{limitImm}
f_\mu(\xi) := \frac{1}{\pi}\lim\limits_{\substack{z \to \xi \\ \alpha \le \arg (z-\xi) \le \pi - \alpha}} \Im m(z)
\end{equation}
exists and $0 < f_\mu(\xi) < \infty$. Then uniformly on compact regions of $(z,w) \in \bbC \times \bbC$,
\begin{equation}\label{bulkuniversality3aug}
\lim_{n\to\infty}\frac{K_n\left(\xi+\frac{z}{f_\mu(\xi) K_n(\xi,\xi)},\xi+\frac{w}{f_\mu(\xi) K_n(\xi,\xi)}\right)}{K_n(\xi,\xi)}=\frac{\sin(\pi(\overline{w}-z))}{\pi(\overline{w}-z)}
\end{equation}
(of course, the right-hand side is interpreted as $1$ for $\overline{w} = z$).
\end{theorem}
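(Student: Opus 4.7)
The strategy is to pass through the matrix Christoffel--Darboux kernel that is the central novel object of this paper. The abstract advertises the key equivalence: bulk universality of the matrix kernel at $\xi$ is equivalent to the existence of a nontangential limit of $m$ at $\xi$. Theorem~\ref{thmImaginaryLimitJacobi} should be the scalar consequence of this matrix statement together with a standard Herglotz upgrade.

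First I would construct a $2\times 2$ matrix-valued kernel $\mathbb{K}_n(z,w)$ from the transfer matrices of the Jacobi recursion (equivalently, from the first- and second-kind orthogonal polynomials $p_j,q_j$), arranged so that the scalar kernel $K_n(z,w)$ appears as a matrix entry and so that $\mathbb{K}_n(z,z)$ is controlled by $\Im m_n(z)$, the truncated $m$-function, through a transfer-matrix Wronskian (matrix CD) identity. From the hypothesis that $\Im m$ has a finite positive nontangential limit $\pi f_\mu(\xi)$ at $\xi$, I would upgrade to a full nontangential limit $m(\xi+i0)=\lambda\in\bbC_+$ by standard Herglotz reasoning: the positive finite limit of $\Im m$ confines $m$ to a half-plane within any Stolz angle, and an application of Lindel\"of's theorem to an appropriate Cayley transform of $m$ prevents oscillation of $\Re m$.

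Next I would apply the main matrix equivalence: for a scaling $\tau_n\to\infty$ chosen so that $\mathbb{K}_n(\xi,\xi)/\tau_n$ stabilizes,
\[
\frac{1}{\tau_n}\,\mathbb{K}_n\!\left(\xi+\frac{z}{\tau_n},\,\xi+\frac{w}{\tau_n}\right)\longrightarrow \mathbb{K}_\infty^{\lambda}(z,w)
\]
locally uniformly on $\bbC\times\bbC$, where $\mathbb{K}_\infty^\lambda$ is the reproducing kernel of a model de Branges (Paley--Wiener-type) space determined by $\lambda$: explicitly, a rank-one matrix built from $\lambda$ multiplied by the sinc kernel $\sin(\pi(\overline{w}-z))/(\pi(\overline{w}-z))$. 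Reading off the appropriate matrix entry and matching diagonal normalizations --- since $\mathbb{K}_n(\xi,\xi)$ is proportional to $K_n(\xi,\xi)$ with proportionality constant determined by $\Im\lambda = \pi f_\mu(\xi)$ --- forces $\tau_n = f_\mu(\xi)K_n(\xi,\xi)$ as the correct scalar scaling, yielding exactly \eqref{bulkuniversality3aug}.

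The main obstacle is the matrix equivalence itself. One must show that the rescaled matrix family is locally uniformly bounded on $\bbC\times\bbC$, using $K_n(\xi,\xi)\to\infty$ from the determinate moment-problem hypothesis together with transfer-matrix bounds near $\xi$ coming from the nontangential control on $m$; show that every subsequential limit is a reproducing kernel of a de Branges space parametrized by the boundary value $\lambda$; and argue that this parametrization is injective, so that the whole sequence converges. The identification of the limiting reproducing kernel as a (matrix) sinc kernel is forced by de Branges-space theory: the limit must be shift-invariant (reflecting the approximate translation invariance of the problem at scale $1/\tau_n$ near $\xi$), and among shift-invariant de Branges spaces with structure function fixed by $\lambda$, only the Paley--Wiener model survives.
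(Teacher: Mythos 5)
Your overall architecture (matrix CD kernel, de Branges/canonical-system limits, reading off the scalar $(1,1)$ entry) is the right frame, but the proof has a fatal gap at its first substantive step: the claimed ``standard Herglotz upgrade'' from a nontangential limit of $\Im m$ to a nontangential limit of $m$ itself is false. Lindel\"of's theorem requires knowing that $m$ (or its Cayley transform) converges along \emph{some} curve ending at $\xi$; knowing only that $\Im m$ converges gives no such curve and does not ``prevent oscillation of $\Re m$.'' Indeed $\Re m$ is essentially a conjugate function of the density, and there are purely absolutely continuous measures with continuous strictly positive density at $\xi$ for which $\Im m$ has a finite positive nontangential limit while $\Re m(\xi+iy)$ is unbounded as $y\downarrow 0$. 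This is exactly the obstruction the paper emphasizes (``the imaginary part of a Herglotz function has substantially better boundary behavior than the real part''), and it is why Theorem~\ref{thmImaginaryLimitJacobi} cannot be obtained by simply invoking the normal-limit equivalence of Theorems~\ref{theoremJacobiEquivalenceUniform}/\ref{thmuniformnormallimit} with some $\lambda = m(\xi+i0) \in \bbC_+$: that limit need not exist under the stated hypothesis.

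The paper's actual route circumvents this with a \emph{shifted} rescaling: one works with $\tilde m_{\xi,r}(z) = m(\xi+z/r) - \Re m(\xi+i/r)$, which \emph{does} converge in the space of Herglotz functions to the constant $i\pi f_\mu(\xi)$ (this is Lemma~\ref{lemmaCaratheodory}, proved via the Schwarz integral formula, using only the nontangential limit of $\Im m$). The moving real shift acts on the canonical system by an upper-triangular matrix in $\SL(2,\bbR)$ fixing $\binom{1}{0}$, so it leaves the scalar kernel $K_L$ unchanged as a \emph{set} of kernels while destroying the trace normalization; the remaining work (the compactness argument for the set $\cS$ where the diagonal value equals $1$, Lemmas~\ref{lemmacS1}--\ref{lemmacS2}) is precisely the bookkeeping needed to recover the correct parametrization and identify the scale $f_\mu(\xi)K_n(\xi,\xi)$. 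Your proposal contains no substitute for this mechanism. A secondary inaccuracy: the limiting matrix kernel $\mathring\cK_\eta$ for $\eta\in\bbC_+$ is not ``a rank-one matrix times sinc''; it has an off-diagonal $j(\cos(\cdot)-1)$ part and its coefficient matrix $\mathring H_\eta$ is nonsingular for $\eta\in\bbC_+$ (only the scalar entries are pure sinc kernels). That would need correcting even if the first gap were repaired.
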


\begin{remark} \label{firstremark}
\begin{enumerate}[(a)]
\item Lebesgue-almost everywhere on $\bbR$, the limit \eqref{limitImm} exists. The limit $f_\mu(\xi)$ recovers the a.c. part of the measure in the sense that
\begin{equation}\label{LebesgueDecomposition}
d\mu(\xi) = f_\mu(\xi) d\xi + d\mu_s(\xi)
\end{equation}
where $\mu_s$ is a measure singular with respect to Lebesgue measure. In particular, the set
\[
\Sigma_\ac(\mu) = \{ \xi \in\bbR \mid f_\mu(\xi) \text{ exists and }0 < f_\mu(\xi) < \infty\}
\]
is said to be an essential support for the a.c. spectrum.
\item The point $\xi \in \bbR$ is said to be a Lebesgue point of $\mu$ if
\begin{equation}\label{Lebesguemu}
\lim_{\epsilon \downarrow 0} \frac{ \int_{(\xi-\e,\xi+\e)} \lvert f_\mu(s) - f_\mu(\xi) \rvert ds }{2\e} = \lim_{\epsilon \downarrow 0} \frac{ \mu_s((\xi - \e, \xi + \e)) }{2\e} =  0
\end{equation}
(more precisely, $f_\mu$ is usually thought of as an equivalence class of functions in $L^1_\loc(\bbR,d\xi)$, and $\xi$ is said to be a Lebesgue point of $\mu$ if there is a choice of value $f_\mu(\xi)$ such that \eqref{Lebesguemu} holds). At any Lebesgue point of $\mu$, for any $0 < \alpha < \pi/2$, the limit \eqref{limitImm} exists \cite[Theorems 11.22, 11.23]{RudinRealandComplex} and the two pointwise meanings of $f_\mu(\xi)$ (from \eqref{limitImm} or \eqref{Lebesguemu}) match. In particular, Theorem~\ref{thmImaginaryLimitJacobi} applies at every Lebesgue point with $0 < f_\mu(\xi) < \infty$, so it applies at a.e. $\xi$ in the essential support of the a.c. spectrum. 
\item Condition \eqref{limitImm} is unaffected by a change in the measure away from some neighborhood of $\xi$, so it is a local condition on the measure.
\item In terms of the $m$-function, the condition \eqref{limitImm} is optimal; it cannot be replaced by a normal limit. There is a class of examples by Moreno--Martinez--Finkelstein--Sousa \cite{MorenoFinkelSousaConstr}  with a jump in the weight where the normal limit $\lim_{y\downarrow 0} \Im m(\xi + iy)$ exists but rescaled CD kernels converge to a different kernel. 
\end{enumerate}
\end{remark}

 Until now, the strongest results on convergence of rescaled kernels that were proven with local conditions on the measure come from Lubinsky’s work \cite{LubinskyJourdAnalyse12,LubinskyMMJ}, where his conclusions establish convergence in measure or convergence in mean.  It was not previously known whether the bulk universality limit \eqref{bulkuniversality3aug} can be concluded pointwise from only a local condition on the measure, and this is explicitly stated as an open problem in \cite{LubinskyMMJ}. Theorem~\ref{thmImaginaryLimitJacobi} resolves that open problem.  Many of the previous results assume a compactly supported measure; this does not matter for our approach and is another demonstration of its locality.

If the limit \eqref{genuniv} is to exist for some scale $\tau_n$, taking $z=w=0$ shows that the limit of $K_n(\xi,\xi) / \tau_n$ must exist. Conversely, if this limit is finite and nonzero, the locally uniform convergence allows one to change the scale from $K_n(\xi,\xi)$ to $\tau_n$. The growth rate of $K_n(\xi,\xi)$ depends on global properties of the measure. For weights of the form $e^{-Q(\xi)} \,d\xi$ with polynomials $Q$, $K_n(\xi,\xi)$ has power law behavior with exponent dependent on the degree of $Q$ \cite{DeiftOPandRM}. For compactly supported measures, the linear behavior 
\begin{equation} \label{linearscale1}
\lim_{n\to\infty} \frac{ K_n(\xi,\xi)} n = \frac{f_\E(\xi)}{f_\mu(\xi)}
\end{equation}
(where $f_\E$ denotes the density of the equilibrium measure of the essential spectrum $\E = \esssupp \mu$) is heuristically natural, but in order to prove it, some global assumption on $\mu$ is needed. M\'at\'e--Nevai--Totik \cite{MNT} proved \eqref{linearscale1} for Stahl--Totik regular measures \cite{StahlTotik92} (a global condition) under the local assumptions that $f_\mu(\xi) > 0$, $\log f_\mu$ is integrable in a neighborhood of $\xi$, and $\xi$ is a Lebesgue point of both the measure $\mu$ and the function $\log f_\mu$, for the case $\E = [-2,2]$; Totik \cite{TotikJAM} generalized this to arbitrary compacts $\E \subset \bbR$. Of course, Theorem~\ref{thmImaginaryLimitJacobi} can be combined with all such statements.  In fact, it is now  clear that bulk universality at scale $K_n(\xi,\xi)$ and the linear behavior \eqref{linearscale1} are two separate questions which can be combined in their shared scope of applicability.

The approach to universality by localization and smoothing  relies on a combination of Stahl--Totik regularity and local assumptions on the measure at $\xi$. Initially this was proved by Lubinsky~\cite{LubinskyAnnals} for $\xi \notin \supp \mu_s$ with $f_\mu$ continuous, strictly positive at $\xi$; this was generalized in various ways by Simon~\cite{SimonTwoExt08}, Totik~\cite{TotikUniv09}, and Findley~\cite{Findley08} to $\xi$ such that $\xi$ is a Lebesgue point of $\mu$ and $f_\mu$ obeys a local Szeg\H o condition in a neighborhood of $\xi$.  Our Theorem~\ref{thmImaginaryLimitJacobi} shows that regularity or the local Szeg\H o condition are not needed to conclude bulk universality at scale $K_n(\xi,\xi)$. 

Theorem~\ref{thmImaginaryLimitJacobi} generalizes all previous pointwise results on bulk universality \eqref{genuniv} at scale $\tau_n = K_n(\xi,\xi)$.  Bulk universality limits are sometimes formulated with a factor of $n$ inside the kernel and a factor of $K_n(\xi,\xi)$ outside. In most cases this is merely a presentational choice, since the results are only proved to hold within a scope where \eqref{linearscale1} holds; there are also examples  \cite{BSing,Danka17} for which \eqref{linearscale1} isn't proved or expected; such mixed scale bulk universality limits don't fall within the scope of this paper. 

One of the standard applications of universality is to the fine structure of zeros near a point $\xi \in \supp \mu$. We denote by $\xi_j^{(n)}(\xi)$ for $j\in \bbZ$ the zeros of $p_n$ counted from $\xi$, i.e.,
\[
\dots < \xi_{-2}^{(n)}(\xi) <  \xi_{-1}^{(n)}(\xi) < \xi \le  \xi_{0}^{(n)}(\xi) <  \xi_{1}^{(n)}(\xi) <  \dots
\]
with no zeros of $p_n$ between $\xi_j^{(n)}$ and $\xi_{j+1}^{(n)}$. By the Freud--Levin theorem \cite{Freud,LevinLubinsky08} (see also \cite[Theorem~1.2]{AvilaLastSimon}),  the bulk universality limit \eqref{bulkuniversality3aug} implies
\begin{equation}\label{3aug2}
\lim_{n \to \infty} f_\mu(\xi) K_n(\xi,\xi) ( \xi_{j+1}^{(n)}(\xi) - \xi_{j}^{(n)}(\xi) ) = 1\qquad \forall j \in \bbZ.
\end{equation}
(when it occurs at scale $n$, this uniform spacing of zeros is described as clock behavior). Avila--Last--Simon \cite{AvilaLastSimon} proved that \eqref{bulkuniversality3aug} and \eqref{3aug2} hold a.e.\ on $\Sigma_\ac(\mu)$ for measures with ergodic Jacobi parameters; we prove that these phenomena hold a.e.\ on $\Sigma_\ac(\mu)$ for arbitrary measures $\mu$, thereby solving a conjecture of Avila--Last--Simon \cite[p. 83]{AvilaLastSimon}:

\begin{corollary}
For any measure $\mu$ obeying Hypothesis~\ref{hypothesismu},
 \eqref{bulkuniversality3aug} and \eqref{3aug2} hold for Lebesgue-a.e.\ $\xi \in \Sigma_\ac(\mu)$. 
\end{corollary}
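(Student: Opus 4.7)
The plan is to combine Theorem~\ref{thmImaginaryLimitJacobi} with the measure-theoretic facts already recorded in Remark~\ref{firstremark} and with the Freud--Levin theorem cited just above the Corollary. The corollary is essentially a wrap-up, so I do not expect a serious obstacle; the work is in setting up the right Lebesgue-a.e.\ statement.

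First, I would argue that for Lebesgue-a.e.\ $\xi\in\Sigma_\ac(\mu)$, the hypotheses of Theorem~\ref{thmImaginaryLimitJacobi} are satisfied. Recall from Remark~\ref{firstremark}(a) that $\Sigma_\ac(\mu)$ is defined precisely as the set of $\xi$ at which the nontangential limit $f_\mu(\xi)=\pi^{-1}\lim \Im m(z)$ exists and lies in $(0,\infty)$. By the classical Lebesgue differentiation theorem applied to the decomposition \eqref{LebesgueDecomposition}, Lebesgue-a.e.\ $\xi\in\bbR$ is a Lebesgue point of $\mu$ in the sense of \eqref{Lebesguemu}. By Remark~\ref{firstremark}(b), at every Lebesgue point the nontangential limit $f_\mu(\xi)$ exists for every $0<\alpha<\pi/2$ and coincides with the Lebesgue-point value. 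Hence for Lebesgue-a.e.\ $\xi\in\Sigma_\ac(\mu)$, the limit \eqref{limitImm} exists with $0<f_\mu(\xi)<\infty$ for any choice of angular sector.

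Next, for such $\xi$ I apply Theorem~\ref{thmImaginaryLimitJacobi} to conclude the bulk universality limit \eqref{bulkuniversality3aug}, uniformly on compact subsets of $\bbC\times\bbC$. This already gives half the corollary.

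Finally, I would invoke the Freud--Levin theorem \cite{Freud,LevinLubinsky08} (as recalled in the paragraph before the Corollary, cf.\ \cite[Theorem~1.2]{AvilaLastSimon}), which deduces from \eqref{bulkuniversality3aug} the clock-spacing statement \eqref{3aug2} for every $j\in\bbZ$. Since both conclusions hold at the same Lebesgue-a.e.\ set constructed in the first step, the corollary follows. The only conceivable subtlety is that the Freud--Levin theorem requires the locally uniform convergence asserted in \eqref{bulkuniversality3aug}, not merely pointwise convergence in $(z,w)$; this is already built into the conclusion of Theorem~\ref{thmImaginaryLimitJacobi}, so no extra work is needed.
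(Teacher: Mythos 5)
Your proposal is correct and follows exactly the route the paper intends: Remark~\ref{firstremark}(b) reduces the hypothesis of Theorem~\ref{thmImaginaryLimitJacobi} to the Lebesgue-point condition, which holds a.e.\ by the Lebesgue differentiation theorem, and the Freud--Levin theorem then upgrades \eqref{bulkuniversality3aug} to \eqref{3aug2}. No gaps; the uniformity needed for Freud--Levin is indeed supplied by the locally uniform convergence in Theorem~\ref{thmImaginaryLimitJacobi}.
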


Theorem~\ref{thmImaginaryLimitJacobi} can also be formulated uniformly for a set of energies $J \subset \bbR$:
\begin{theorem}\label{thmImaginaryLimitJacobi2}
Let $\mu$ obey Hypothesis~\ref{hypothesismu} and fix a compact set $J \subset\bbR$. Assume that for some $0 < \alpha < \pi/2$, \eqref{limitImm} holds uniformly in $\xi \in J$ with $0 < f_\mu(\xi) < \infty$ for $\xi \in J$. Then uniformly on compact regions of $(\xi,z,w) \in J \times \bbC \times \bbC$, \eqref{bulkuniversality3aug} holds.
\end{theorem}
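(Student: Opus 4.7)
The plan is to bootstrap Theorem~\ref{thmImaginaryLimitJacobi2} from Theorem~\ref{thmImaginaryLimitJacobi} by a compactness-and-contradiction argument that makes each step of the single-point proof uniform in the base point $\xi \in J$. First observe that uniform nontangential convergence in \eqref{limitImm} forces $f_\mu$ to be continuous on $J$: given $\epsilon > 0$, pick $\delta > 0$ from the uniform hypothesis, and for $\xi, \xi' \in J$ sufficiently close choose a single $z$ lying in both truncated Stolz cones at $\xi$ and $\xi'$; two applications of the hypothesis at this shared $z$ give $|f_\mu(\xi) - f_\mu(\xi')| < 2\epsilon$. In particular $f_\mu$ is bounded above and bounded away from zero on $J$.

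Suppose for contradiction that \eqref{bulkuniversality3aug} fails uniformly on $J \times K_1 \times K_2$ for some compacta $K_1, K_2 \subset \bbC$. Then there are $\eta > 0$ and sequences $\xi_k \in J$, $z_k \in K_1$, $w_k \in K_2$, $n_k \nearrow \infty$, along which the left-hand side of \eqref{bulkuniversality3aug} stays at distance $\geq \eta$ from the right-hand side. Passing to subsequences, we may assume $\xi_k \to \xi_\infty \in J$ and $(z_k, w_k) \to (z_\infty, w_\infty)$. Since the right-hand side is continuous in $(z, w)$, it suffices to extract a further subsequence along which the left-hand side converges to the sinc kernel evaluated at $(z_\infty, w_\infty)$.

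For this, one rereads the proof of Theorem~\ref{thmImaginaryLimitJacobi} with moving base point $\xi_k$. As the abstract indicates, the single-point proof proceeds via (a) a normal-family argument for the rescaled $2 \times 2$ matrix CD kernel around the base point, (b) identification of subsequential limits through a limiting Weyl $m$-function, and (c) reduction to the sinc kernel using the nontangential behavior of $\Im m$ at the base point. The main obstacle is ensuring that each of these steps is uniform in $\xi \in J$: step (a) uses only local $L^\infty$-type bounds on the matrix CD kernel, which are joint in $(\xi, n)$ on compact sets; step (b) relies on transfer-matrix convergence whose only $\xi$-dependent input is \eqref{limitImm}, so the uniform hypothesis together with continuity of $f_\mu$ transfers the identification to any $\xi_k \to \xi_\infty$; and step (c) uses the uniform Stolz-cone convergence at $\xi_k$ to pin down the limiting $m$-function to the canonical one with density $f_\mu(\xi_\infty)$, producing the sinc kernel. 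This contradicts $\eta > 0$ and completes the argument.
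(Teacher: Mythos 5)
Your reduction runs in the opposite direction from the paper's --- the paper proves the uniform statement directly (as a corollary of Theorem~\ref{thmImaginaryLimitCanonical}) and obtains the single-point Theorem~\ref{thmImaginaryLimitJacobi} as the special case $J=\{\xi\}$ --- but that by itself would be acceptable if your compactness argument were complete. Your preliminary observation that the uniform hypothesis forces $f_\mu$ to be continuous (hence bounded above and below) on $J$ is correct and genuinely useful. The problem is that the entire substance of the theorem is concentrated in the step you defer to ``rereading the proof of Theorem~\ref{thmImaginaryLimitJacobi} with moving base point $\xi_k$'': proving convergence of the rescaled kernels along a sequence $(\xi_k,n_k)$ with $\xi_k\to\xi_\infty$ is exactly what uniformity on $J$ means, so asserting that steps (a)--(c) ``transfer'' because their inputs are ``joint in $(\xi,n)$'' is a restatement of the goal, not a proof.

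More seriously, your steps (b) and (c) are incorrect as stated. Under a hypothesis on $\Im m$ only, the rescaled functions $m(\xi_k+z/r_k)$ need \emph{not} converge to any ``limiting Weyl $m$-function'': $\Re m(\xi+iy)$ can oscillate or diverge as $y\downarrow 0$ even for purely a.c.\ measures with continuous density, as the paper stresses at the end of the introduction. Subsequential limits of $m(\xi_k+\cdot/r_k)$ in the space of Herglotz functions form, at best, the family $i\pi f_\mu(\xi_\infty)+c$ with $c$ ranging over $\bbR$ (or the constant $\infty$), so there is no single canonical $m$-function to ``pin down.'' The paper's Section~\ref{sectionLimitImm} exists precisely to overcome this: one subtracts $c_{\xi,r}=\Re m(\xi+i/r)$, observes that the corresponding upper-triangular $\SL(2,\bbR)$ gauge action leaves the scalar kernel invariant up to a nonlinear, possibly non-injective reparametrization, and then controls that reparametrization via compactness of the level set $\cS=\{\sigma_{\xi,r}(t)=1\}$ together with Lemma~\ref{lemmalackofpointmass} (which is also what guarantees $K_{n_k}(\xi_k,\xi_k)\to\infty$ uniformly --- another point your sketch takes for granted). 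None of this is supplied by, or replaceable with, the normal-family considerations you describe, so the proposal has a genuine gap at its core.
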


The results presented so far are merely one facet of our approach; that approach naturally takes place in the setting of a matrix version of the Christoffel--Darboux kernel and links universality limits with normal limits of the $m$-function.  To formulate this, we recall that $\mu$ corresponds to a sequence of Jacobi parameters $a_n > 0$, $b_n \in\bbR$, $n =1,2,3, \dots$ such that
\[
z  p_n(z) = a_{n} p_{n-1}(z) + b_{n+1} p_n(z) + a_{n+1} p_{n+1}(z)
\]
with the convention $p_{-1}(z) = 0$. This second order recursion can be written as a first-order system, and iterating that system gives transfer matrices in $\SL(2,\bbC)$, 
\begin{equation}\label{3jul2}
B(n,z) = A(a_n,b_n;z) \cdots A(a_1, b_1;z), \quad A(a,b;z) = \begin{pmatrix}
\frac{z-b}{a}  &  - \frac 1{a} \\
a  & 0
\end{pmatrix}
\end{equation}
whose entries are
\[
B(n,z) = \begin{pmatrix}
p_n(z) & -q_n(z) \\
a_n p_{n-1}(z) & -a_n q_{n-1}(z)
\end{pmatrix}
\]
where $q_n$ denote the second kind polynomials for $\mu$, which are defined by
\[
q_n(z) = \int \frac{p_n(z) - p_n(\xi) }{ z - \xi} d\mu(\xi)
\]
for $n=0,1,2,\dots$ and $q_{-1}(z) = -1$. 
This is the standard transfer matrix formalism for Jacobi recursion \cite[Section 3.2]{SimonSzegoBook}; the same transfer matrices are central to the Riemann--Hilbert approach to orthogonal polynomials \cite[Section 3.2]{DeiftOPandRM}. We define
\begin{equation}\label{eq:CDKernelJForm0}
\cK_n(z, w) =  \begin{pmatrix}
\sum_{j=0}^{n-1} p_j(z) \overline{p_j(w)}  & \sum_{j=0}^{n-1}  q_j(z) \overline{p_j(w)}  \\
\sum_{j=0}^{n-1}  p_j(z) \overline{q_j(w)}  & \sum_{j=0}^{n-1} q_j(z) \overline{q_j(w)} 
\end{pmatrix}
\end{equation}
and call this the matrix Christoffel--Darboux (CD) kernel. Its upper left entry is precisely the CD kernel; moreover, through the other entries, the behavior of the matrix CD kernel also describes the behavior of second kind polynomials and arbitrary eigensolutions of the Jacobi matrix.

Since $m$ is a Herglotz function (analytic map $\bbC_+ \to \bbC_+$), its normal limits can only take values in the closure
\[
\overline{\bbC_+} = \bbC_+ \cup \bbR \cup \{\infty\}.
\]
It is common to study bulk universality in the presence of a.c. spectrum, corresponding to limits $\eta \in \bbC_+$. However, in our results below, the case $\eta \in \bbR \cup \{\infty\}$ is also allowed, and it corresponds to qualitatively different behavior.

Likewise, trace-normalized real symmetric positive $2\times 2$ matrices can be parametrized by a parameter $\eta \in \overline{\bbC_+}$. For $\eta \in \overline{\bbC_+}$, we denote
	\begin{equation}\label{4jul1}
\begin{split}
	\mathring H_\eta  & := \frac 1{1+\lvert \eta \rvert^2} \begin{pmatrix}
	1 & - \Re \eta \\ -\Re\eta & \lvert \eta \rvert^2
	\end{pmatrix} \quad  \eta \in \bbC_+ \cup \bbR \\
	\mathring H_\infty & := \begin{pmatrix}
	0&0\\0&1
	\end{pmatrix}.
\end{split}
\end{equation}
Denote also
\begin{equation}\label{definitionj2aug}
j =  \begin{pmatrix}
	0&-1\\1&0
	\end{pmatrix}
\end{equation}
and define
\begin{align*}
\mathring M_\eta(z)=e^{zj\mathring H_\eta}
\end{align*}
and
\[
\mathring \cK_\eta(z,w) = \int_0^1 e^{-t \overline{w} \mathring H_\eta j} \mathring H_\eta e^{t z j \mathring H_\eta}\,dt.
\]
Note that this kernel can be computed much more explicitly. If we denote
\[
h_\eta := \frac{\Im \eta}{1+|\eta|^2}\geq 0\text{ for }\eta \in\bbC_+, \quad h_\eta = 0\text{ for }\eta \in \bbR \cup\{\infty\}
\]
then we have
\[
\mathring \cK_\eta(z,w)= \frac{j(\cos(h_\eta(\overline{w}-z))-1)+\frac{ \mathring H_\eta}{h_\eta}\sin(h_\eta(\overline{w}-z))}{\overline{w}-z}, \qquad \text{for }\eta \in \bbC_+,
\]
\[
\mathring \cK_\eta(z,w) = \mathring H_\eta, \qquad \text{for }\eta \in \bbR \cup \{\infty\}.
\]
The quantities $\mathring M_\eta, \mathring \cK_\eta$ will appear in our results as universality limits; they emerge in the proofs as reproducing kernels of canonical systems (see Section 2) with constant trace-normalized Hamiltonians equal to $\mathring H_\eta$, evaluated at unit length.

In particular, in some approaches to universality the sinc kernel seems to appear through its Fourier transform; in our approach it appears as an eigensolution of a constant coefficient canonical system, normalized by an initial condition. Note that  if $\eta=i$, that is $ \mathring H_\eta = \frac{1}{2}I_2$, we obtain the well known Paley-Wiener (namely sinc) kernel. Moreover, note that the $\cos$ part only appears on the off diagonal and thus does not appear in the scalar reproducing kernels.

We show that normal limits of the $m$-function at $\xi \in \bbR$ imply a universality limit for the matrix Christoffel--Darboux kernel $\cK_n$, with scale
\[
\tau_\xi(n) = \tr \cK_n(\xi,\xi).
\]
From now on, we will formulate all results in a version uniform over a compact set $J \subset \bbR$; a singleton set $J = \{\xi\}$ is of course allowed.

\begin{theorem} \label{thmlimitmatrixCDkernel2}
Let $\mu$ obey Hypothesis~\ref{hypothesismu} and fix a compact set $J  \subset \bbR$. If $m$ has a normal limit uniformly on $J$, i.e., there exists $\eta : J \to \overline{\bbC_+}$ such that
\begin{equation}\label{uniformnormallimit}
\lim_{y \downarrow 0}   m(\xi + i y) = \eta(\xi)
\end{equation}
uniformly in $\xi \in J$, then
\begin{equation}\label{bulkuniversality3augmatrix}
\lim_{n\to\infty} \frac{1}{\tau_\xi(n)}\cK_n\left( \xi + \frac z{\tau_\xi(n)}, \xi + \frac w{\tau_\xi(n)} \right) =  \mathring \cK_{\eta(\xi)}(z,w),
\end{equation}
uniformly on  compact regions of $(\xi,z,w) \in J \times \bbC \times \bbC$.
\end{theorem}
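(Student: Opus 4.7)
My strategy is to pass from the discrete Jacobi setting to the continuous framework of canonical systems, where the matrix CD kernel appears as a reproducing kernel and the claim reduces to a continuity statement for the inverse spectral map.

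\emph{Step 1: Canonical-system embedding.} I would first build a piecewise-constant Hamiltonian $H \colon [0, \infty) \to \Mat_2(\bbR)$, $H(t) \geq 0$, for which the canonical system
\[
j\, \partial_t M(t, z) = z\, H(t)\, M(t, z), \qquad M(0, z) = I,
\]
has $M(n, z) = B(n, z)$ at integer $t$ (up to a $z$-independent gauge). Under this classical Krein correspondence, the Weyl $m$-function of the canonical system coincides with $m(z)$, and the matrix CD kernel is the reproducing kernel of the system on $[0, n]$,
\[
\cK_n(z, w) = \int_0^n M(t, w)^*\, H(t)\, M(t, z)\, dt
\]
(possibly after a fixed transposition dictated by \eqref{eq:CDKernelJForm0}). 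In particular, $\tau_\xi(n) = \int_0^n \tr\bigl(M(t, \xi)^* H(t) M(t, \xi)\bigr)\, dt$.

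\emph{Step 2: Shift and rescale.} For $\xi \in J$, conjugate by $M(\cdot, \xi)$, which for real $\xi$ is $\SL(2, \bbR)$-valued and preserves $j$. Setting $\tilde H^\xi(t) := M(t, \xi)^* H(t) M(t, \xi) \geq 0$, the canonical system with Hamiltonian $\tilde H^\xi$ has transfer matrix $N(t, z) = M(t, \xi)^{-1} M(t, \xi + z)$ and $m$-function $z \mapsto m(\xi + z)$. Rescaling time $t = n s$ and energy $z \mapsto z / \tau_\xi(n)$ defines
\[
\hat H^{(n, \xi)}(s) := \frac{n}{\tau_\xi(n)}\, \tilde H^\xi(n s), \qquad s \in [0, 1],
\]
which by Step 1 is trace-normalized: $\int_0^1 \tr \hat H^{(n, \xi)}(s)\, ds = 1$. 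The corresponding fundamental solution $\hat M^{(n, \xi)}(1, z)$ and its reproducing kernel on $[0, 1]$ are, respectively, the normalized transfer matrix and precisely the left-hand side of \eqref{bulkuniversality3augmatrix}.

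\emph{Step 3: Main technical step---convergence of Hamiltonians.} The heart of the proof is to show that, uniformly in $\xi \in J$,
\[
\hat H^{(n, \xi)} \longrightarrow \mathring H_{\eta(\xi)} \qquad (n \to \infty)
\]
in a topology on $\Mat_2$-valued measures strong enough to pass through to solutions of the ODE in Step 4. By construction, the Weyl $m$-function of the half-line canonical system with Hamiltonian $\hat H^{(n, \xi)}$ is a fixed fractional-linear image of $m(\xi + z/\tau_\xi(n))$, which by \eqref{uniformnormallimit} tends to the constant $\eta(\xi)$ at each $z \in \bbC_+$, uniformly in $\xi \in J$; Vitali's theorem upgrades this to locally uniform convergence on $\bbC_+$ (using the uniform nontangential bound from compactness of $J$ to produce a normal family). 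Since the constant Herglotz function $\eta(\xi)$ is the $m$-function of the constant Hamiltonian $\mathring H_{\eta(\xi)}$, the Krein correspondence plus the trace-normalization (a compactness ingredient) then yields the claimed Hamiltonian convergence.

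\emph{Step 4: Conclusion.} Continuous dependence of ODE solutions on a weakly convergent, uniformly trace-bounded coefficient yields
\[
\hat M^{(n, \xi)}(s, z) \longrightarrow e^{s z\, j\, \mathring H_{\eta(\xi)}} = \mathring M_{\eta(\xi)}(s z)
\]
uniformly for $(\xi, s, z) \in J \times [0, 1] \times K$, $K \Subset \bbC$. Integrating in $s$ over $[0, 1]$ gives the desired convergence to $\mathring \cK_{\eta(\xi)}(z, w)$, uniformly on compact regions of $J \times \bbC \times \bbC$.

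\emph{Main obstacle.} Step 3 is the crux. Converting a normal-limit hypothesis on a Herglotz function into a convergence statement for trace-normalized Hamiltonians is a genuine piece of inverse spectral theory. The case $\eta(\xi) \in \bbC_+$ is the most comfortable, since the limit Hamiltonian is nondegenerate; the boundary cases $\eta(\xi) \in \bbR \cup \{\infty\}$ force $\mathring H_{\eta(\xi)}$ to be rank one and are correspondingly more delicate, requiring a careful treatment of singular limits. Achieving the convergence uniformly in $\xi \in J$ and in a topology strong enough to feed into the ODE of Step 4 is where the principal technical work lies.
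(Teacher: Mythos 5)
Your overall architecture --- embed the Jacobi recursion into a canonical system, shift the reference point to $\xi$, rescale, and use the de~Branges/Krein correspondence between $m$-functions and trace-normalized Hamiltonians together with compactness to pass to the constant-Hamiltonian limit --- is the paper's strategy, and your use of Vitali's theorem to upgrade the normal limit of $m$ to locally uniform convergence of $m(\xi+\cdot/\tau_\xi(n))$ on $\bbC_+$ is a legitimate substitute for the paper's corresponding lemma. However, Step~3, which you correctly identify as the crux, contains a genuine error as written. You rescale time linearly, $t=ns$, and claim $\hat H^{(n,\xi)}\to\mathring H_{\eta(\xi)}$ on $[0,1]$. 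Taking traces and integrating, $\int_0^S\tr\hat H^{(n,\xi)}(s)\,ds=\tau_\xi(nS)/\tau_\xi(n)$, so your claim forces $\tau_\xi(nS)/\tau_\xi(n)\to S$, i.e.\ asymptotic linearity of $L\mapsto\tau_\xi(L)$ --- a \emph{global} property of $\mu$ that does not follow from the local hypothesis \eqref{uniformnormallimit}. For the Hermite weight, $K_n(\xi,\xi)\asymp n^{1/2}$, so $\tau_\xi(nS)/\tau_\xi(n)\to S^{1/2}$ and the antiderivatives of $\hat H^{(n,\xi)}$ converge to $S^{1/2}\mathring H_{\eta(\xi)}$, a nonconstant reparametrization of the constant Hamiltonian; if $\tau_\xi(L)\sim\log L$ the trace mass even concentrates at $s=0$. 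Consequently the intermediate convergence $\hat M^{(n,\xi)}(s,z)\to\mathring M_{\eta(\xi)}(sz)$ for $s<1$ in Step~4 also fails in general. The de~Branges bijection (and its continuity) holds only for \emph{pointwise} trace-normalized Hamiltonians; your $\hat H^{(n,\xi)}$ are only normalized in the integral sense, so convergence of their $m$-functions determines their limits only up to a time reparametrization, which your argument ignores.

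The reason your final formula is nonetheless correct is that the endpoint $s=1$ is reparametrization-invariant: every subsequential limit of $\hat H^{(n,\xi)}$ is of the form $a'(s)\,\mathring H_{\eta(\xi)}$ with $a$ increasing and $a(1)=1$, and the solution and kernel at $s=1$ depend only on $a(1)$. The clean fix --- and what the paper does --- is to reparametrize by the trace \emph{before} rescaling: pass to the pointwise trace-normalized Hamiltonian $H_{\xi,1}$ in the parameter $t=\tau_\xi(L)$ (Lemma~\ref{lem:Reparam}), and then rescale by $H_{\xi,r}(t)=H_{\xi,1}(rt)$ with $r=\tau_\xi(n)$. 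This rescaled family is again pointwise trace-normalized, lives in the compact space $\fH$, and is matched to the spectral rescaling $m(\xi+z/r)$ by the de~Branges homeomorphism, so convergence of the $m$-functions now yields convergence of the Hamiltonians themselves and the ODE-continuity step goes through. This is also precisely why the universality scale in the theorem is $\tau_\xi(n)$ rather than $n$.
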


Since we allow $\eta =\infty$,  \eqref{uniformnormallimit} should be understood with respect to the chordal metric on the Riemann sphere $\hat\bbC$.

\begin{remark}
\begin{enumerate}[(a)]
\item Evaluating \eqref{bulkuniversality3augmatrix} at $z=w=0$ shows
\begin{equation}\label{xixi}
\lim_{n\to\infty} \frac 1{\tau_\xi(n)} \cK_n(\xi,\xi) = \mathring \cK_{\eta(\xi)} (0,0) = \mathring H_{\eta(\xi)}.
\end{equation}
In particular, passing to the $(1,1)$-entry gives
\begin{equation}\label{eqn:scalechange}
\lim_{n\to\infty} \frac{K_n(\xi,\xi)}{\tau_\xi(n)} = \frac{1}{1+ \lvert \eta(\xi) \rvert^2}.
\end{equation}
Thus,   when $\eta(\xi) \neq \infty$, it is trivial to pass from the scaling by $\tau_\xi(n)$ to the scaling by $K_n(\xi,\xi)$. 
\item \eqref{eqn:scalechange} recovers a central result of subordinacy theory \cite{GilbertPearson,JitomirskayaLast}: written in terms of polynomials, it means
\[
\frac{ \sum_{j=0}^{n-1} p_j(\xi)^2 }{ \sum_{j=0}^{n-1} (p_j(\xi)^2 + q_j(\xi)^2 )} \to \frac{1}{1+ \lvert \eta(\xi) \rvert^2},
\]
so the eigensolution $p_n(\xi)$ is subordinate if  $\eta(\xi) = \infty$  and not subordinate if $\eta(\xi)  \in \bbC_+ \cup \bbR$. Similar considerations starting from \eqref{xixi}  show that a subordinate solution exists at energy $\xi$ if and only if $\eta(\xi) \in \bbR \cup \{\infty\}$.
\end{enumerate}
\end{remark}

Theorem~\ref{thmlimitmatrixCDkernel2} can even be turned into an equivalence statement, if the kernels $\cK_n$ are embedded in a continuous family of kernels by a linear interpolation. We define for $L \ge 0$
\begin{equation}\label{linearinterpolationkernel}
\cK_L(z,w) = \cK_{\lfloor L \rfloor} (z,w) + (L - \lfloor L \rfloor) ( \cK_{\lfloor L \rfloor+1} (z,w) - \cK_{\lfloor L \rfloor} (z,w))
\end{equation}
The resulting kernels interpolate between projections to subspaces $\spann\{1,\dots,z^{n-1}\}$. 
We also point out for $\xi \in \bbR$ the objects
\[
M_L(z,\xi) = I + (z-\xi) j \cK_L(z,\xi),
\]
because evaluated at integers, they give
\begin{equation}\label{variationofparametersfromkernel}
j_1 M_n(z,\xi) j_1 = B(n, \xi)^{-1} B(n,z), \qquad j_1 = \begin{pmatrix} -1 & 0 \\ 0 & 1 \end{pmatrix}.
\end{equation}
Thus, the matrices $M_L$ serve as a linear interpolation for the matrices $M_n$, which were studied before in studies of universality. In spectral theory the linear interpolation \eqref{linearinterpolationkernel} has been used for $\cK_L(\xi,\xi)$ in subordinacy theory for Jacobi matrices \cite{GilbertPearson,JitomirskayaLast}.  

For the continuous family of kernels $\cK_L$, we prove that universality at the scale
\begin{equation}\label{universalityscale}
\tau_\xi(L) = \tr \cK_L(\xi,\xi)
\end{equation}
is equivalent to a local condition: the existence of a normal limit of $m$ at $\xi$. We also point out some other equivalent criteria.

\begin{theorem} \label{theoremJacobiEquivalenceUniform}
Let $\mu$ obey Hypothesis~\ref{hypothesismu} and fix a compact $J \subset \bbR$.
The following are equivalent:
	\begin{enumerate}[(i)]
		\item  
		$m$ has a normal limit uniformly on $J$, i.e.,  \eqref{uniformnormallimit} holds for some $\eta: J \to \overline{\bbC_+}$.
		\item  
		For some  $H: J \to \Mat(2,\bbC)$, uniformly in $\xi \in J$,
		\[
		\lim_{n\to\infty} \frac{1}{\tau_\xi(n)} \cK_n(\xi,\xi) =  H(\xi).  
		\]
		\item  
		For some $M: J \times \bbC \to \SL(2,\bbC)$, uniformly on compact subsets of $(\xi,z)\in J \times \bbC$,
		\[
		\lim_{L\to\infty} M_L(\xi + z / \tau_\xi(L) , \xi) = M(\xi,z). 
		\]
		\item 
		For some $\cK:J \times \bbC \times \bbC \to \Mat(2,\bbC)$, uniformly on compact subsets of $(\xi,z,w) \in J \times \bbC \times \bbC$,
		\[
		\lim_{L \to\infty} \frac{1}{\tau_\xi(L)}\cK_L(\xi + z/\tau_\xi(L), \xi + w/\tau_\xi(L)) = \cK(\xi,z,w).		\] 
	\end{enumerate}
	Moreover, in this case, $H(\xi) = \mathring H_{\eta(\xi)}$, $M(\xi,z) = \mathring M_{\eta(\xi)}(z)$, and $\cK(\xi,z,w) = \mathring \cK_{\eta(\xi)}(z,w)$.
\end{theorem}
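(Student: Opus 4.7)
The plan is to establish the cycle $(i) \Rightarrow (iv) \Rightarrow (iii) \Rightarrow (ii) \Rightarrow (i)$, identifying along the way $\cK(\xi,z,w) = \mathring \cK_{\eta(\xi)}(z,w)$, $M(\xi,z) = \mathring M_{\eta(\xi)}(z)$ and $H(\xi) = \mathring H_{\eta(\xi)}$. The forward implication $(i) \Rightarrow (iv)$, with the identification of the limit kernel, is precisely the content of Theorem~\ref{thmlimitmatrixCDkernel2}.

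For the two middle implications I use the algebraic identity $M_L(z,\xi) = I + (z-\xi)\, j\, \cK_L(z,\xi)$ recorded just before the theorem. For $(iv) \Rightarrow (iii)$, substitute $z \mapsto \xi + z/\tau_\xi(L)$; the hypothesis (iv) with $w=0$ yields, locally uniformly in $z$,
\[
M_L(\xi + z/\tau_\xi(L),\xi) = I + z\, j\,\tau_\xi(L)^{-1}\cK_L(\xi+z/\tau_\xi(L),\xi) \longrightarrow I + z\, j\,\mathring\cK_{\eta(\xi)}(z,0),
\]
and a short computation --- using that $j\mathring H_\eta$ has trace $0$ and determinant $h_\eta^2$, so $(j\mathring H_\eta)^2 = -h_\eta^2 I$ and hence $e^{zj\mathring H_\eta} = \cos(h_\eta z)\, I + (\sin(h_\eta z)/h_\eta)\, j\mathring H_\eta$ --- identifies this limit with $\mathring M_{\eta(\xi)}(z)$. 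For $(iii) \Rightarrow (ii)$, differentiate the same substituted identity at $z=0$: setting $F_L(z) := M_L(\xi+z/\tau_\xi(L),\xi)$ one gets $\partial_z F_L(0) = \tau_\xi(L)^{-1}\, j\, \cK_L(\xi,\xi)$, and locally uniform convergence of the holomorphic family $F_L$ passes to derivatives via Cauchy's estimate. Since $\partial_z \mathring M_{\eta(\xi)}(0) = j\,\mathring H_{\eta(\xi)}$, we conclude $\tau_\xi(n)^{-1}\cK_n(\xi,\xi) \to \mathring H_{\eta(\xi)}$.

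The main obstacle is $(ii) \Rightarrow (i)$. As a preliminary observation, every $\cK_n(\xi,\xi)/\tau_\xi(n)$ is a real symmetric positive semidefinite matrix of trace $1$, and the map $\eta \mapsto \mathring H_\eta$ is a homeomorphism of $\overline{\bbC_+}$ onto this set; so (ii) furnishes a (continuous) $\eta : J \to \overline{\bbC_+}$ with $H(\xi) = \mathring H_{\eta(\xi)}$. The substantive task is to prove $\lim_{y\downarrow 0} m(\xi+iy) = \eta(\xi)$ uniformly in $\xi \in J$. The natural route is through the canonical systems framework of Section~2: the Jacobi matrix embeds (via the identification \eqref{variationofparametersfromkernel}) into a canonical system whose transfer matrix at integer times is $M_L$ and whose $m$-function is the Jacobi $m$-function. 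Hypothesis (ii) amounts to the statement that, on scale $\tau_\xi(L)$, the Hamiltonian of this canonical system is asymptotically constant and equal to $\mathring H_{\eta(\xi)}$. Combining this with the baseline correspondence that a canonical system with constant Hamiltonian $\mathring H_\eta$ has $m$-function constantly equal to $\eta$ then produces the desired normal limit.

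The hardest point in $(ii) \Rightarrow (i)$ is the stability of this correspondence in the limit-point case: converting trace-normalized diagonal kernel information at the real point $\xi$ into control of $m(\xi + iy)$ at nearby complex energies. This is handled by extending the convergence in (ii) from $\xi \in J$ to a holomorphic regime: one uses the holomorphy of $\cK_L(\cdot,\cdot)/\tau_\xi(L)$ together with the Herglotz representation of $m$, combined with an equicontinuity argument (to deduce uniformity on $J$), to track the Weyl disk $D_n(\xi+iy)$ --- whose parameters are rational in the entries of the transfer matrix $B(n,\xi+iy)$ and of $\cK_n(\xi,\xi)$ --- and to show that as $n\to\infty$ with $y$ coupled appropriately, $D_n(\xi+iy)$ shrinks toward $\eta(\xi)$.
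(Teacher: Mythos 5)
Your middle implications (iv)$\Rightarrow$(iii)$\Rightarrow$(ii), via the identity $M_L(z,\xi)=I+(z-\xi)j\cK_L(z,\xi)$ and Cauchy estimates, are correct and unproblematic. Both ends of your cycle, however, have genuine gaps. For (i)$\Rightarrow$(iv) you invoke Theorem~\ref{thmlimitmatrixCDkernel2}; in the paper that theorem is itself \emph{deduced} from the implication (i)$\Rightarrow$(iv) of the present theorem, so as written your argument is circular, and even granting it independently it only yields convergence along integers $n$, not along the continuum $L$ demanded by (iv).

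The more serious gap is (ii)$\Rightarrow$(i), which you rightly call the crux but leave as a sketch. The paper's mechanism is concrete: after the gauge transformation and trace reparametrization, (ii) says exactly that the Ces\`aro averages $\frac1\tau\int_0^\tau H_{\xi,1}(t)\,dt$ converge to $\mathring H_{\eta(\xi)}$ uniformly in $\xi$; by the rescaling identity $H_{\xi,r}(t)=H_{\xi,1}(rt)$ this is equivalent to convergence of the rescaled trace-normalized Hamiltonians $H_{\xi,r}\to\mathring H_{\eta(\xi)}$ in $\fH$ (compactness of $\fH$ is what upgrades convergence of the $T=1$ antiderivative to convergence in the full topology), and the continuity of the de Branges correspondence (Theorem~\ref{thm:Contin}) then converts this into $m(\xi+\cdot/r)\to\eta(\xi)$ in $\fM$, hence the uniform normal limit. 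Your proposal replaces this with a plan to ``track the Weyl disk $D_n(\xi+iy)$ \dots\ as $n\to\infty$ with $y$ coupled appropriately,'' which is not carried out and is not obviously workable: the diagonal data $\cK_n(\xi,\xi)$ at the single real point $\xi$ does not control the transfer matrices at complex energies without precisely the compactness-plus-homeomorphism argument above (or some substitute for it). Note also that (ii) does \emph{not} say the Hamiltonian is asymptotically constant; only its rescaled antiderivatives converge, which is strictly weaker and is exactly why the topology on $\fH$ (locally uniform convergence of antiderivatives) is the right one. Finally, the paper's actual proof of this theorem is a two-line reduction to Theorem~\ref{thmuniformnormallimit} plus a convex-combination argument showing that convergence of $\cK_n(\xi,\xi)/\tau_\xi(n)$ over integers implies convergence of $\cK_L(\xi,\xi)/\tau_\xi(L)$ over all $L$; that last interpolation step is also absent from your write-up.
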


The equivalence of (ii) and (iv) in Theorem~\ref{theoremJacobiEquivalenceUniform} can be compared with a second approach of Lubinsky \cite{LubinskyJourdAnalyse08,LubinskyJFA09}. Lubinsky proves that for a measure $\mu$ with $\xi \notin \supp\mu_s$ and $C^{-1} \le f_\mu \le C$ in a neighborhood of $\xi$, a bulk universality limit holds for all $z,w$ if and only if it holds on the diagonal $z=w$. Our equivalence of (ii) and (iv) shows \emph{with no prior assumptions} that the matrix bulk universality limit holds for all $z,w$ if and only if it holds at the center $z=w=0$.

Breuer, Last and Simon \cite{BreuerLastSimon} showed that under the assumption that $\xi$ is a Lebesgue point and $m$ has a finite normal limit at $\xi$, bulk universality at scale $n$ of the CD kernel implies bulk universality at scale $n$ of the matrix CD kernel. Recently, Breuer \cite{Breuer2021ConstrApprox} showed that under the same assumptions, bulk universality with scale $n$ is equivalent to convergence of $B(n,\xi)^{-1} B(n,\xi+ z/n)$. These results follow effortlessly from Theorem~\ref{theoremJacobiEquivalenceUniform};  namely, a finite normal limit for $m$ already gives a bulk universality limit at scale $\tau_\xi(n)$, and it implies by \eqref{eqn:scalechange} that the scales $K_n(\xi,\xi)$ and $\tau_\xi(n)$ are proportional. Thus, one of those scales is proportional to $n$ if and only if the other is.

In fact, our results naturally take place in a more general setting, with transfer matrices considered as the basic object. This will explain why our approach does not need compact support of the measure. Moreover, this approach is to some extent model-independent and applies to other self-adjoint settings with $2\times 2$ transfer matrix formalisms such as Schr\"odinger and Dirac operators; we note however that it does not automatically apply to orthogonal polynomials on the unit circle, due to their unitary nature.

To state this general setting, we redefine all the basic objects; we will do so while reusing notation, and in Section 2 we will explain the compatibility of the definitions above and below.

\begin{hypothesis}\label{Txzcanonicalsystem}
Consider matrix-valued functions $A, B : [0,\infty) \to \Mat(2,\bbC)$ which are locally integrable in the sense that their entries are in $L^1([0,x])$ for all $x  < \infty$, and have the property that $A(x) \ge 0$, $B(x)^* = B(x)$, and $\tr(A(x) j) = \tr(B(x) j)= 0$ for Lebesgue-a.e.\ $x$, with $j$ defined by \eqref{definitionj2aug}. Let  $T: [0,\infty) \times \bbC \to \Mat(2,\bbC)$ be the solution of the initial value problem
\begin{equation} \label{canonicalsystemAB}
j\partial_xT(x,z)=(-z A(x)+B(x))T(x,z),\quad T(0,z)=I_2
\end{equation}
and  assume that (limit point case)
\begin{equation}\label{limitpointcondition}
\tr \int_0^\infty T(x,0)^* A(x) T(x,0) dx = \infty.
\end{equation}
\end{hypothesis}
The condition $\tr(A(x) j) = \tr(B(x) j)= 0$ could equivalently be stated as $A, B : [0,\infty) \to \Mat(2,\bbR)$. We chose the former formulation, since it is uniform for different signature matrices $j$.

Initial value problems of the form \eqref{canonicalsystemAB} are called canonical systems or Hamiltonian systems \cite{BLY1,KrallBook}.  The solutions $T(x,z)$ are jointly continuous,  entire in $z$ for each $x$, and $\det T(x,z) = 1$ for all $x, z$.  We define the corresponding kernels  by
\begin{equation}\label{kernelfromTdefn}  
\cK_L(z,w) = \int_0^L T(x,w)^* A(x) T(x,z) \,dx.
\end{equation}
Moreover, the solutions $T(x,z)$ have the $j$-monotonic property in the upper half-plane
\begin{equation}\label{eq:jMonotonic}
\frac{T(x_2,z)^*jT(x_2,z)-T(x_1,z)^*jT(x_1,z)}{i}\leq 0, \qquad z \in \bbC_+, \quad 0 \le x_1 \le x_2
\end{equation}
and the $j$-unitary property on the real line,
\begin{equation}\label{eq:jUnitary}
\frac{T(x,z)^*jT(x,z)-j}{i}= 0, \qquad z \in \bbR, \quad x \ge 0.
\end{equation}
The condition \eqref{eq:jUnitary} is equivalent to  $T(x,z) \in \SL(2,\bbR)$ for $z\in \bbR$.

Due to \eqref{eq:jMonotonic}, Weyl disks can be introduced in this setting. By a standard abuse of notation, we will use the same notation for an $\SL(2,\bbC)$ matrix and for the M\"obius transformation it generates on the Riemann sphere $\hat\bbC$, with the standard projective identification of $w\in\bbC$ with the coset of $\binom w1$ and $\infty$ with the coset of $\binom 10$. We denote $\overline{\bbC_+} = \bbC_+ \cup \bbR \cup \{\infty\}$. With this notation in mind, for any $z\in \bbC_+$, the Weyl disks are defined by
\[
D(x,z) = \{ w \in \hat\bbC \mid T(x,z) w \in \overline{\bbC_+} \}
\] 
Due to \eqref{eq:jMonotonic}, the Weyl disks are nested, $D(x_2, z) \subset D(x_1, z)$ for $x_1 \le x_2$. Thus, for each $z\in \bbC_+$, the intersection $\cap_x D(x,z)$ is a disk or a point. It is a point if and only if \eqref{limitpointcondition} holds, and in this case, the Weyl disks define an analytic map $m: \bbC_+ \to \overline{\bbC_+}$ by
\begin{equation}\label{WeylIntersection}
\{ m(z) \} = \bigcap_{x \ge 0} D(x,z).
\end{equation}

\begin{theorem} \label{thmuniformnormallimit}
Let $T(x,z)$ be as in Hypothesis~\ref{Txzcanonicalsystem}. Let $m$ be the corresponding $m$-function \eqref{WeylIntersection}, $\cK_L$ the corresponding kernels \eqref{kernelfromTdefn}, and $\tau_\xi(L)$ the scaling factors \eqref{universalityscale}. The following are equivalent:
	\begin{enumerate}[(i)]
		\item  
		$m$ has a normal limit uniformly on $J$, i.e.,  \eqref{uniformnormallimit} holds for some $\eta: J \to \overline{\bbC_+}$.
		\item 
		For some  $H: J \to \Mat(2,\bbC)$, uniformly in $\xi \in J$,
		\[
\lim_{L\to\infty}		\frac{1}{\tau_\xi(L)} \cK_L(\xi,\xi)  =  H(\xi). 
		\]
		\item 
		For some $M: J \times \bbC \to \SL(2,\bbC)$, uniformly on compact subsets of $(\xi,z)\in J \times \bbC$,
		\[
		\lim_{L\to\infty}  T(L,\xi)^{-1} T(L,\xi + z / \tau_\xi(L)). 
		= M(\xi,z) 
		\]
		\item 
		For some $\cK:J \times \bbC \times \bbC \to \Mat(2,\bbC)$, uniformly on compact subsets of $(\xi,z,w) \in J \times \bbC \times \bbC$,
		\[
		\lim_{L\to\infty} \frac{1}{\tau_\xi(L)}\cK_L(\xi + z/\tau_\xi(L), \xi + w/\tau_\xi(L)) = \cK(\xi,z,w).
		\] 
	\end{enumerate}
	Moreover, in this case, $H(\xi) = \mathring H_{\eta(\xi)}$, $M(\xi,z) = \mathring M_{\eta(\xi)}(z)$, and $\cK(\xi,z,w) = \mathring \cK_{\eta(\xi)}(z,w)$.
\end{theorem}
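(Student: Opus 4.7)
The proof rests on a matrix Christoffel--Darboux identity obtained from \eqref{kernelfromTdefn} by differentiating in $x$: using $\partial_x T(x,z) = (zjA - jB)T$ together with the symmetries $A^* = A$, $B^* = B$, $j^* = -j$, a direct computation yields
\[
(\bar w - z)\,\cK_L(z,w) = T(L,w)^* j\, T(L,z) - j.
\]
For $\xi \in \bbR$, $T(L,\xi)\in\SL(2,\bbR)$ is $j$-unitary, so setting $U_L(z) := T(L,\xi)^{-1} T(L,\xi + z/\tau_\xi(L))$ converts the identity into
\[
\frac{1}{\tau_\xi(L)}\cK_L\!\left(\xi + \tfrac{z}{\tau_\xi(L)},\,\xi + \tfrac{w}{\tau_\xi(L)}\right) = \frac{U_L(w)^* j U_L(z) - j}{\bar w - z}.
\]
From this, (iii)$\Leftrightarrow$(iv) and (iv)$\Rightarrow$(ii) (evaluate at $z = w = 0$) are immediate; substituting $w = 0$ gives $U_L(z) = I - zj\,\tfrac{1}{\tau_\xi(L)}\cK_L(\xi + z/\tau_\xi(L),\xi)$, so (iv)$\Rightarrow$(iii) as well. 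The explicit forms $\mathring H_\eta$, $\mathring M_\eta$, $\mathring\cK_\eta$ emerge from the observation that $\binom{\eta}{1}$ is an eigenvector of $j\mathring H_\eta$.

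I would next exhibit $\{U_L\}$ as a normal family. Pulling back the canonical system by $T(\cdot,\xi)$ and reparametrizing by trace-arclength exhibits $U_L(z)$ as the unit-length transfer matrix of $\partial_t U = zj\hat A_L(t) U$ on $t\in[0,1]$, where the trace-normalized Hamiltonian $\hat A_L$ satisfies $\|\hat A_L(t)\|\le \tr \hat A_L(t) = 1$. The Dyson series then gives the uniform bound $\|U_L(z)\|\le e^{|z|}$, so $\{U_L\}$ is a normal family on $\bbC$. Any subsequential limit $M(\xi,\cdot)$ is entire and $\SL(2,\bbC)$-valued with $M(\xi,0) = I$, and by weak-$*$ compactness of $\hat A_L$ arises as the unit-length transfer matrix of some limit trace-normalized canonical system on $[0,1]$.

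The heart of the theorem is the linkage with $m$. Using the preservation of $\overline{\bbC_+}$ by $T(L,\xi)$, the Weyl-disk characterization $m(z)\in T(L,z)^{-1}(\overline{\bbC_+})$ translates into
\[
m(\xi + z/\tau_\xi(L)) \in U_L(z)^{-1}(\overline{\bbC_+}), \qquad z\in\bbC_+.
\]
Under (i), standard Herglotz theory upgrades the normal limit to non-tangential convergence, so $m(\xi+z/\tau_\xi(L))\to\eta(\xi)$ uniformly on compact subsets of $\bbC_+$. Hence along any subsequential limit $U_{L_k}\to M$, one obtains $\eta(\xi)\in M(z)^{-1}(\overline{\bbC_+})$ for every $z\in\bbC_+$; combined with the canonical-systems origin of $M$ and de~Branges--Krein inverse-spectral uniqueness for trace-normalized canonical systems, this forces the limit Hamiltonian to be the constant $\mathring H_{\eta(\xi)}$ and hence $M = \mathring M_{\eta(\xi)}$, yielding (i)$\Rightarrow$(iii). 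Conversely, (iii)$\Rightarrow$(i) follows because $\binom{\eta(\xi)}{1}$ is the contracting eigendirection of $j\mathring H_{\eta(\xi)}$ at large $y$, so the disks $\mathring M_{\eta(\xi)}(iy)^{-1}(\overline{\bbC_+})$ shrink to $\{\eta(\xi)\}$ as $y\to\infty$, and a diagonal extraction compatible with the uniform convergence of $U_L$ delivers $m(\xi+i\epsilon)\to\eta(\xi)$ uniformly in $\xi\in J$. Finally, (ii)$\Rightarrow$(i) is supplied by the subordinacy equivalence for canonical systems (the analog of Jitomirskaya--Last), closing the cycle.

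The main obstacle is the rigidity step identifying $M$ as $\mathring M_{\eta(\xi)}$: the pointwise constraint ``$M(z)\eta(\xi)\in\overline{\bbC_+}$ for all $z\in\bbC_+$'' alone would not suffice for an arbitrary entire $\SL(2,\bbC)$-valued function, but combined with the structural fact that $M$ is itself the transfer matrix of a trace-normalized canonical system and with the de~Branges--Krein inverse spectral theorem, the identification is forced. Uniformity in $\xi\in J$ is threaded through each step via continuous dependence of all constructions on $\xi$ over the compact set $J$.
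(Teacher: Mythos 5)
Your overall strategy is essentially the paper's: rescale around $\xi$, pass to trace-normalized canonical systems on $[0,1]$, use compactness (normal families for $U_L$, weak-$*$ compactness for the Hamiltonians), and identify the limit via the $m$-function and de Branges uniqueness. The paper packages this as a continuity/homeomorphism statement between the spaces of $m$-functions, Hamiltonians, solutions, and kernels, plus a scaling identity $M_{\xi,r}(t,z)=M_{\xi,1}(rt,z/r)$, rather than via subsequential limits, but the content is the same. Your rigidity step is under-justified as written: the constraint $M(z)\eta(\xi)\in\overline{\bbC_+}$ for all $z\in\bbC_+$ concerns the Weyl \emph{disk} at $t=1$ of a system on $[0,1]$, which is a genuine disk, and de Branges uniqueness applies to limit-point half-line systems, not to finite intervals. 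The step can be completed — since $M(1,\cdot)\eta(\xi)\in\fM$, glue onto $[1,\infty)$ the unique trace-normalized limit-point system with that Weyl function; the glued half-line system is trace-normalized with $m$-function identically $\eta(\xi)$, so de Branges forces it to equal $\mathring H_{\eta(\xi)}$ everywhere, in particular on $[0,1]$ — but this gluing argument is the missing content behind your assertion that the identification ``is forced.''

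The genuine gap is (ii)$\Rightarrow$(i). You attribute it to ``the subordinacy equivalence for canonical systems (the analog of Jitomirskaya--Last),'' but subordinacy theory does not deliver this implication. Gilbert--Pearson/Jitomirskaya--Last theory gives two-sided bounds, with universal constants, comparing $|m(\xi+i\epsilon)|$ to ratios of solution norms; it characterizes \emph{when} a normal limit is finite or infinite via subordinate solutions, but in the non-subordinate case $\eta(\xi)\in\bbC_+$ (the case of main interest here) it does not yield \emph{existence} of the limit $\lim_{\epsilon\downarrow 0}m(\xi+i\epsilon)$, let alone its value, from convergence of $\frac{1}{\tau_\xi(L)}\cK_L(\xi,\xi)$. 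What is actually needed is the continuity of the de Branges correspondence in the direction Hamiltonians $\to$ $m$-functions: condition (ii) is exactly the statement that the rescaled trace-normalized Hamiltonians $H_{\xi,r}(t)=H_{\xi,1}(rt)$ converge (in the sense of locally uniform convergence of antiderivatives) to the constant Hamiltonian $\mathring H_{\eta(\xi)}$, and one must then invoke the fact that this convergence implies locally uniform convergence of the associated Weyl functions $m(\xi+\cdot/r)$ to the Weyl function of the constant system, which is the constant $\eta(\xi)$; evaluating at $z=iy$ gives the normal limit. Without this ingredient (which the paper isolates as its Theorem~\ref{thm:Contin}), your cycle of implications does not close.
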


In this setting it also makes sense to extract the scalar kernel
\begin{equation}\label{frommatrixtoscalarkernel}
K_L(z,w) = \begin{pmatrix} 1 \\ 0 \end{pmatrix}^* \cK_L(z,w)  \begin{pmatrix} 1 \\ 0 \end{pmatrix}
\end{equation}
and explicitly state the corresponding version of Theorem~\ref{thmImaginaryLimitJacobi}:

\begin{theorem}\label{thmImaginaryLimitCanonical}
Let $T(x,z)$ be as in Hypothesis~\ref{Txzcanonicalsystem}. Let $m$ be the corresponding $m$-function \eqref{WeylIntersection} and $K_L$ the corresponding kernels given by \eqref{kernelfromTdefn}, \eqref{frommatrixtoscalarkernel}. Fix a compact $J \subset \bbR$ and assume that for some $0 < \alpha < \pi/2$, \eqref{limitImm} holds uniformly in $\xi \in J$. Then uniformly on compact regions of $(\xi, z,w) \in J \times \bbC \times \bbC$, 
\begin{equation}\label{bulkuniversality3augL}
\lim_{L\to\infty}\frac{K_L\left(\xi+\frac{z}{f_\mu(\xi) K_L(\xi,\xi)},\xi+\frac{w}{f_\mu(\xi) K_L(\xi,\xi)}\right)}{K_L(\xi,\xi)}=\frac{\sin(\pi(\overline{w}-z))}{\pi(\overline{w}-z)}.
\end{equation}
\end{theorem}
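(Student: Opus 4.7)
The plan is to reduce Theorem \ref{thmImaginaryLimitCanonical} to the matrix equivalence Theorem \ref{thmuniformnormallimit}, exploiting two observations: the scalar kernel $K_L$ is the $(1,1)$ entry of $\cK_L$, and, after the proper rescaling, this entry of the limiting matrix kernel depends only on $\Im\eta$ -- precisely the quantity pinned down by the nontangential hypothesis. I start with the clean case where $m$ has a uniform normal limit $\eta: J \to \overline{\bbC_+}$. By the standard Herglotz fact that nontangential and normal boundary values of $\Im m$ agree wherever both exist, the hypothesis forces $\Im \eta(\xi) = \pi f_\mu(\xi) \in (0,\infty)$, so $\eta(\xi) \in \bbC_+$. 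Theorem \ref{thmuniformnormallimit} then supplies matrix convergence to $\mathring \cK_{\eta(\xi)}$ at scale $\tau_\xi(L)$, and its $(1,1)$ entry reads $\sin(h_\eta(\overline w - z))/(\Im \eta \cdot (\overline w - z))$ with $h_\eta = \Im \eta / (1+|\eta|^2)$. Converting from the matrix scale $\tau_\xi(L)$ to the scalar scale $f_\mu(\xi) K_L(\xi,\xi)$ using $\tau_\xi(L)/K_L(\xi,\xi) \to 1 + |\eta|^2$ from \eqref{eqn:scalechange}, the identity $h_\eta(1+|\eta|^2) = \Im \eta = \pi f_\mu(\xi)$ produces a cancellation that collapses everything to $\sin(\pi(\overline w - z))/(\pi(\overline w - z))$, as in \eqref{bulkuniversality3augL}, and crucially this limit is independent of $\Re \eta(\xi)$.

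In general only the nontangential boundary value of $\Im m$ is assumed -- $\Re m(\xi + iy)$ may oscillate as $y \downarrow 0$, so a normal limit of $m$ need not exist and Theorem \ref{thmuniformnormallimit} does not directly apply. I use a subsequence principle. Parametrize $\cK_L(\xi,\xi)/\tau_\xi(L) = \mathring H_{\eta_L(\xi)}$ via $\eta_L(\xi) \in \overline{\bbC_+}$. The machinery behind Theorem \ref{thmuniformnormallimit} furnishes an asymptotic identification $\eta_L(\xi) \approx m(\xi + i/\tau_\xi(L))$; since $y = 1/\tau_\xi(L) \downarrow 0$ is a vertical -- hence nontangential -- approach and $\Im m(\xi + iy) \to \pi f_\mu(\xi)$ uniformly on $J$, any subsequential limit $\eta_\infty$ of $\eta_L(\xi)$ must satisfy $\Im \eta_\infty = \pi f_\mu(\xi) > 0$, leaving only $\Re \eta_\infty$ unconstrained. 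For any sequence $L_n \to \infty$ and $(\xi_n, z_n, w_n) \to (\xi, z, w) \in J \times \bbC \times \bbC$, compactness of $\overline{\bbC_+}$ on the Riemann sphere extracts a further subsequence with $\eta_{L_n}(\xi_n) \to \eta_\infty \in \bbC_+$; invoking the proof of Theorem \ref{thmuniformnormallimit} along this subsequence yields matrix kernel convergence to $\mathring \cK_{\eta_\infty}$, and the calculation above identifies the subsequential scalar limit as the sinc kernel -- the same limit regardless of the extraction. The subsequence principle then gives the full uniform limit \eqref{bulkuniversality3augL}.

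The main obstacle is running the equivalence of Theorem \ref{thmuniformnormallimit} along a subsequence, since its statement is formulated for full limits; I must lean on the ingredients of its proof -- specifically, the asymptotic link between $\eta_L(\xi)$ and $m(\xi + i/\tau_\xi(L))$ -- to convert subsequential convergence of $\eta_L$ into subsequential convergence of the matrix kernel. A subsidiary difficulty is the uniformity in $\xi \in J$: $\Re \eta_L(\xi)$ need not be equicontinuous in $\xi$, so Arzel\`a--Ascoli is unavailable, and I instead argue by contradiction, reducing to a single-point subsequential argument at the accumulation point $\xi = \lim \xi_n$, where the uniformity of the nontangential hypothesis on $J$ is precisely what forces $\Im \eta_\infty = \pi f_\mu(\xi)$.
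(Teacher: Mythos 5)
Your first paragraph (the case where $m$ has a genuine uniform normal limit $\eta$) is correct and consistent with the paper: the $(1,1)$ entry of $\mathring \cK_{\eta}$ together with the scale change $\tau_\xi(L)/K_L(\xi,\xi)\to 1+\lvert\eta\rvert^2$ collapses to the sinc kernel independently of $\Re\eta$. The gap is in the general case, at the central step of your subsequence argument. You define $\eta_L(\xi)$ by $\cK_L(\xi,\xi)/\tau_\xi(L)=\mathring H_{\eta_L(\xi)}$, extract a subsequence with $\eta_{L_n}\to\eta_\infty$, and then assert that ``invoking the proof of Theorem~\ref{thmuniformnormallimit} along this subsequence yields matrix kernel convergence to $\mathring\cK_{\eta_\infty}$.'' That is the implication (ii)$\Rightarrow$(iv) of Theorem~\ref{thmuniformnormallimit} run along a subsequence, and that implication is not subsequential: its proof passes through the $m$-function and relies on Lemma~\ref{lemmanormallimitconstant} together with the scaling relation $H_{\xi,r}(t)=H_{\xi,1}(rt)$ for \emph{all} $r$. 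The paper explicitly notes, right after Lemma~\ref{lemmanormallimitconstant}, that the sequential version fails ($m_{r_n}$ can converge in $\fM$ to a nonconstant limit), and convergence of the single normalized matrix $\cK_{L_n}(\xi,\xi)/\tau_\xi(L_n)$ does not control the off-diagonal rescaled kernel along that subsequence. Your auxiliary claim $\eta_L(\xi)\approx m(\xi+i/\tau_\xi(L))$ is also never justified and does not follow from anything in the paper: the correspondence between $H$ and $m$ is nonlocal, and the stated equivalences only identify the two sides when a limit is assumed to exist on one of them.

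What is missing is a mechanism for neutralizing the possibly divergent $\Re m$. The paper's mechanism (Section~\ref{sectionLimitImm}) is the shifted rescaling trick: subtract $c_{\xi,r}=\Re m(\xi+i/r)$, prove via the Schwarz integral formula on a disk inside the nontangential sector that $m(\xi+\cdot/r)-c_{\xi,r}\to i\pi f_\mu(\xi)$ in $\fM$ (Lemma~\ref{lemmaCaratheodory}), and note that this shift acts by an upper triangular matrix fixing $\binom{1}{0}$, so it alters the scalar kernel family only by a nonlinear reparametrization of $L$, handled by the compactness argument with the set $\cS$ and Lemma~\ref{lemmalackofpointmass}. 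If you want to retain a subsequence principle, the correct objects to compactify are the rescaled functions $m(\xi+\cdot/r)$ themselves in the compact space $\fM$: any subsequential limit $f$ has $\Im f\equiv\pi f_\mu(\xi)$ on the open sector by hypothesis, hence is a constant $b+i\pi f_\mu(\xi)$ by the identity theorem, and Theorem~\ref{thm:Contin} plus your cancellation of $b$ finishes the proof. That repaired route is a legitimate alternative to the paper's, but it is not the argument you wrote, and the step you did write does not go through.
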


These results have immediate applications to the Schr\"odinger equation
\begin{equation}\label{Schrodinger}
-\partial_x^2y(x,z)+V(x)y(x,z)=zy(x,z),
\end{equation}
with real-valued function $V$ such that $V\in L^1([0,x])$ for all $x < \infty$; universality limits for Schr\"odinger operators were previously considered in a special case in \cite{MaltsevCMP}.  To account for a boundary condition 
\begin{equation}\label{boundarycondition2aug}
\cos \beta f(0) + \sin\beta f'(0) = 0 
\end{equation}
at the regular endpoint at $0$, we fix $\beta \in [0,\pi)$ and consider the eigensolutions $\phi(x,z)$, $\theta(x,z)$ at energy $z$, with initial conditions
\[
\begin{pmatrix}
\partial_x \phi(0,z)& \partial_x \theta(0,z)\\
\phi(0,z)& \theta(0,z)
\end{pmatrix} = R_\beta^{-1}, \qquad R_\beta = \begin{pmatrix}
\cos \beta & - \sin \beta \\
\sin \beta & \cos \beta
\end{pmatrix}
\]
(for $\beta= 0$, these are the Dirichlet and Neumann solutions, respectively). These give rise to the transfer matrices
\[
T(x,z)= R_\beta \begin{pmatrix}
\partial_x \phi(x,z)& \partial_x \theta(x,z)\\
\phi(x,z)& \theta(x,z)
\end{pmatrix}
\]
which obey an equation of the form \eqref{canonicalsystemAB} (see Section 2). As we will explain, a calculation gives   
\begin{equation}\label{SchrodingerMatrixCDKernel}
\cK_L(z,w)=\begin{pmatrix}
\int_0^L \phi(x,z) \overline{\phi(x,w)} \,dx & \int_0^L \theta(x,z) \overline{\phi(x,w)} \,dx  \\
\int_0^L \phi(x,z) \overline{\theta(x,w)} \,dx & \int_0^L \theta(x,z) \overline{\theta(x,w)} \,dx  
\end{pmatrix}.
\end{equation}
Its $(1,1)$-entry
\begin{equation}\label{CDkernelSchrodinger}
K_L(z,w) = \int_0^L \phi(x,z)  \overline{\phi(x,w)} dx
\end{equation}
gives reproducing kernels for subspaces in the spectral representation of the Schr\"odinger operator. In particular, Theorems~\ref{thmuniformnormallimit}, \ref{thmImaginaryLimitCanonical} immediately apply to any half-line Schr\"odinger operator in the limit point case at $\infty$. For concreteness, we state the corresponding version of Theorem~\ref{thmImaginaryLimitCanonical}:

\begin{theorem}\label{thmImaginaryLimitSchrodinger}
Consider a real-valued potential $V$ with $V \in L^1([0,x])$ for all $x < \infty$ which is limit point at $\infty$. Fix $\beta \in [0,\pi)$ and consider the operator $H_{V,\beta} = - \frac{d^2}{dx^2} + V$ on $L^2([0,\infty))$ with domain
\begin{align*}
D(H_{V,\beta}) = \{ f \in L^2([0,\infty)) \mid & f, f' \in \AC([0,x])\,\forall x < \infty,   -f'' + Vf \in L^2([0,\infty)) , \eqref{boundarycondition2aug}\text{ holds} \}.
\end{align*}
Let $m(z)$ be its Weyl $m$-function. Let $J \subset \bbR$ and assume  that for some $0 < \alpha < \pi/2$, the limit \eqref{limitImm} converges uniformly in $\xi \in J$ and $0 < f_\mu(\xi) < \infty$. Then uniformly on compact regions of $(\xi,z,w) \in J \times \bbC \times \bbC$, the kernel \eqref{CDkernelSchrodinger} obeys \eqref{bulkuniversality3augL}.
\end{theorem}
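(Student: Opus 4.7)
The plan is to reduce Theorem~\ref{thmImaginaryLimitSchrodinger} directly to Theorem~\ref{thmImaginaryLimitCanonical} by recasting the half-line Schrödinger problem as a canonical system satisfying Hypothesis~\ref{Txzcanonicalsystem}. The work is essentially bookkeeping: exhibit $A(x)$ and $B(x)$, check the algebraic hypotheses, and verify that the $m$-functions and scalar kernels on the two sides of the reduction coincide.

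First I would rewrite \eqref{Schrodinger} as a first-order system for $(\partial_x y, y)^T$, giving $u'=\begin{pmatrix} 0 & V-z \\ 1 & 0\end{pmatrix} u$, and then form $T(x,z)=R_\beta\Phi(x,z)$ where $\Phi$ is the fundamental solution of this system with $\Phi(0,z)=R_\beta^{-1}$, so that $T(0,z)=I$. Since $R_\beta = \cos\beta\cdot I + \sin\beta\cdot j$ commutes with $j$, a direct calculation shows that $T$ satisfies \eqref{canonicalsystemAB} with
\begin{equation*}
A(x) = R_\beta \begin{pmatrix} 0 & 0 \\ 0 & 1 \end{pmatrix} R_\beta^{-1}, \qquad B(x) = R_\beta \begin{pmatrix} -1 & 0 \\ 0 & V(x) \end{pmatrix} R_\beta^{-1}.
\end{equation*}
Both are real symmetric, $A\ge 0$, and since $j$ has vanishing diagonal and each bracketed matrix is diagonal, $\tr(A(x)j)=\tr(B(x)j)=0$. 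Local integrability of $V$ yields local integrability of $A$ and $B$, and the assumption that $V$ is limit point at $\infty$ in the sense of Weyl--Titchmarsh theory translates, via standard Weyl-disk arguments, into \eqref{limitpointcondition}.

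Next I would verify the kernel identity \eqref{SchrodingerMatrixCDKernel}. Since $R_\beta$ is orthogonal, $R_\beta^*=R_\beta^{-1}$, and a one-line multiplication gives
\begin{equation*}
T(x,w)^* A(x) T(x,z) = \Phi(x,w)^* \begin{pmatrix} 0 & 0 \\ 0 & 1 \end{pmatrix} \Phi(x,z) = \begin{pmatrix} \overline{\phi(x,w)}\,\phi(x,z) & \overline{\phi(x,w)}\,\theta(x,z) \\ \overline{\theta(x,w)}\,\phi(x,z) & \overline{\theta(x,w)}\,\theta(x,z) \end{pmatrix},
\end{equation*}
so integrating from $0$ to $L$ recovers \eqref{SchrodingerMatrixCDKernel}; the extraction \eqref{frommatrixtoscalarkernel} then yields precisely \eqref{CDkernelSchrodinger}. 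Finally, I would verify that the canonical-system $m$-function defined by the Weyl-disk intersection \eqref{WeylIntersection} coincides with the standard Weyl--Titchmarsh $m$-function of the self-adjoint operator $H_{V,\beta}$; this follows from the usual characterization $\theta(\cdot,z)+m(z)\phi(\cdot,z)\in L^2([0,\infty))$ for $z\in\bbC_+$ together with the identification of Weyl disks on both sides.

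With these identifications in place, Theorem~\ref{thmImaginaryLimitCanonical} applies verbatim and produces \eqref{bulkuniversality3augL}. The only step with analytic content is the identification of the canonical-system $m$-function with the Weyl--Titchmarsh $m$-function of $H_{V,\beta}$, which ensures that the hypothesis \eqref{limitImm} on the Schrödinger $m$-function transfers to the same condition required in Theorem~\ref{thmImaginaryLimitCanonical}; this is the main potential obstacle, but it is classical Weyl theory.
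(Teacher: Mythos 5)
Your proposal is correct and follows exactly the paper's route: the paper proves this theorem as a direct corollary of Theorem~\ref{thmImaginaryLimitCanonical} via the reduction in Section~2, which exhibits the same $A(x)=R_\beta\operatorname{diag}(0,1)R_\beta^*$ and $B(x)=R_\beta\operatorname{diag}(-1,V(x))R_\beta^*$ and the same identity $T(x,w)^*A(x)T(x,z)=e_x(w)^*e_x(z)$ yielding \eqref{SchrodingerMatrixCDKernel}. Your added care about the limit-point condition and the identification of the canonical-system $m$-function with the Weyl--Titchmarsh $m$-function only makes explicit what the paper treats as standard.
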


For a probability measure $\mu$ on the unit circle $\partial\bbD$ whose support is not a finite set, orthonormal polynomials on the unit circle (OPUC) are denoted by $\varphi_n(z)$ and once again defined by the Gram--Schmidt process applied to the sequence $\{z^n \}_{n=0}^\infty$.  The reflected polynomials are
\[
\varphi_n^*(z) = z^n \overline{ \varphi_n (1 / \overline{z} ) }
\]
and the Christoffel--Darboux kernel for OPUC is given by the formula
\begin{equation}\label{CDkernelOPUC1}
k_n(z,w) = \frac{ \varphi_n^*(z) \overline{ \varphi_n^* (w) } -  \varphi_n(z) \overline{ \varphi_n (w) } }{ 1 - z \overline{w} },
\end{equation}
see, e.g, \cite{SimonCDkernel}. As before, there are different conventions resulting in $n+1$ instead of $n$ in this formula, and different conventions for placement of the complex conjugate. 

The Carath\'eodory function corresponding to $\mu$ is defined by
\[
F(z) = \int_{\partial\bbD} \frac{e^{i\theta} + z}{e^{i\theta} - z} \, d\mu(e^{i\theta}), \qquad z \in \bbD.
\]
We prove the following:

\begin{theorem}\label{thmOPUC}
Fix a compact set $J \subset\bbR$. Assume that for some $0 < \alpha < \pi/2$,
\begin{equation}\label{nontangentiallimitOPUC}
g_\mu(\xi) := \lim\limits_{\substack{z \to e^{i\xi} \\ -\alpha \le \arg (1 - z e^{-i\xi}) \le  \alpha}} \Re F(z)
\end{equation}
holds uniformly in $\xi \in J$ with $0 < g_\mu(\xi) < \infty$ for $\xi \in J$. Then uniformly on compact regions of $(\xi,z,w) \in J \times \bbC \times \bbC$,
\begin{equation}\label{OPUCconclusion}
\lim_{n\to\infty}\frac{e^{-in \frac{z - \overline{w}}{ 2 g_\mu(\xi) k_n(e^{i\xi},e^{i\xi})} } k_n\left(e^{i(\xi+\frac{z}{g_\mu(\xi) k_n(e^{i\xi},e^{i\xi})})},e^{i(\xi+\frac{w}{g_\mu(\xi) k_n(e^{i\xi},e^{i\xi})})} \right)}{k_n(e^{i\xi},e^{i\xi})}=\frac{\sin( \frac 12 (\overline{w}-z))}{\frac 12(\overline{w}-z)}
\end{equation}
\end{theorem}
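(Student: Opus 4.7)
The plan is to reduce Theorem~\ref{thmOPUC} to the canonical systems framework (Theorem~\ref{thmuniformnormallimit}) via a gauge transformation of the Szeg\H o recursion.

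First, I would introduce gauge-transformed polynomials $p_n(\theta):=e^{-in\theta/2}\varphi_n(e^{i\theta})$ and $p_n^*(\theta):=e^{-in\theta/2}\varphi_n^*(e^{i\theta})$, both entire in $\theta\in\bbC$, with the useful identity $\overline{p_n(\theta)}=p_n^*(\bar\theta)$ (visible from the Taylor expansion in $e^{i\theta}$). From the Szeg\H o recursion, the vector $(p_n,p_n^*)^T$ evolves under transfer matrices
\[
T_n(\theta) = \frac{1}{\rho_n}\begin{pmatrix} e^{i\theta/2} & -\bar\alpha_n e^{-i\theta/2} \\ -\alpha_n e^{i\theta/2} & e^{-i\theta/2} \end{pmatrix},
\]
which lie in $SU(1,1)$ for real $\theta$ (a direct check shows they preserve $|u|^2-|v|^2$, in agreement with the known invariance $|\varphi_n(e^{i\theta})|=|\varphi_n^*(e^{i\theta})|$). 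A fixed unitary Cayley-type conjugation converts $SU(1,1)$ into $SL(2,\bbR)$, after which embedding the discrete $n$-step recursion as a piecewise-constant continuous canonical system puts the problem into the setting of Hypothesis~\ref{Txzcanonicalsystem}. Under the composed transformation, the Carath\'eodory function $F$ becomes the Weyl $m$-function $\tilde m$ of the resulting canonical system, and condition \eqref{nontangentiallimitOPUC} on $\Re F$ at $e^{i\xi}$ translates to a positive finite nontangential limit of $\Im\tilde m$ at $\xi\in J$, uniform on $J$.

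Applying Theorem~\ref{thmuniformnormallimit} yields bulk universality of the matrix CD kernel of the transformed system at the scale $\tau_\xi(n)=\tr\cK_n(\xi,\xi)$. A short comparison (tracking the change of reference measure from $dx$ to $d\theta/(2\pi)$) shows that $\tau_\xi(n)$ is proportional to $c_n=g_\mu(\xi)k_n(e^{i\xi},e^{i\xi})$, so the rescaling in \eqref{OPUCconclusion} is the correct one. To extract the scalar OPUC kernel, I would use the Christoffel--Darboux formula \eqref{CDkernelOPUC1} together with $\overline{p_n(\theta)}=p_n^*(\bar\theta)$ to obtain, for $\theta_1=\xi+z/c_n$ and $\theta_2=\xi+w/c_n$, the identity
\[
e^{-in(z-\bar w)/(2c_n)}\,k_n\bigl(e^{i\theta_1}, e^{i\theta_2}\bigr) = \frac{p_n^*(\theta_1)p_n(\bar\theta_2) - p_n(\theta_1)p_n^*(\bar\theta_2)}{1-e^{i(z-\bar w)/c_n}}.
\]
The matrix universality limit delivers convergence of the numerator, and since $1-e^{i(z-\bar w)/c_n}\sim -i(z-\bar w)/c_n$, dividing by $k_n(e^{i\xi},e^{i\xi})$ produces the limit $2\sin(\tfrac12(\bar w-z))/(\bar w-z)$, which is exactly \eqref{OPUCconclusion}. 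The phase factor $e^{-in(z-\bar w)/(2c_n)}$ is precisely the artifact of the gauge $e^{-in\theta/2}$ used to absorb the $n$-dependent rotation and make the canonical systems framework applicable.

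The main obstacle is the bookkeeping in the gauge and Cayley transformations: identifying the precise Cayley map converting $F$ to $\tilde m$, verifying that nontangential approach to $e^{i\xi}$ in the disk corresponds to nontangential approach to $\xi$ in the upper half-plane under the composed transformation, and checking that $g_\mu$ (the density of $\mu$ with respect to $d\theta/(2\pi)$) governs the correct scale. None of this is essentially difficult once the $SU(1,1)\leftrightarrow SL(2,\bbR)$ correspondence is made explicit, but the many conventions must be tracked with care, and the embedding of the discrete recursion as a continuous canonical system requires a small technical argument to confirm that the hypothesis of the limit point case \eqref{limitpointcondition} holds under Hypothesis~\ref{hypothesismu}'s OPUC analog.
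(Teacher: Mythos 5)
Your reduction to canonical systems is the same route the paper takes (gauge factor $e^{-in\theta/2}$, Cayley conjugation from $SU(1,1)$ to the $j$-setting, piecewise-constant embedding, $F\mapsto \tilde m$ with $\Re F$ becoming $\Im\tilde m$), and your kernel identity relating $e^{-in(z-\bar w)/(2c_n)}k_n$ to the transformed scalar kernel is correct. But there is one genuine gap: you apply Theorem~\ref{thmuniformnormallimit}, whose hypothesis is a uniform \emph{normal limit of the full $m$-function} $\tilde m(\xi+iy)\to\eta(\xi)$. The hypothesis \eqref{nontangentiallimitOPUC} only controls $\Re F$, hence only $\Im\tilde m$; it gives no control whatsoever on $\Re\tilde m$ (which corresponds to $\Im F$, a conjugate-function/Hilbert-transform object that can fail to have boundary limits even for purely a.c.\ measures with continuous density — the paper stresses exactly this point). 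So the hypotheses of Theorem~\ref{thmuniformnormallimit} are not met, and consequently you do not have the matrix universality limit that you invoke to "deliver convergence of the numerator." The correct tool is Theorem~\ref{thmImaginaryLimitCanonical}, the scalar-kernel result proved via the shifted rescaling trick of Section~\ref{sectionLimitImm} (the additive correction $c_{\xi,r}=\Re m(\xi+i/r)$ implemented by a triangular gauge matrix, which leaves the scalar kernel invariant but destroys the simple trace parametrization). That theorem needs only the nontangential limit of $\Im\tilde m$, which is exactly what \eqref{nontangentiallimitOPUC} supplies after checking that nontangential approach to $e^{i\xi}$ in $\bbD$ maps to a Stolz sector at $\xi$ in $\bbC_+$ under $z\mapsto e^{iz}$.

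With that substitution the rest of your argument closes: the scalar universality limit \eqref{bulkuniversality3augL} applies directly to the quotient in your kernel identity (rather than to its numerator and denominator separately), the factor $(1-e^{i(z-\bar w)/c_n})\sim -i(z-\bar w)/c_n$ converts the half-plane kernel into the OPUC one, and the bookkeeping $f_\mu(\xi)=g_\mu(\xi)/\pi$, $k_n(e^{i\xi},e^{i\xi})=2K_n(\xi,\xi)$ produces the scale $2\pi f_\mu(\xi)K_n(\xi,\xi)=g_\mu(\xi)k_n(e^{i\xi},e^{i\xi})$ and the stated sinc limit.
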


The exponential prefactor can be traced back to an asymmetry in the setup: the kernel \eqref{CDkernelOPUC1} is the reproducing kernel for $\spann \{ z^j \mid j = 0,1,\dots, n-1\}$. If the result was written in terms of reproducing kernels for $\spann \{ z^{j - \lfloor n\rfloor/2} \mid j = 0,1,\dots, n-1\}$, which would be related to the CMV basis, this factor would not be needed. Of course, this prefactor can be simplified in cases when $k_n(e^{i\xi},e^{i\xi}) / n$ has a limit, as is the case in prior results \cite{LevLubOPUC,Findley08,LubinskyNguyen,Simanek18}. The linear behavior of the Christoffel function is studied in great generality in \cite{Totik14}. For an OPUC analog of the result of \cite{MorenoFinkelSousaConstr}, see \cite{Simanek17,BNR}.

As on the real line, it is well known \cite{RudinRealandComplex} that \eqref{nontangentiallimitOPUC} is the density of $\mu$ with respect to Lebesgue measure on $\bbR$, and that its pointwise existence is implied by Lebesgue point conditions.

Let us now outline our approach. Matrices  $M_L(z,\xi) = T(L,\xi)^{-1} T(L,z)$ have been used before in considerations of universality. They can be motivated by variation of parameters; more substantially, since $T(L,\xi) \in \SL(2,\bbR)$ and $M_L(\xi,\xi) = I_2$, they can be viewed as a gauge transformation of a $j$-inner entire function $T(L,z)$ into the Potapov--de Branges gauge. If for the purpose of this motivation we assume that $\xi = 0$, then this is exactly in the setting of de Branges canonical systems \cite{deBrangesHilbertSpace,RemlingBookCanSys}, which are initial value problems of the form
\begin{equation} \label{eq:genCanonical}
	j\partial_xM(x,z)=-zH(x)M(x,z)\quad M(0,z)=I_2,
\end{equation}
with entries of $H$ locally integrable, $H(x) \ge 0$ and $\tr (Hj) = 0$. We review elements of this theory in Section~\ref{sec:CanSys}.

Canonical systems are very natural objects from the perspective of inverse spectral theory. The main reason for this is the de Branges correspondence, which can be seen as a (more sophisticated) analog of Favard's theorem for canonical systems. That is, up to some normalizations discussed in Section \ref{sec:CanSys}, for any Herglotz function there exists a unique associated canonical system. In his foundational paper \cite{LubinskyAnnals}, Lubinsky developed the idea to conclude from convergence on the diagonal, convergence off the diagonal by comparing it with a reference measure and using an inequality between reproducing kernels called Lubinky's inequality. In this approach, it is necessary to construct a new measure which dominates both, the original measure and the reference measure. Due to Favard's theorem this is a powerful tool for orthogonal polynomials and has been used  by several authors \cite{SimonTwoExt08,TotikUniv09}. However, for classes of differential operators without Favard's theorem, this approach causes difficulties, whereas our theorems automatically apply. In Section~\ref{sectionContinuity} we describe the homeomorphisms which are derived from the de Branges correspondence.

Besides the de Branges correspondence, our paper relies on the flexibility of canonical systems to perform operations which would not be possible within the setting of orthogonal polynomials.

The proof of Theorem~\ref{thmuniformnormallimit} relies on a rescaling trick at the level of the canonical systems. Namely, the action of rescaling the spectral parameter $z$ in the $m$ function is explicitly linked to a rescaling of the $x$-variable for a trace-normalized canonical system. The scaling trick was initally found by Kasahara \cite{Kasahara76} for Krein strings and used by Eckhardt--Kostenko--Teschl \cite{EckKostTeschl} and Langer--Pruckner--Woracek \cite{LangerPruckWor} for canonical systems to investigate large energy asymptotics of the $m$-function. Our realization is that this can also be used in the other direction to prove matrix universality, by ``zooming in'' towards $\xi \in \bbR$ instead of out towards $\infty$. Also, whereas they use the rescaling trick to conclude properties of the $m$-function, we use the $m$-function to conclude universality limits.  This will be presented in Section~\ref{sectionLimitm}.

We also emphasize that although the matrix CD kernel leads to particularly elegant results, the results for the scalar CD kernel are substantially stronger, because the imaginary part of a Herglotz function has substantially better boundary behavior than the real part; for instance, even for purely a.c. measures with continuous density, $\Re m$ doesn't necessarily have finite normal limits. Thus, removing the influence of $\Re m$ was of great interest to us. In order to achieve this, we had to find a new shifted rescaling trick with a shift in the value of $m$, which is presented in Section~\ref{sectionLimitImm}.

The application to orthogonal polynomials on the unit circle requires some additional arguments; this is presented in Section~\ref{sectionOPUC}.

\subsection*{Acknowledgements}
We are grateful to Harald Woracek and Peter Yuditskii for very helpful discussions. 

\section{Canonical systems and Christoffel--Darboux kernels}\label{sec:CanSys}

\subsection{Christoffel--Darboux kernels and Weyl disks}

Let us start in the setting of Hypothesis~\ref{Txzcanonicalsystem}. The solution $T(x,z)$ is considered in the sense of the integral equation
\[
T(L,z) = I_2 - \int_0^L j (-z A(x) + B(x)) T(x,z) \,dx.
\]
In particular, $T$ is locally absolutely continuous in $x$ for each $z$, jointly continuous in $x$ and $z$, and entire in $z$ for each $x$. Jacobi's formula for the derivative of the determinant shows $\det T(x,z) = 1$.

Differentiating the form $T(x,w)^* j T(x,z)$ with respect to $x$ yields
\begin{align}\label{eq:TransferJForm}
T(L,w)^*j T(L,z)-j=(\overline{w}-z)\int_0^L T(x,w)^* A(x) T(x,z)\, dx.
\end{align}
With kernels defined by  \eqref{kernelfromTdefn}, this gives an analog of the Christoffel--Darboux formula,
\begin{equation}\label{eq:CDKernelJForm}
\cK_L (z,w) = \frac{ T(L,w)^*j T(L,z)-j }{\overline{w} - z}.
\end{equation}
Moreover, the case $w = z  \in \bbR$ in \eqref{eq:TransferJForm} shows the $j$-unitary property \eqref{eq:jUnitary}, i.e., $T(L,z) \in \SL(2,\bbR)$ for $z\in \bbR$, and the case $w = z \in \bbC_+$ shows the $j$-monotonic property \eqref{eq:jMonotonic} which leads to the Weyl disk formalism.

The $j$-monotonic family $\{T(L,z)\}$ parametrized by $L \in [0,\infty)$ is subject to a \emph{gauge transformation} \cite{BLY1}
\[
\{T(L,z) \} \mapsto \{ U(L) T(L,z) \}
\]
where $U(L) \in \SL(2,\bbR)$ for each $L$. Recall that $U(L) \in \SL(2,\bbR)$ is equivalent to $U(L)^* j U(L) = j$, so gauge transformations don't affect $j$-monotonicity, the Weyl disks, $m$-function, or matrix CD kernels. In particular, choosing $U(L) = T(L,0)^{-1}$ leads to a new $j$-monotonic family
\begin{equation}\label{3jul3}
M(L,z) = T(L,0)^{-1} T(L,z)
\end{equation}
which is in the Potapov--de Branges gauge
\begin{equation}\label{eqn:PdB}
M(x,0) = I_2, \qquad \forall x.
\end{equation}
A direct computation then shows that $M(x,z)$ solves 
\begin{equation}\label{fromhamiltoniantocanonical}
j\partial_x M(x,z)=- z H(x)M(x,z),\quad H(x):=T(x,0)^*A(x)T(x,0).
\end{equation}
This reduces an arbitrary Hamiltonian system to a canonical system of the type \eqref{eq:genCanonical}.  It is also notable that the antiderivative of $H$ appears in the kernel: directly from
\eqref{kernelfromTdefn} it follows that
\[
\int_0^L H(x) \,dx = \int_0^L T(x,0)^* A(x) T(x,0) \,dx = \cK_L(0,0).
\]

Although we use $0$ as a special reference point in this section, later in our work, transformations of the form \eqref{3jul3} will be preceded by an affine transformation in the spectral parameter; instead of considering a family $T(L,z)$, we will study the families $T(L, \xi + z/r)$, where $\xi \in \bbR$ and $r > 0$ is a scaling parameter. Taking the limit as $r \to \infty$ will correspond to universality at the point $\xi$.

\subsection{Canonical systems and de Branges theorem}
The case $B = 0$ corresponds to the important special case of de Branges canonical systems; in this case we will follow the notation  \eqref{eq:genCanonical}. It is also common to refer to $H$ as a Hamiltonian. Throughout we only consider canonical systems on an interval $[0,L_0)$, where $L_0\in(0,\infty]$, with the property 
\begin{equation}\label{limitpointconditioncanonicalsystem}
\int_0^{L_0} \tr H(x)\, dx=\infty.
\end{equation}
This property precisely corresponds to the limit point case \cite{RemlingBookCanSys}, i.e., the case in which Weyl disks shrink to a point as $x \to L_0$, thereby defining the $m$-function or Weyl function of the canonical system by \eqref{WeylIntersection}. Comparing \eqref{limitpointconditioncanonicalsystem} with \eqref{fromhamiltoniantocanonical} explains why we could use \eqref{limitpointcondition} as a characterization of the limit point case in Hypothesis~\ref{Txzcanonicalsystem}.

It is very convenient to pass to trace-normalized canonical systems,  i.e., impose
\[
\tr H = 1
\]
Lebesgue-a.e.. To help notationally distinguish the settings, we will denote the parameter by $t$ or $T$ when discussing trace-normalized canonical systems. Due to the limit point assumption \eqref{limitpointconditioncanonicalsystem},  we always consider trace normalized Hamiltonians on the full half line $[0,\infty)$.

An inverse theory of canonical systems was studied by Potapov and de Branges \cite{PotapoovMulStruct,deBrangesHilbertSpace}.  In particular, if we restrict to trace-normalized canonical systems and consider as equal Hamiltonians which agree Lebesgue-a.e., there is a remarkable bijective correspondence due to de Branges:

\begin{theorem}[de Branges \cite{deBrangesHilbertSpace}; see also {\cite[Theorem 5.1]{RemlingBookCanSys}}] \label{deBrangesTheorem}
	Each Herglotz function is the $m$-function of a unique trace-normalized canonical system \eqref{eq:genCanonical}.
\end{theorem}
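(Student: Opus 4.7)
The plan is to prove Theorem~\ref{deBrangesTheorem} via de Branges' theory of Hilbert spaces of entire functions, which provides the bridge between Herglotz functions and trace-normalized canonical systems. For the existence half, given a Herglotz function $m$ I would first extract its associated positive Borel measure $\mu$ on $\bbR$ from the Herglotz representation, then assemble a de Branges space $\cH(E)$ for a Hermite--Biehler entire function $E$ whose associated spectral measure is $\mu$; this space is naturally identified isometrically with a subspace of $L^2(\mu)$. Writing $E = A - iB$ with $A, B$ real entire encodes the two distinguished solutions that will eventually become the columns of the transfer matrix of the canonical system.

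I would then produce a totally ordered chain of isometrically embedded de Branges subspaces $\{\cH(E_t)\}_{t \ge 0}$ exhausting $\cH(E)$; the de Branges ordering theorem, which asserts that any two isometrically embedded de Branges subspaces of a common ambient de Branges space are comparable under inclusion, is exactly what makes such a chain canonical. At each $t$ the pair $(A_t, B_t)$ assembles into a $j$-inner $2\times 2$ matrix $M(t,z)$ in Potapov--de Branges gauge, and differentiating along the chain yields $j \partial_t M(t,z) = -z H(t) M(t,z)$ for some $H(t) \ge 0$ with $\tr H(t) > 0$ a.e. Reparametrizing by arclength in the trace so that $\tr H(t) = 1$ produces the required trace-normalized Hamiltonian on $[0,\infty)$, and computing the intersection of the Weyl disks attached to $M(t,z)$ recovers $m$ as the $m$-function of this canonical system. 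For uniqueness, two trace-normalized Hamiltonians with the same $m$-function generate chains of de Branges subspaces isometrically embedded in the common model space associated to $\mu$; the ordering theorem forces these chains to coincide as sets, while trace-normalization pins down the parametrization, so the Hamiltonians must agree Lebesgue-a.e.

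The main obstacle is the technical heart of the construction: showing that the chain $\cH(E_t)$ can be produced continuously, and indeed absolutely continuously, in the trace-normalized parameter, so that a genuine matrix density $H(t)$ exists rather than merely a matrix-valued measure. This requires de Branges' structural analysis of subspace chains, in particular the classification of regular versus singular points, sharp criteria for isometric (as opposed to merely contractive) inclusions, and careful treatment of \emph{dead intervals} on which $H(t)$ is a constant rank-one projection; such intervals arise, for instance, for rational Herglotz functions, and correspond to canonical systems that are effectively supported on a compact interval but extended to $[0,\infty)$ by a degenerate tail. Handling these degeneracies, together with the passage from the discrete combinatorics of the subspace chain to the infinitesimal differential equation \eqref{eq:genCanonical}, is the substantive content of de Branges' original argument.
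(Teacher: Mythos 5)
This statement is quoted in the paper as a known result, attributed to de Branges and to Remling's book; the paper offers no proof of it, so there is no internal argument to compare yours against. Your outline does follow the standard route of those references (Herglotz representation, chains of isometrically embedded de Branges subspaces, the ordering theorem, differentiation of the chain to obtain $H$, trace reparametrization, and uniqueness again via the ordering theorem), and you correctly identify where the real work lies. But as written it is a roadmap, not a proof: every hard step --- the ordering theorem itself, the construction of the chain, its absolute continuity in the trace parameter, the existence of the density $H$ rather than a matrix measure, and the treatment of indivisible intervals --- is named and then deferred to ``de Branges' original argument.'' That is an honest description of the literature, but it does not constitute an independent proof.

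Two structural points in your sketch would need repair if you carried it out. First, in the limit point setting relevant here there is no single ambient space $\cH(E)$: the chain $\cH(E_t)$ has no maximal element, and the correct ambient object is $L^2(\mu)$ itself, with the ordering theorem applied to de Branges subspaces of $L^2(\mu)$. Second, the Herglotz representation $m(z)=az+b+\int\bigl(\tfrac{1}{t-z}-\tfrac{t}{1+t^2}\bigr)\,d\mu(t)$ is not determined by $\mu$ alone; the constants $a$ and $b$ carry genuine information (e.g.\ $a>0$ corresponds to the Hamiltonian beginning with an indivisible interval of a specific type), so a construction that starts only from the spectral measure cannot yield a bijection. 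Relatedly, the paper's ambient class $\fM$ includes the constants in $\bbR\cup\{\infty\}$, for which there is no spectral measure at all and the Hamiltonian is a constant rank-one projection; these degenerate cases need a separate (easy) argument outside your measure-based framework.
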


Any canonical system can be reduced to the trace-normalized case by a reparametrization of the interval, and this doesn't affect the $m$-function of the canonical system:

\begin{lemma}[{\cite[Proposition 1]{Romanov}}]\label{lem:Reparam} 
Let $\tilde H$ be the Hamiltonian of a limit-point canonical system on $[0,\tilde L_0)$ and $\tilde M$ the solution.
Then
\[
a(L) = \int_0^L \tr \tilde H(x) dx
\]
defines an increasing, locally absolutely continuous surjection $a: [0,\tilde L_0) \to [0,\infty)$, a function $M$ is uniquely defined by
\begin{equation}
\tilde M(x,z)= M(a(x),z),\label{eq:Reparam7}
\end{equation}
and $M(t,z)$ is the solution of a trace-normalized canonical system with the Hamiltonian $H$ obeying
\begin{align}
\tilde H(x)= H(a(x))a'(x). \label{eq:Reparam}
\end{align}
The two canonical systems have the same Weyl $m$-function.
\end{lemma}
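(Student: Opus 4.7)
The plan is to define $a$ and $H$ directly from $\tilde H$, verify the stated regularity of $a$ as a bookkeeping exercise, and then use the chain rule to transport the ODE satisfied by $\tilde M$ to a trace-normalized canonical system satisfied by $M$. The equality of the $m$-functions will follow once we observe that the Weyl disks are computed from the same family of $\SL(2,\bbC)$ matrices, merely re-indexed.

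First I would show that $a(L)=\int_0^L \tr\tilde H(x)\,dx$ is non-decreasing, locally absolutely continuous (since $\tr\tilde H \in L^1_{\loc}$), and surjective onto $[0,\infty)$ (this is precisely the limit point condition \eqref{limitpointconditioncanonicalsystem}). Continuity plus monotonicity means that each $t\in[0,\infty)$ has a nonempty closed preimage $a^{-1}(\{t\})$, which is either a single point or a closed interval. On any maximal interval $[x_1,x_2]$ where $a$ is constant, $\tr\tilde H=0$ a.e., hence $\tilde H=0$ a.e.\ (since $\tilde H\ge 0$), and therefore the ODE $j\partial_x \tilde M=-z\tilde H\tilde M$ implies $\tilde M(\,\cdot\,,z)$ is constant on $[x_1,x_2]$. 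Consequently we may unambiguously \emph{define} $M(t,z):=\tilde M(x,z)$ for any $x\in a^{-1}(\{t\})$, giving \eqref{eq:Reparam7}.

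Next I would derive the ODE for $M$. By Lebesgue's differentiation theorem, $a$ is differentiable with $a'(x)=\tr\tilde H(x)$ at a.e.\ $x$, and at a.e.\ such point where $a'(x)>0$ the function $M$ is differentiable at $t=a(x)$. At those points, the chain rule applied to \eqref{eq:Reparam7} gives
\begin{equation*}
j\partial_x \tilde M(x,z) = a'(x)\, j\partial_t M(a(x),z) = -z\tilde H(x) M(a(x),z),
\end{equation*}
so setting $H(a(x)):=\tilde H(x)/a'(x)$ yields $j\partial_t M(t,z)=-zH(t)M(t,z)$ and $\tr H(a(x))=1$. This defines $H$ Lebesgue-a.e.\ on $[0,\infty)$ (the complementary null set can be filled in arbitrarily, e.g.\ by $\tfrac12 I_2$) and produces the identity \eqref{eq:Reparam}. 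The initial condition $M(0,z)=I_2$ is inherited from $\tilde M(0,z)=I_2$, and by the uniqueness theorem for the integral equation, $M$ is the unique solution of the trace-normalized canonical system with Hamiltonian $H$.

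Finally, to match the $m$-functions, I would note that for each $z\in\bbC_+$ the Weyl disks $\tilde D(x,z)=\{w : \tilde M(x,z)w\in\overline{\bbC_+}\}$ and $D(t,z)=\{w:M(t,z)w\in\overline{\bbC_+}\}$ satisfy $\tilde D(x,z)=D(a(x),z)$ by \eqref{eq:Reparam7}. Because $a$ is surjective, the nested intersection $\bigcap_{x}\tilde D(x,z)$ equals $\bigcap_{t}D(t,z)$, so the unique point picked out by \eqref{WeylIntersection} is the same in both systems. The only delicate point in the whole argument is the handling of intervals of constancy of $a$, i.e.\ intervals on which $\tilde H$ vanishes; this is the step I would expect to be the principal obstacle, but it is resolved cleanly by the observation above that $\tilde M$ is automatically constant on such intervals, so no extra assumption on $\tilde H$ beyond $\tilde H\ge 0$ and local integrability is needed.
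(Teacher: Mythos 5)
The paper offers no proof of this lemma --- it is imported verbatim as Proposition~1 of Romanov's notes --- so there is nothing internal to compare against; judged on its own, your argument is correct and is the standard one, and you correctly identify and resolve the genuinely delicate point, namely the intervals of constancy of $a$ (where $\tr\tilde H=0$ a.e.\ forces $\tilde H=0$ a.e.\ by positivity, hence $\tilde M$ constant). Two steps deserve a little more care than you give them. First, to conclude that $M$ is \emph{the solution} of the trace-normalized system you invoke ``the uniqueness theorem for the integral equation,'' but an a.e.-differentiable function satisfying the ODE a.e.\ need not satisfy the integral equation; you must first check that $M(\cdot,z)$ is locally absolutely continuous in $t$. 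This is true and easy: since $0\le\tilde H$ gives $\lVert\tilde H\rVert\le\tr\tilde H$, the integral equation for $\tilde M$ yields
\[
\lVert\tilde M(x_2,z)-\tilde M(x_1,z)\rVert\;\le\;\lvert z\rvert\,\Bigl(\sup_{[0,x_2]}\lVert\tilde M(\cdot,z)\rVert\Bigr)\,\bigl(a(x_2)-a(x_1)\bigr),
\]
so $M(\cdot,z)$ is locally Lipschitz in $t$. Second, the assertion that $H$ is thereby ``defined Lebesgue-a.e.\ on $[0,\infty)$'' needs the observation that the increasing, locally AC map $a$ sends both the null set of bad $x$ and the set $\{x: a'(x)=0\}$ to null sets of $t$ (Luzin's property (N) and $\lvert a(E)\rvert\le\int_E a'\,dx$), so that a.e.\ $t$ is $a(x)$ for a unique $x$ with $a'(x)>0$. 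With these two routine points supplied, the chain-rule derivation of \eqref{eq:Reparam}, the trace normalization, and the identification of the Weyl disks via surjectivity of $a$ all go through as you describe.
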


\subsection{Jacobi recursion}
Let us consider the setting of Hypothesis~\ref{hypothesismu} and recall how it relates to canonical systems. Let $\{B(n,z)\}_{n=0}^\infty$ be the family of Jacobi transfer matrices \eqref{3jul2}. Then the matrices 
\[
T(n,z) = j_1   B(n,z) j_1, \qquad j_1 = \begin{pmatrix} -1 & 0 \\ 0 & 1 \end{pmatrix}
\]
form a $j$-monotonic family whose Weyl function is the same function $m(z)$ obtained as the Stieltjes transform \eqref{Stieltjes} of the orthogonality measure $\mu$ \cite[Section 3.2]{SimonSzegoBook}. By a direct calculation starting from \eqref{3jul2}, matrices $M(n,z)$ defined by \eqref{3jul3} obey
\[
M(n+1,z) = M(n,z) + z T(n,0)^{-1} N T(n,0)  M(n,z), \qquad N =  \begin{pmatrix} 0 & 0 \\ 1 & 0 \end{pmatrix}.
\]
This can be extended to a solution of a canonical system by linear interpolation: define for $x \ge 0$
\[
M(x,z) = M(\lfloor x \rfloor, z) + (x - \lfloor x \rfloor) ( M(\lfloor x \rfloor + 1, z)  - M(\lfloor x \rfloor, z)  ).
\]
This is the solution of a canonical system \eqref{eq:genCanonical} with piecewise constant Hamiltonian
\[
H(x) =  - j T({\lfloor x \rfloor},0)^{-1} N T({\lfloor x \rfloor},0) = e_{\lfloor x \rfloor}(0)^* e_{\lfloor x \rfloor}(0), \qquad e_n(z) = \begin{pmatrix} p_n(z) & q_n(z) \end{pmatrix}, 
\]
as is seen by a direct verification;  linear interpolation works because $T(n,0)^{-1} N T(n,0)$ is nilpotent.

Moreover, from \eqref{eq:genCanonical} and $H j H = 0$ we see that on intervals $[n,n+1]$,
\[
M(L,w)^* H(L) M(L,z) = M(n,w)^* H(n) M(n,z), \qquad L \in [n,n+1]
\]
and a direct calculation gives
\begin{equation}\label{singularintervalOP25jul}
M(n,w)^* H(n) M(n,z) = e_n(w)^* e_n(z)
\end{equation}
so if we follow the general definition of matrix CD kernels from \eqref{kernelfromTdefn}, \eqref{singularintervalOP25jul} shows that they are linearly interpolated,
\begin{equation}\label{linearinterpolationOPfromcanonical}
\cK_L(z,w) = \cK_n(z,w) + (L - n) e_n(w)^* e_n(z), \qquad L \in [n,n+1].
\end{equation}
Iterating this on intervals of length $1$ recovers the formula \eqref{eq:CDKernelJForm0}, and \eqref{linearinterpolationkernel} is immediate; thus, this explains that the general definition \eqref{kernelfromTdefn} of the matrix CD kernel  is compatible with the definition we gave for the orthogonal polynomial setting \eqref{eq:CDKernelJForm0}, \eqref{linearinterpolationkernel}.

\subsection{Schr\"odinger operators}
We briefly comment on the Schr\"odinger equation \eqref{Schrodinger}.  The second-order eigenfunction equation gives a first-order evolution of transfer matrices,
\[
j\partial_xT(x,z)=(-z A(x)+ B(x) )T(x,z),
\]
where a direct calculation using $R_\beta^{-1} = R_\beta^*$ gives
\[
A(x)= R_\beta \begin{pmatrix}
0&0\\0&1
\end{pmatrix}R_\beta^{*},\quad 
B(x)=R_\beta \begin{pmatrix}
-1& 0\\0& V(x)
\end{pmatrix} R_\beta^{*}.
\]
From here,
\[
T(x,w)^*  A(x)  T(x,z) = e_x(w)^* e_x(z), \qquad e_x(z) = \begin{pmatrix} \phi(x,z) & \theta(x,z) \end{pmatrix}
\]
which justifies the formula \eqref{SchrodingerMatrixCDKernel}.

\subsection{Constant Hamiltonians}
We are particularly interested in canonical systems with constant Hamiltonians. Constant trace-normalized Hamiltonians are parametrized by $\eta \in \overline{\bbC_+}$, i.e., they are of the form $H(t) = \mathring H_\eta$ for a unique $\eta \in \overline{\bbC_+}$.

Note that $\mathring H_\eta$ is singular if $\eta \in \bbR \cup \{\infty\}$. That is in this case we can write
\[
\mathring H_\eta =v_\eta v_\eta^*,\quad v_\eta =\frac{1}{\sqrt{1+ \eta^2}}\begin{pmatrix}
1\\
- \eta
\end{pmatrix}, \quad v_\infty = \begin{pmatrix} 0 \\ 1 \end{pmatrix}.
\]

\begin{example} \label{xmplconsthamiltonian}
Fix $\eta \in \overline{\bbC_+}$. The canonical system with the constant Hamiltonian $H(t) = \mathring H_\eta$, $t \in [0,\infty)$ has the following properties:
\begin{enumerate}[(a)]
\item the solution of the canonical system is
\begin{equation}\label{4jul2}
M(t,z) = e^{t jz \mathring H_\eta}
\end{equation}
This is also expressible as
\begin{equation}\label{4jul3}
M(t,z)= \begin{cases} \cos(t z h_\eta )I_2+\frac{\sin(tzh_\eta)}{h_\eta}j \mathring H_\eta & \eta \in \bbC_+ \\
I_2+tz j H_\eta & \eta \in \bbR \cup\{\infty\}
\end{cases}
\end{equation}
\item the $m$-function is a constant function, $m(z) = \eta$ for all $z\in \bbC_+$.
\item the kernel is
\[
\cK_t(z,w)=\frac{j(\cos(t h_\eta(\overline{w}-z))-1)+\frac{H_\eta}{h_\eta}\sin(t h_\eta(\overline{w}-z))}{\overline{w}-z}.
\]
\end{enumerate}
\end{example}

\begin{proof}
(a) \eqref{eq:genCanonical} has a unique solution, so the first claim follows by direct verification by plugging in \eqref{4jul2} into \eqref{eq:genCanonical}). Expanding \eqref{4jul2} into a power series, using
\begin{align}
(j\mathring H_\eta)^2= - h_\eta^2 I_2,
\end{align}
and separately summing even and odd terms, we obtain \eqref{4jul3}.

(b) Let us again use projective coordinates $\eta = \eta_1 / \eta_2$. By a direct calculation, $\binom{ \eta_1}{\eta_2}$ is an eigenvector of $j \mathring H_\eta$. Thus, it is an eigenvector of $M(t,z)$, so $M(t,z) \eta = \eta \in \overline{\bbC_+}$ in the projective sense. In other words $\eta$ is in each Weyl disk.  Since the canonical system is in the limit point case, this proves that $m(z) = \eta$.

(c) follows from \eqref{eq:CDKernelJForm} and \eqref{4jul3}, using $\mathring H_\eta j \mathring H_\eta =h_\eta^2j$.
\end{proof}

This explains the quantities $\mathring M_\eta$, $\mathring \cK_\eta$ from the introduction; they appear from canonical systems with constant trace-normalized Hamiltonians evaluated at $t = 1$.

\section{Continuity properties} \label{sectionContinuity}

The de Branges bijection, recalled in Theorem~\ref{deBrangesTheorem}, is central to our approach. To proceed, we have to define some metric spaces.

Let $X,Y$ be metric spaces; assume that $X$ is not compact but has compact subsets $\{S_n\}_{n=1}^\infty$ such that $S_n \subset \Int S_{n+1}$ and $\cup_{n\in\bbN} S_n = X$. Then the topology of locally uniform convergence on $C(X,Y)$ is metrizable, with one choice of metric given by
\[
d(f,g) =  \sum_{n=1}^\infty 2^{-n} \min \{ 1, \sup_{x \in S_n} d_Y( f(x), g(x)) \}.
\]

Let $\fM$ be the set of all analytic maps $\bbC_+ \to \overline{\bbC_+}$. Note that this is the set of Herglotz functions, enlarged by constants in $\bbR \cup \{\infty\}$. We view $\fM$ as a subset of $C(\bbC_+, \overline{\bbC_+} )$; this is the setting of Montel's theorem \cite[Chapter 6]{SimonBasicCompAna}, so $\fM$ is a compact metric space.

Let $\fH$ be the set of all trace-normalized Hamiltonians on $[0,\infty)$, with the topology of locally uniform convergence of their antiderivatives, i.e., the distance between Hamiltonians $H_1, H_2$ is given by
\[
d(H_1, H_2) =  \sum_{n=1}^\infty 2^{-n} \min \left\{ 1, \sup_{T \in [0,n]}  \left\lvert  \int_0^T H_1(t) dt -  \int_0^T H_2(t) dt \right\rvert \right\}.
\]

Let $\fS$ be the set of all solutions $M(t,z)$ of canonical systems with trace-normalized Hamiltonians, viewed as functions of $(t,z) \in [0,\infty) \times \bbC$, with the topology of locally uniform convergence, i.e., that inherited from $C( [0,\infty) \times \bbC, \Mat(2,\bbC) )$.

Let $\fK$ be the set of all kernels -- all functions $\cK_t(z,w)$, viewed as functions of $(t,z,w) \in [0,\infty) \times \bbC \times \bbC$, with the topology of locally uniform convergence, i.e., that inherited from $C( [0,\infty) \times \bbC \times \bbC, \Mat(2,\bbC) )$.

Let us now turn to continuity properties of the de Branges bijection between trace-normalized Hamiltonians and $m$-functions. 
\begin{theorem}\label{thm:Contin}
	Let $H,H_n$ be trace normalized Hamiltonians on $[0,\infty)$; $m,m_n$ the corresponding $m$-functions; $M_n,M$ the fundamental solutions; and $\{\cK^n_L\}_{L\geq0},\{\cK_L\}_{L\geq0}$ the associated kernels. 
Then the following conditions are equivalent:
	\begin{enumerate}[(i)]
		\item \label{it:ContinMC+}
		$m_n \to m$ in $\fM$;
		\item \label{it:ContinHamiltonian}
		$H_n \to H$ in $\fH$;
		\item \label{it:ContinFundamental}
		$M_n \to M$ in $\fS$;
		\item \label{it:ContinKernel}
		$\cK^n \to \cK$ in $\fK$.
	\end{enumerate}
\end{theorem}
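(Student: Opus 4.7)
My plan is to establish the equivalences via the cycle (ii)$\Rightarrow$(iii)$\Rightarrow$(iv)$\Rightarrow$(ii) together with (i)$\Leftrightarrow$(ii); the de Branges correspondence (Theorem~\ref{deBrangesTheorem}) enters only in the last step.

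The implication (ii)$\Rightarrow$(iii) is continuous dependence for the canonical system $j\partial_tM_n=-zH_nM_n$. In integral form $M_n(t,z)=I_2+z\int_0^tjH_n(s)M_n(s,z)\,ds$, using $\|H_n(s)\|\le \tr H_n(s)=1$ Lebesgue-a.e., Gronwall bounds $\|M_n(t,z)\|\le e^{|z|t}$ uniformly in $n$. The subtle point is that convergence of $H_n$ is only through antiderivatives (a ``weak'' convergence): to control $\int_0^t[H_n(s)-H(s)]M(s,z)\,ds$ I would integrate by parts using that $M(\cdot,z)$ is absolutely continuous with derivative $zjH(\cdot)M(\cdot,z)$ whose norm is locally bounded, so this inhomogeneous term vanishes in the limit. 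A standard Gronwall closure then yields $M_n\to M$ locally uniformly in $(t,z)$, which is (iii). The implication (iii)$\Rightarrow$(iv) follows from the matrix CD identity \eqref{eq:CDKernelJForm}: off the set $\{z=\overline w\}$ the kernel is an analytic function of the transfer matrix, and each $\cK_L^n$ is jointly continuous (even entire/anti-entire in $z,\overline w$), so local uniform convergence in $\fS$ off the diagonal combined with a normal families argument upgrades to convergence in $\fK$. Finally, (iv)$\Rightarrow$(ii) is immediate: $M_n(t,0)=I_2$ for every $n$, so setting $z=w=0$ in \eqref{kernelfromTdefn} gives $\cK^n_L(0,0)=\int_0^LH_n(t)\,dt$, which is exactly the quantity whose locally uniform convergence defines the metric on $\fH$.

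For the link between (i) and (ii), I first observe that $\fH$ is a compact metric space: the antiderivative $F_H(T):=\int_0^TH(t)\,dt$ identifies $\fH$ with a subset of matrix-valued maps $F:[0,\infty)\to\Mat(2,\bbC)$ that are monotone nondecreasing with $\tr F(T)=T$, hence Lipschitz with a uniform constant, so Arzel\`a--Ascoli gives compactness. Assuming (ii), the already proved Step (ii)$\Rightarrow$(iii) yields $M_n(L,z)\to M(L,z)$ locally uniformly, and the Weyl disks $D_n(L,z)=M_n(L,z)^{-1}(\overline{\bbC_+})$ converge in Hausdorff distance (with respect to the chordal metric on $\hat\bbC$) to $D(L,z)$ for each fixed $(L,z)$. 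Since $m_n(z)\in D_n(L,z)$ for every $L$ and $\fM$ is compact by Montel, any subsequential limit $\tilde m$ of $\{m_n\}$ in $\fM$ satisfies $\tilde m(z)\in D(L,z)$ for all $L\geq 0$; the limit point property $\bigcap_{L\geq 0}D(L,z)=\{m(z)\}$ forces $\tilde m(z)=m(z)$, proving (ii)$\Rightarrow$(i). Conversely, if (i) holds but some subsequence satisfied $H_{n_k}\to \tilde H\neq H$ in $\fH$, then applying (ii)$\Rightarrow$(i) along this subsequence would yield $m_{n_k}\to\tilde m$, the $m$-function of $\tilde H$; combined with $m_{n_k}\to m$ this would force $\tilde m=m$, contradicting the de Branges uniqueness in Theorem~\ref{deBrangesTheorem}.

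The main technical obstacle is the Hausdorff convergence of Weyl disks in the (ii)$\Rightarrow$(i) argument: because the disks $D_n(L,z)$ may contain or lie near $\infty$, one is forced to work on $\hat\bbC$ with the chordal metric, and because the sizes of the disks depend nonuniformly on $L$, $z$, and $n$, the argument naturally splits into two stages—first identifying subsequential limits of $m_n(z)$ pointwise via the nested shrinking disks, then bootstrapping to full convergence in $\fM$ via Montel compactness.
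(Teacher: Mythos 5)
Your proposal is correct, and where the paper actually writes out an argument it coincides with yours; the main difference is that you prove everything from scratch while the paper delegates the bulk of the work to the literature. Specifically, the paper's proof simply cites Eckhardt--Kostenko--Teschl and Remling's book for the equivalence of (i), (ii), (iii) (noting only that pointwise and locally uniform formulations agree by Arzel\`a--Ascoli), and then proves (iii)$\Leftrightarrow$(iv): for (iii)$\Rightarrow$(iv) it handles the diagonal singularity of $\cK^n_t(z,\overline w)=(M_n(t,\overline w)^*jM_n(t,z)-j)/(w-z)$ by the maximum principle, bounding the sup over $S_1$ by the sup over a large circle disjoint from $S_2$; your Vitali/normal-families upgrade from off-diagonal convergence is the same idea in different clothing. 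For the converse the paper uses (iv)$\Rightarrow$(iii) via $M_n(t,z)=I_2+zj\cK^n_t(z,0)$, whereas you close the cycle with (iv)$\Rightarrow$(ii) by evaluating at $z=w=0$; both are immediate. Your self-contained treatment of (i)$\Leftrightarrow$(ii) --- Gronwall with integration by parts against the antiderivatives for (ii)$\Rightarrow$(iii), Hausdorff convergence of nested Weyl disks for (ii)$\Rightarrow$(i), and compactness of $\fH$ plus de Branges uniqueness for (i)$\Rightarrow$(ii) --- is exactly the standard proof that the cited references contain, so it buys self-containedness at the cost of length; one should just make sure the compactness claim for $\fH$ includes closedness (a locally uniform limit of the antiderivatives is Lipschitz, hence absolutely continuous, with a.e.\ derivative positive semidefinite of unit trace), which your Arzel\`a--Ascoli remark implicitly requires.
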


\begin{proof}
The equivalence of \eqref{it:ContinHamiltonian}, \eqref{it:ContinFundamental}, and \eqref{it:ContinMC+} is well known and presented already in \cite{EckKostTeschl}, see also \cite[Chapter 5]{RemlingBookCanSys}.  Some of these references formulate (i), (ii) with pointwise convergence, but also note that this corresponds to the same topology by applications of the Arzel\`a--Ascoli theorem. Thus, it remains to show the equivalence of \eqref{it:ContinFundamental} and \eqref{it:ContinKernel}.

\eqref{it:ContinFundamental} $\implies$ \eqref{it:ContinKernel}:  It suffices to prove uniform convergence
\begin{equation} \label{locallyuniformconvcK}
\sup_{(t,z,w) \in S} \lvert \cK^n_t(z, \overline w) - \cK_t(z,\overline w) \rvert \to 0, \qquad n\to\infty
\end{equation}
on compacts of the form $S = [0,T] \times S_1 \times S_2 \subset [0,\infty) \times \bbC \times \bbC$.

Consider first a special case: if $S_1\cap S_2=\emptyset$, \eqref{locallyuniformconvcK} follows immediately from
\[
\cK^n_t(z,\overline w)=\frac{M_n(t,\overline{w})^*jM_n(t,z)-j}{ w-z}
\]
by locally uniform convergence  $M_n \to M$.

Now assume that $S_1\cap S_2\neq \emptyset$. Choose $R>0$ such that $S_1 \cup S_2 \subset B_R(0)$ and consider $\tilde S_1=\partial B_{2R}(0)$.  Since the kernels are entire functions of $z$, we obtain by the maximum principle that 
\[
\sup_{z \in S_1}\left|\cK^n_t(z,\overline w)-\cK_t(z,\overline w)\right| \le \sup_{z \in \tilde S_1}\left|\cK^n_t(z,\overline w)-\cK_t(z,\overline w)\right|
\]
Apply to this inequality $\sup_{t\in [0,T]} \sup_{w \in S_2}$. Then the right-hand side goes to $0$ as $n\to\infty$ by the special case noted above, because $\tilde S_1 \cap S_2 = \emptyset$. Thus, \eqref{locallyuniformconvcK} holds in general.

 \eqref{it:ContinKernel} $\implies$ \eqref{it:ContinFundamental}: This follows from $M_n(t,z)=I_2+zj\cK^n_t(z,0)$. 
\end{proof}

In other words, the correspondences between $\fM,\fH, \fS, \fK$ are homeomorphisms. Since $\fM$ is a compact metric space, so are $\fH, \fS, \fK$.

\section{Universality from normal limits of $m$} \label{sectionLimitm}

In this section we prove Theorem~\ref{thmuniformnormallimit} and its orthogonal polynomial consequences. The key will be to consider a family of trace-normalized canonical systems parametrized by $(\xi, r) \in J \times [1,\infty)$ and investigate  whether  this family has a continuous extension to $J \times [1,\infty]$. Here $[1,\infty]$ is used with the metric inherited from, say, the Riemann sphere $\hat\bbC$; it is used as a compact interval.  An abstract metric space argument relates existence of a continuous extension to uniform convergence:

\begin{lemma}
Let $F: J \times [1,\infty] \to Y$ be a map to some metric space $Y$, such that the restrictions $F\rvert_{ J \times [1,\infty) }$ and  $F\rvert_{ J \times \{\infty\} }$ are continuous. Then $F$ is continuous if and only if $F(\xi, r) \to F(\xi, \infty)$ as $r \to \infty$ uniformly in $\xi$.
\end{lemma}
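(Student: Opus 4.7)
The plan is to treat this as a standard metric space exercise along the lines of the Moore--Osgood theorem, exploiting the compactness of $J$.

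For the forward direction, suppose $F$ is continuous on $J \times [1,\infty]$. Since $J$ is a compact subset of $\bbR$ and $[1,\infty]$ is a compact interval in the Riemann sphere metric, $J \times [1,\infty]$ is compact, so $F$ is uniformly continuous. Then for any $\e > 0$ there exists $\d > 0$ such that $d_{\hat\bbC}(r, \infty) < \d$ implies $d_Y(F(\xi,r), F(\xi,\infty)) < \e$ for all $\xi \in J$ simultaneously; this is exactly the uniform convergence statement.

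For the converse, continuity of $F$ on $J \times [1,\infty)$ is already assumed, so the only nontrivial task is to verify continuity of $F$ at points $(\xi_0, \infty)$ with $\xi_0 \in J$. Fix such a $\xi_0$ and fix $\e > 0$. The triangle inequality gives
\[
d_Y(F(\xi,r), F(\xi_0,\infty)) \le d_Y(F(\xi,r), F(\xi,\infty)) + d_Y(F(\xi,\infty), F(\xi_0,\infty)).
\]
By the uniform convergence hypothesis, choose $R$ so large that the first term is less than $\e/2$ for all $\xi \in J$ whenever $r > R$. By continuity of $F\rvert_{J \times \{\infty\}}$ at $\xi_0$, choose a neighborhood $U$ of $\xi_0$ in $J$ so that the second term is less than $\e/2$ for all $\xi \in U$. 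For $(\xi,r)$ in $U \times (R,\infty]$ both terms are controlled, so $F$ is continuous at $(\xi_0,\infty)$.

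Neither direction presents any real obstacle; the argument is purely soft. The only point requiring a little care is to note that the converse argument only needs to control neighborhoods of points of the form $(\xi_0,\infty)$, since continuity on $J \times [1,\infty)$ is built into the hypotheses --- this is what makes a single uniform convergence statement sufficient rather than requiring, say, equicontinuity of the slices $r \mapsto F(\cdot, r)$.
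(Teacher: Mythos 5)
Your proof is correct and follows essentially the same route as the paper: uniform continuity on the compact product for the forward direction, and the same triangle-inequality splitting (uniform convergence plus continuity of the slice at infinity) for the converse, just phrased with neighborhoods instead of sequences. No issues.
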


\begin{proof}
If $F$ is continuous, then it is uniformly continuous on the compact (in $\bbR\times\hat{\bbC}$) $J \times [1,\infty]$. This uniform continuity implies uniform convergence $F(\cdot, r) \to F(\cdot, \infty)$ as $r \to \infty$.

For the converse, it suffices to prove continuity of $F$ at points $(\xi, \infty)$  by proving convergence along sequences $(\xi_n, r_n) \to (\xi,\infty)$. Note
\[
d_Y ( F(\xi_n ,r_n), F(\xi,\infty) ) \leq d_Y ( F(\xi_n,r_n), F(\xi_n,\infty)) + d_Y ( F(\xi_n,\infty), F(\xi,\infty) )
\]
The first term is small for large $n$ by uniform convergence (even if $r_n=\infty$) and the second is small for large $n$ by continuity on $J \times \{\infty\}$. 
\end{proof}

The family of canonical systems which we wish to consider can be described by their $m$-functions. Starting from a function $m\in \fM$ (which will later be chosen to be the function from the statement of Theorem~\ref{thmuniformnormallimit}), we define 
\begin{equation}\label{8jul1}
m_{\xi,r}(z) = m(\xi + z/r), \qquad (\xi,r) \in J \times [1,\infty)
\end{equation}
and view this as a map $J \times [1,\infty) \to \fM$. The first observation is that if this map has a continuous extension, that extension inevitably has constant functions at $r =\infty$:

\begin{lemma} \label{lemmanormallimitconstant}
Fix $m \in \fM$ and $\xi \in \bbR$ and consider the functions $m_r(z) = m( \xi + z / r)$ for $r \in [1,\infty)$. If the functions $m_r$ converge in $\fM$ as $r\to \infty$, the limit is a constant function, i.e., $m_r \to \eta$ for some $\eta \in \overline{\bbC_+}$.
\end{lemma}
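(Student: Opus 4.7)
The plan is to exploit the scaling structure built into the definition $m_r(z) = m(\xi + z/r)$. For any fixed $\lambda > 0$, the simple substitution $r \mapsto r/\lambda$ gives the identity
\[
m_{r/\lambda}(z) = m\bigl(\xi + \tfrac{\lambda z}{r}\bigr) = m_r(\lambda z),
\]
valid for every $z \in \bbC_+$ and every $r \ge \lambda$. This identity is the only structural input needed.

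Next I would take the limit $r \to \infty$ in both expressions and invoke the assumed convergence $m_r \to \tilde m$ in $\fM$. Since convergence in $\fM$ is locally uniform on $\bbC_+$, the left side tends to $\tilde m(z)$, while the right side tends to $\tilde m(\lambda z)$ (locally uniform convergence on $\bbC_+$ is preserved under the continuous substitution $z \mapsto \lambda z$ which sends compacts to compacts). Hence
\[
\tilde m(\lambda z) = \tilde m(z), \qquad \forall\, \lambda > 0,\ z \in \bbC_+.
\]

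Finally I would argue that an element of $\fM$ with this scaling invariance must be constant. Fix $z_0 \in \bbC_+$; then $\lambda \mapsto \tilde m(\lambda z_0)$ is constant on the ray $\lambda \in (0,\infty)$. This map extends to an analytic (or meromorphic, if $\tilde m \equiv \infty$) function on the sector of $\lambda \in \bbC$ with $\lambda z_0 \in \bbC_+$, and by the identity theorem (applied either to $\tilde m$ itself or to $1/\tilde m$ in the case where $\tilde m$ hits $\infty$) it is constant on the full sector. Varying $z_0$ over $\bbC_+$, the union of these sectors covers all of $\bbC_+$, so $\tilde m$ is a constant $\eta$. Because $\tilde m$ takes values in $\overline{\bbC_+}$, we have $\eta \in \overline{\bbC_+}$, as claimed.

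The only subtle point is the need to treat the value $\infty$ (i.e.\ the case where $\tilde m$ is the constant $\infty$) on equal footing with $\bbR \cup \bbC_+$; this is handled by working on the Riemann sphere, consistently with the convention that $\fM$ was enlarged to include constants in $\bbR \cup \{\infty\}$. Everything else is a direct computation.
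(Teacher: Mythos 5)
Your proof is correct and is essentially the paper's argument: the identity $m_{r/\lambda}(z)=m_r(\lambda z)$ is the same change of variables ($\epsilon = y/r$) that the paper uses along the imaginary axis to show the limit is dilation-invariant, and both proofs then finish by observing that an element of $\fM$ constant on a ray (or on $i(0,\infty)$) is constant. Your sector detour for the identity-theorem step is unnecessary but harmless, since the ray $\{\lambda z_0\}$ already accumulates in $\bbC_+$.
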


\begin{proof}
Without loss of generality assume $\xi = 0$. Denote by $f \in \fM$ the limit of $m_{\xi,r}$ as $r \to \infty$. Then for any $y > 0$,
\[
f(iy) = \lim_{r\to\infty} m(iy/r) = \lim_{\epsilon \downarrow 0} m(i\epsilon),
\]
by the change of variables $\epsilon = y/r$. This implies that $f(iy)$ is independent of $y$, so $f$ is constant.
\end{proof}

We note that Lemma~\ref{lemmanormallimitconstant} would not be true sequentially: there exist $m\in \fM$ and sequences $r_n \to \infty$ such that $m_{r_n}$ converges in $\fM$ to a nonconstant Herglotz function. For example, using the branch of $\ln$ on the right half-plane with $\lvert \Im \ln \rvert < \pi/ 2$, we can define for $z\in \bbC_+$
\[
m(z) = i \exp\left( \frac{\pi/2}{e^{\pi/2}} \sin \ln (-iz) \right).
\]
This is a Herglotz function because $\sin$ maps the region $\lvert \Im  \rvert < \pi/2$ into the region $\lvert \Im \rvert < e^{\pi/2}$. Moreover, since $m(z/ e^{2\pi}) = m(z)$, the family of rescalings $m(z/r)$ is compact and indexed by $\log r \in \bbR / (2\pi \bbZ)$. Thus, any subsequential limit of $m(\cdot / r_n)$ in $\fM$ with $r_n \to \infty$ is of the form $m(z / r)$ for some $r \in [1,e^{2\pi})$.

Next, we interpret the existence of a continuous extension for the family \eqref{8jul1} in terms of normal limits and nontangential limits of $m$:

\begin{lemma} 
Let $m\in \fM$ and let $\eta: J \to \overline{\bbC_+}$ be a continuous function. The following are equivalent:
\begin{enumerate}[(a)]
\item The map $J \times [1,\infty] \to \fM$ given by 
\begin{equation} \label{mfunctionscontfamily}
m_{\xi,r}(z) = \begin{cases} m( \xi + z/r) & r \in [1,\infty) \\
\eta(\xi) & r =\infty
\end{cases}
\end{equation}
 is continuous (with respect to the metric on $\fM$).
\item (uniform nontangential limits) For each $\alpha \in (0,\pi/2]$,  uniformly in $\xi \in J$, $m(\xi+z) \to \eta(\xi)$ as $z \to 0$ with $\alpha \le \arg z \le \pi - \alpha$.
\item (uniform normal limits) $m(\xi+iy) \to\eta(\xi)$ as $y\downarrow 0$, uniformly in $\xi \in J$.
\end{enumerate}
\end{lemma}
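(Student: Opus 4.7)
My plan is to establish the cycle (a) $\Rightarrow$ (b) $\Rightarrow$ (c) $\Rightarrow$ (a). The middle step (b) $\Rightarrow$ (c) is immediate, since specializing $\alpha = \pi/2$ in (b) restricts $z$ to the positive imaginary ray and recovers (c) verbatim.

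For (a) $\Rightarrow$ (b), I would argue by contradiction. Suppose (b) fails for some $\alpha \in (0, \pi/2]$: there exist $\epsilon > 0$, $\xi_n \in J$, and $z_n \in \bbC_+$ with $|z_n| \to 0$ and $\alpha \le \arg z_n \le \pi - \alpha$ such that the chordal distance between $m(\xi_n + z_n)$ and $\eta(\xi_n)$ is at least $\epsilon$. After passing to a subsequence using compactness of $J$, we may assume $\xi_n \to \xi_0 \in J$. Writing $z_n = u_n/r_n$ with $r_n = 1/|z_n| \to \infty$ and $u_n = z_n/|z_n|$ lying in the compact arc $K = \{e^{i\theta} : \alpha \le \theta \le \pi - \alpha\} \subset \bbC_+$, continuity of $(\xi, r) \mapsto m_{\xi,r}$ at $(\xi_0, \infty)$ forces $m_{\xi_n, r_n} \to \eta(\xi_0)$ in $\fM$; this is locally uniform convergence on $\bbC_+$, hence uniform on $K$, so $m(\xi_n + z_n) = m_{\xi_n, r_n}(u_n) \to \eta(\xi_0)$. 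Combined with $\eta(\xi_n) \to \eta(\xi_0)$ from continuity of $\eta$, this contradicts the lower bound.

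The substantive direction is (c) $\Rightarrow$ (a). Continuity of $(\xi, r) \mapsto m_{\xi, r}$ at points $(\xi_0, r_0)$ with $r_0 < \infty$ is routine: pointwise convergence $m(\xi_n + z/r_n) \to m(\xi_0 + z/r_0)$ on $\bbC_+$, combined with the Vitali--Porter theorem applied in the normal family $\fM$, upgrades to convergence in $\fM$. At $(\xi_0, \infty)$, fix any $(\xi_n, r_n) \to (\xi_0, \infty)$; by compactness of $\fM$, it suffices to show that every subsequential $\fM$-limit $f$ of $m_{\xi_n, r_n}$ equals the constant function $\eta(\xi_0)$. For each fixed $y > 0$, $f(iy) = \lim_k m(\xi_{n_k} + iy/r_{n_k})$; since $y/r_{n_k} \downarrow 0$ and $\xi_{n_k} \in J$, hypothesis (c) combined with continuity of $\eta$ forces $f(iy) = \eta(\xi_0)$ for every $y > 0$. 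The only delicate point is then the identity-theorem step extending this to $f \equiv \eta(\xi_0)$ on all of $\bbC_+$: when $\eta(\xi_0) \neq \infty$ this is the usual identity theorem applied to $f - \eta(\xi_0)$; when $\eta(\xi_0) = \infty$ one should view $f \in \fM$ as a meromorphic function $\bbC_+ \to \hat\bbC$ and note that $f$ cannot have poles on an accumulating set unless it is identically $\infty$ (equivalently, $1/f$ vanishes on the positive imaginary axis and is identically zero).
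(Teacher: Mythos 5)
Your proof is correct and follows essentially the same route as the paper's: the cycle (a) $\Rightarrow$ (b) $\Rightarrow$ (c) $\Rightarrow$ (a), with (a) $\Rightarrow$ (b) resting on compactness of the arc $\{e^{i\theta} : \alpha \le \theta \le \pi-\alpha\}$ together with convergence in $\fM$, and (c) $\Rightarrow$ (a) resting on compactness of $\fM$ and identification of every subsequential limit by its values on the positive imaginary axis. The only cosmetic differences are that you phrase (a) $\Rightarrow$ (b) as a contradiction/subsequence argument where the paper argues directly from uniform continuity on the compact arc, and that you spell out the identity-theorem step (including the case $\eta(\xi_0)=\infty$) that the paper leaves implicit.
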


\begin{proof}
(a) $\implies$ (b): For any $\alpha > 0$, the set $S = \{ \zeta = e^{i\phi} \mid  \phi \in [\alpha,\pi - \alpha] \} \subset \bbC_+$ is compact. By uniform continuity of $m_{\xi,r}(\zeta)$ in $\zeta \in S$, letting $r \to \infty$ with $z = \zeta / r$, it follows with $m(\xi + z) \to \eta(\xi)$ uniformly in $\xi$.

(b) $\implies$ (c): is trivial, by taking $\alpha = \pi/2$.

(c) $\implies$ (a): it is obvious that $m_{\xi,r}$ is continuous on $J \times [1,\infty)$ and on $J \times \{\infty\}$. Thus, it suffices to prove convergence along sequences $(\xi_n, r_n) \to (\xi, \infty)$ with $r_n \neq \infty$ for all $n$.

Assume first that such a sequence is chosen so that $m_{\xi_n ,r_n} \to f \in \fM$. Evaluating, in particular, at $\zeta = iy$, $y > 0$, we get
\[
m(\xi_n + iy / r_n) \to f(iy), \quad n\to\infty.
\]
Meanwhile, (c) implies that $m(\xi_n + iy / r_n) \to \eta(\xi)$, so we conclude $f(iy) = \eta(\xi)$ for all $y > 0$. This determines uniquely the Herglotz function $f$ as $f \equiv \eta(\xi)$. Thus, the constant Herglotz function $\eta(\xi)$ is the unique accumulation point along sequences $(\xi_n, r_n) \to (\xi, \infty)$ with $r_n \neq \infty$ for all $n$; by compactness of $\fM$, this shows that $m_{\xi_n, r_n} \to \eta(\xi)$ in $\fM$ as $n \to\infty$, which gives the desired continuity.
\end{proof}

Let us consider trace-normalized canonical systems corresponding to $m_{\xi,r}$ and denote the Hamiltonians, solutions, matrix CD kernels, and scalar CD kernels by  $ H_{\xi,r},  M_{\xi,r},  \cK_{\xi,r},  K_{\xi,r}$, respectively. Since $m_{\xi,1}$ is constructed from $m$ by a shift of the spectral parameter, combining the shift by $\xi$ with the gauge transformation \eqref{3jul3} and a trace-parametrization (Lemma~\ref{lem:Reparam}), it follows that
\begin{align}
\int_0^t H_{\xi,1}(\tau)\,d\tau & = \cK_L(\xi,\xi) \label{27jul1a} \\
M_{\xi,1} (t,z) & = T(L,\xi)^{-1} T(L,\xi+ z) \label{27jul1b} \\
 (\cK_{\xi,1})_t(z,w) & = \cK_L(\xi+z,\xi+w) \label{27jul1c}
\end{align}
where  the trace-normalized parameter $t$ is a monotone increasing function of $L$. By taking traces of both sides of \eqref{27jul1a}, we see that 
\[
t = \tau_\xi(L),
\]
with $\tau_\xi(L) = \cK_L(\xi,\xi)$ as defined by \eqref{universalityscale}.

Next, we note how scaling in $r \in [1,\infty)$ affects the canonical systems:

\begin{lemma}\label{lem:rInvariance}
The effect of the scaling by $r$ in \eqref{8jul1} is as follows: the Hamiltonian scales as
\begin{equation}\label{Hxirstretch}
H_{\xi,r}(t)=H_{\xi,1}(rt),
\end{equation}
the fundamental solution is given by
\begin{equation}\label{Mxirstretch}
M_{\xi,r}(t,z)=M_{\xi,1}(rt,z/r),
\end{equation}
and the corresponding kernel $(\cK_{\xi,r})_t$ is given by
\begin{equation}\label{Kxirstretch}
(\cK_{\xi,r})_{t}(z,w)=\frac{1}{r}(\cK_{\xi,1})_{rt}(z/r,w/r).
\end{equation}
\end{lemma}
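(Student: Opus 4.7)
The statement is a pure scaling calculation: shifting the spectral parameter in $m$ by $\xi$ and then rescaling by $1/r$ should correspond on the canonical-system side to dilating the $t$-axis by $r$. The cleanest route is to take \eqref{Hxirstretch} and \eqref{Mxirstretch} as a guess, verify that the guessed $M_{\xi,1}(rt,z/r)$ solves a trace-normalized canonical system whose $m$-function is exactly $m_{\xi,r}$, and then invoke the uniqueness part of the de Branges correspondence (Theorem~\ref{deBrangesTheorem}) to deduce that it must equal $M_{\xi,r}$. The formula \eqref{Kxirstretch} will follow from the Christoffel--Darboux identity \eqref{eq:CDKernelJForm}.

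\textbf{Step 1: verify the guessed fundamental solution.} Set
\[
\tilde M(t,z) := M_{\xi,1}(rt, z/r), \qquad \tilde H(t) := H_{\xi,1}(rt).
\]
The chain rule gives
\[
j\partial_t \tilde M(t,z) = r\,\bigl(j\partial_\sigma M_{\xi,1}(\sigma,z/r)\bigr)\big|_{\sigma=rt} = r\cdot\Bigl(-\tfrac{z}{r}\,H_{\xi,1}(rt)\Bigr)M_{\xi,1}(rt,z/r) = -z\tilde H(t)\tilde M(t,z),
\]
with $\tilde M(0,z) = I_2$. Thus $\tilde M$ solves the canonical system \eqref{eq:genCanonical} with Hamiltonian $\tilde H$, and $\tilde H$ is trace-normalized because $\tr H_{\xi,1}\equiv 1$ Lebesgue-a.e.

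\textbf{Step 2: identify the $m$-function of $\tilde M$.} Since Weyl disks are defined by \eqref{WeylIntersection}, and since in projective coordinates the Weyl disk of $\tilde M$ at parameter $t$ and spectral point $z$ coincides with that of $M_{\xi,1}$ at parameter $rt$ and spectral point $z/r$, sending $t\to\infty$ (and hence $rt\to\infty$) gives $\tilde m(z) = m_{\xi,1}(z/r) = m(\xi+z/r) = m_{\xi,r}(z)$. Trace normalization together with \eqref{limitpointconditioncanonicalsystem} places $\tilde H$ in the class covered by the de Branges bijection, so the uniqueness in Theorem~\ref{deBrangesTheorem} forces $\tilde H = H_{\xi,r}$ a.e.\ and consequently $\tilde M = M_{\xi,r}$, which is exactly \eqref{Hxirstretch}--\eqref{Mxirstretch}.

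\textbf{Step 3: the kernel identity.} Apply the Christoffel--Darboux formula \eqref{eq:CDKernelJForm} at scale $r$ and substitute \eqref{Mxirstretch}:
\[
(\cK_{\xi,r})_t(z,w) = \frac{M_{\xi,r}(t,w)^* j M_{\xi,r}(t,z) - j}{\overline w - z} = \frac{M_{\xi,1}(rt, w/r)^* j M_{\xi,1}(rt, z/r) - j}{\overline w - z}.
\]
Multiplying and dividing the right side by $r$, and recognizing the numerator as $(\cK_{\xi,1})_{rt}(z/r, w/r) \cdot (\overline{w/r} - z/r)$ via \eqref{eq:CDKernelJForm} applied to $M_{\xi,1}$, yields \eqref{Kxirstretch}.

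\textbf{Expected obstacle.} None of the steps is technically deep; the only point where one has to be careful is Step~2, namely that the Weyl $m$-function of the rescaled canonical system really is $m(\xi+z/r)$. Once that identification is made, the de Branges uniqueness theorem does all the work, and the kernel identity is a one-line consequence of \eqref{eq:CDKernelJForm}.
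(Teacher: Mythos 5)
Your proposal is correct and follows essentially the same route as the paper: guess the dilated Hamiltonian $H_{\xi,1}(rt)$, check that its fundamental solution and $m$-function scale as claimed (the paper outsources this direct computation to a citation, which you write out via the chain rule and the Weyl-disk characterization), invoke the de Branges bijection to identify it with $H_{\xi,r}$, and obtain \eqref{Kxirstretch} from the Christoffel--Darboux identity \eqref{eq:CDKernelJForm} exactly as the paper does.
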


\begin{proof}
If we had taken \eqref{Hxirstretch} as the definition for arbitrary $r$, the scaling \eqref{Mxirstretch} and the scaling
\[
m_{\xi,r}(z)=m_{\xi,1}(z/r),\quad z\in\bbC_+
\]
would follow by direct computation, see \cite[Lemma 2.7]{EckKostTeschl}. In particular, this matches our definition \eqref{8jul1}, so by bijectivity of the correspondences, this proves \eqref{Hxirstretch} and \eqref{Mxirstretch}. We also have 
\begin{align*}
	(\cK_{\xi,r})_{t}(z,w) &=\frac{M_{\xi,r}(t,w)^*jM_{\xi,r}(t,z)-j}{\overline{w}-z}\\
	&=\frac{1}{r}\frac{M_{\xi,1}(rt,w/r)^*jM_{\xi,1}(rt,z/r)-j}{\overline{w/r}-z/r}=\frac{1}{r} (\cK_{\xi,1})_{rt}(z/r,w/r). \qedhere
\end{align*}
\end{proof}

Due to the homeomorphisms provided by Theorem~\ref{thm:Contin}, the existence of a continuous extension of the family $\{ m_{\xi,r} \}$ can equivalently be checked at the level of the associated objects $H_{\xi,r}, M_{\xi,r}, \cK_{\xi,r}$. Moreover, by Lemma \ref{lemmanormallimitconstant} any continuous extension must correspond to constant Hamiltonian canonical systems at $r = \infty$.  

If $m_{\xi,\infty}(z) = \eta(\xi)$, then applying scaling formulas to the constant Hamiltonian canonical systems from Example~\ref{xmplconsthamiltonian} and noting that $\mathring H, \mathring M, \mathring \cK$ correspond to evaluation at $t=1$, we obtain 
\[
H_{\xi,\infty} (t) = \mathring H_{\eta(\xi)}, \quad M_{\xi,\infty}(t,z) = \mathring M_{\eta(\xi)}(tz), \quad (\cK_{\xi,\infty})_t(z,w) = t \mathring \cK_{\eta(\xi)}(tz,tw).
\]

The following lemma characterizes the possibility of continuous extensions in terms of Hamiltonians: 

\begin{lemma}
The family  of trace-normalized Hamiltonians $H_{\xi,r}$ parametrized by $(\xi,r) \in J \times [1,\infty)$ has a continuous extension to $J \times [1,\infty]$ with constant Hamiltonians $H_{\xi,\infty}(t) = \mathring H_{\eta(\xi)}$ if and only if
\begin{equation} \label{8jul4}
\lim_{\tau \to \infty} \frac 1\tau \int_0^\tau H_{\xi,1}(t) dt = \mathring H_{\eta(\xi)}
\end{equation}
uniformly in $\xi \in J$.
\end{lemma}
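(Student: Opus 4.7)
The approach is to translate the continuous extension condition into uniform convergence of antiderivatives, then use the scaling identity \eqref{Hxirstretch} to rewrite the relevant antiderivatives as Ces\`aro averages of $H_{\xi,1}$.

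First, by the definition of the metric on $\fH$, convergence in $\fH$ is equivalent to locally uniform convergence of the antiderivatives. Invoking the abstract lemma at the beginning of Section~4 with $X = J$ and $Y = \fH$, a continuous extension of the family $(\xi,r) \mapsto H_{\xi,r}$ to $J \times [1,\infty]$ with boundary value $H_{\xi,\infty}(t) = \mathring H_{\eta(\xi)}$ exists if and only if:
(a) the restriction to $J \times [1,\infty)$ is continuous, which follows from Theorem~\ref{thm:Contin} applied to the jointly continuous family $m_{\xi,r}(z) = m(\xi + z/r)$;
(b) the restriction to $J \times \{\infty\}$ is continuous, which follows from continuity of $\eta$ and of $\eta \mapsto \mathring H_\eta$;
(c) for each $T > 0$,
\[
\sup_{\xi \in J}\sup_{t \in [0,T]} \left\lvert \int_0^t H_{\xi,r}(s)\,ds - t\,\mathring H_{\eta(\xi)} \right\rvert \longrightarrow 0, \qquad r \to \infty.
\]
So the theorem reduces to the equivalence of (c) with \eqref{8jul4}.

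Next, apply the scaling \eqref{Hxirstretch} to get
\[
\int_0^t H_{\xi,r}(s)\,ds = \frac{1}{r}\int_0^{rt} H_{\xi,1}(\sigma)\,d\sigma,
\]
so for $t > 0$,
\[
\int_0^t H_{\xi,r}(s)\,ds - t\,\mathring H_{\eta(\xi)} = t\,F(\xi, rt), \qquad F(\xi,\tau) := \frac{1}{\tau}\int_0^\tau H_{\xi,1}(s)\,ds - \mathring H_{\eta(\xi)}.
\]
The direction \eqref{8jul4} $\Rightarrow$ (c) then proceeds by splitting the supremum over $t \in [0,T]$ at a small parameter $\delta$. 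Because $H_{\xi,1}$ is trace-normalized and positive semidefinite, its entries are bounded uniformly, so $|F(\xi,\tau)|$ is uniformly bounded by some absolute constant $C$; hence the contribution from $t \in [0,\delta]$ is at most $\delta C$, which is made small by choosing $\delta$ small. On the complementary interval $t \in [\delta,T]$, one has $rt \geq r\delta$, so the Ces\`aro hypothesis \eqref{8jul4} gives $\sup_\xi |F(\xi,rt)|$ small for $r$ large, and the factor $t$ is controlled by $T$. The converse (c) $\Rightarrow$ \eqref{8jul4} is immediate: simply specialize to $T = t = 1$, so that $\sup_\xi |F(\xi,r)| \to 0$ as $r \to \infty$, which is \eqref{8jul4} with $\tau = r$.

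There is no serious obstacle here; the only mild subtlety is handling $t$ near $0$ in the $\Rightarrow$ direction, which is resolved by the uniform pointwise bound on $H_{\xi,1}$ coming from its positivity and unit trace.
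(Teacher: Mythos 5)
Your proof is correct and follows essentially the same route as the paper: both rest on the scaling identity \eqref{Hxirstretch}, which turns the antiderivative of $H_{\xi,r}$ into the Ces\`aro average $\frac{1}{rT}\int_0^{rT}H_{\xi,1}(t)\,dt$, with the easy direction obtained by evaluating at $T=1$. The only difference is in closing the converse: the paper invokes compactness of $\fH$ to upgrade convergence of the antiderivatives at each fixed $T$ to continuity of the map into $\fH$, whereas you give a direct estimate by splitting $t\in[0,T]$ at a small $\delta$ and using the uniform entrywise bound on trace-normalized positive Hamiltonians; both are valid.
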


\begin{proof}
For any $T \ge 0$, by the formula \eqref{Hxirstretch},
\begin{equation}\label{8jul5}
\frac 1T \int_0^T H_{\xi, r}(t) dt = \frac 1T \int_0^T H_{\xi, 1}(rt) dt = \frac 1{rT} \int_0^{rT} H_{\xi, 1}(t) dt. 
\end{equation}
If $H_{\xi,r} \to \mathring H_{\eta(\xi)}$ in $\fH$ as $r \to \infty$, using $T = 1$ and $r = \tau  \to \infty$ in \eqref{8jul5} gives the uniform convergence in \eqref{8jul4}. Conversely, if \eqref{8jul4} holds uniformly in $\xi$, then for any $T > 0$, the integral \eqref{8jul5} extends continuously to $(\xi,r) \in J \times [1,\infty]$; by compactness of $\fH$, this implies continuity of the map $J \times [1,\infty] \to \fH$.
\end{proof}

By similar arguments, the scalings \eqref{Mxirstretch}, \eqref{Kxirstretch} imply the following lemmas:

\begin{lemma}
This family of solutions of canonical systems $M_{\xi,r}(t,z)$ has a continuous extension to a map $J \times [1,\infty] \to \fS$ with functions $M_{\xi,\infty}(t,z) = \mathring M_{\eta(\xi)}(tz)$ if and only if
\begin{equation}\label{8jul7}
\lim_{\tau \to \infty}  M_{\xi,1} (\tau , z/\tau) = \mathring M_{\eta(\xi)}(z)
\end{equation}
uniformly on compact subsets of $(\xi, z) \in J \times \bbC$.
\end{lemma}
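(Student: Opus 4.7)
The plan is to mirror the preceding lemma (for Hamiltonians), with the scaling identity \eqref{Mxirstretch}, namely $M_{\xi,r}(t,z) = M_{\xi,1}(rt, z/r)$, playing the role of \eqref{Hxirstretch}. Specializing this identity to $t=1$ produces exactly the expression $M_{\xi,1}(r, z/r)$ appearing in \eqref{8jul7}, which is the bridge between the two conditions.

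For the forward direction, I would restrict the continuous extension to the slice $t=1$: uniform continuity of $(\xi,r)\mapsto M_{\xi,r}$ on the compact $J\times[1,\infty]$ (with values in $\fS$) forces $M_{\xi,r}(1,z)\to M_{\xi,\infty}(1,z) = \mathring M_{\eta(\xi)}(z)$ uniformly in $\xi\in J$ and locally uniformly in $z\in\bbC$. Applying \eqref{Mxirstretch} with $\tau=r$ yields \eqref{8jul7}.

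For the converse, continuity on $J\times[1,\infty)$ is automatic from the continuous dependence of the canonical system's solution on its parameters (equivalently, from Theorem \ref{thm:Contin} combined with continuity of $(\xi,r)\mapsto m_{\xi,r}$ as a map into $\fM$). The work is in extending continuity to $r=\infty$. For $t$ bounded away from $0$ I would substitute $\tau:=rt$ and $z':=tz$ in \eqref{Mxirstretch}, rewriting
\[
M_{\xi,r}(t,z) = M_{\xi,1}(rt, z/r) = M_{\xi,1}(\tau, z'/\tau);
\]
as $r\to\infty$ with $t\in[\delta,T]$ and $(t,z)$ in a compact set, $\tau\to\infty$ uniformly and $z'$ stays in a compact set, so \eqref{8jul7} applied with argument $z'$ delivers uniform convergence to $\mathring M_{\eta(\xi)}(z') = \mathring M_{\eta(\xi)}(tz)$.

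The main obstacle is the regime $t$ near $0$, where the substitution does not force $\tau\to\infty$. Here the plan is a uniform equicontinuity estimate at the initial time: since every trace-normalized Hamiltonian satisfies $\|H(s)\|\le \tr H(s)=1$, a Gr\"onwall argument applied to \eqref{eq:genCanonical} yields $\|M(t,z)-I_2\|\le |z|\, t\, e^{|z|t}$ uniformly over all such Hamiltonians. Applying this to both $H_{\xi,r}$ and the constant Hamiltonian $\mathring H_{\eta(\xi)}$ shows that $M_{\xi,r}(t,z)$ and $\mathring M_{\eta(\xi)}(tz)$ are each within $o(1)$ of $I_2$ on $[0,\delta]\times K$ as $\delta\downarrow 0$, uniformly in $\xi$ and $r$. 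Combining this with the previous paragraph on $[\delta,T]\times K$ closes the gap and produces the desired continuous extension on $J\times[1,\infty]$.
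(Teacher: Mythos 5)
Your proof is correct and follows the paper's route: the forward direction by evaluating the extension at $t=1$ and using \eqref{Mxirstretch} with $\tau=r$, and the converse via the same rewriting $M_{\xi,r}(t,z)=M_{\xi,1}(rt,(tz)/(rt))$. The only divergence is at the very end, where the paper passes from pointwise convergence for $t>0$ to continuity of the extension by invoking compactness of $\fS$, while you instead control the regime of small $t$ with an explicit Gr\"onwall equicontinuity bound; both finishes are valid.
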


\begin{proof}
If $M_{\xi,r} \to M_{\xi,\infty}$ in $\fS$ uniformly in $\xi$, evaluating at $t = 1$ and using $\tau = r$ gives \eqref{8jul7}. Conversely, if \eqref{8jul7} holds, then for each $t >0$, $z \in \bbC$, the function
\[
M_{\xi,r}(t,z) = M_{\xi,1}(rt, z/ r) = M_{\xi,1} ( rt , (tz) / (rt) )
\]
extends continuously to $J \times [1,\infty]$ with values $M_{\xi,\infty} (t,z) = \mathring M_{\eta(\xi)}(tz)$. Thus, by compactness of $\fS$, $M_{\xi,r}$ is continuous on $J \times [1,\infty]$.
\end{proof}

\begin{lemma} \label{lemma47}
This family of kernels $\cK_{\xi,r}$ has a continuous extension to a map $J \times [1,\infty] \to \fK$ with functions $(\cK_{\xi,\infty})_t(z,w) = t \mathring \cK_{\eta(\xi)}(tz,tw)$ if and only if
\begin{equation}\label{8jul9}
\lim_{\tau \to \infty}  \frac 1\tau (\cK_{\xi,1})_\tau (z/\tau, w/\tau) = \mathring \cK_{\eta(\xi)}(z,w)
\end{equation}
uniformly on compact subsets of $(\xi,z,w) \in J \times \bbC \times \bbC$.
\end{lemma}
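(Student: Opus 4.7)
The proof will mirror the two preceding lemmas, relying on the scaling identity \eqref{Kxirstretch} as the sole computational input. For the forward direction, suppose the map $(\xi,r) \mapsto \cK_{\xi,r}$ extends continuously to $J \times [1,\infty]$ with $(\cK_{\xi,\infty})_t(z,w) = t\,\mathring\cK_{\eta(\xi)}(tz,tw)$. Evaluating continuity at $t=1$ along $r = \tau \to \infty$ and invoking \eqref{Kxirstretch} gives
\[
(\cK_{\xi,\tau})_1(z,w) = \tfrac{1}{\tau}(\cK_{\xi,1})_\tau(z/\tau,w/\tau) \;\longrightarrow\; (\cK_{\xi,\infty})_1(z,w) = \mathring\cK_{\eta(\xi)}(z,w),
\]
uniformly on compacts of $J \times \bbC \times \bbC$, which is precisely \eqref{8jul9}.

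For the converse, assume \eqref{8jul9} holds locally uniformly. Fix any $t > 0$ and rewrite \eqref{Kxirstretch} in the suggestive form
\[
(\cK_{\xi,r})_t(z,w) = t \cdot \tfrac{1}{rt}(\cK_{\xi,1})_{rt}\!\bigl((tz)/(rt),(tw)/(rt)\bigr).
\]
Since $rt \to \infty$ as $r\to\infty$, applying \eqref{8jul9} with the rescaled arguments $(tz,tw)$ yields
\[
(\cK_{\xi,r})_t(z,w) \;\longrightarrow\; t\,\mathring\cK_{\eta(\xi)}(tz,tw) = (\cK_{\xi,\infty})_t(z,w)
\]
locally uniformly in $(\xi,t,z,w)$, provided $t$ stays in a compact subset of $(0,\infty)$; continuity of $\eta$ (which is inherited from continuity of the corresponding family of $m$-functions) ensures the target is itself continuous in $\xi$. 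The case $t=0$ is handled separately: $(\cK_{\xi,r})_0 \equiv 0$ for every $(\xi,r) \in J \times [1,\infty]$, so no issue arises at that endpoint.

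To upgrade this pointwise-in-$t$ convergence to convergence in the space $\fK$ (locally uniform in all three variables $(t,z,w)$ simultaneously), invoke compactness of $\fK$ established in Theorem~\ref{thm:Contin}. Given any sequence $(\xi_n,r_n) \to (\xi,\infty)$ in $J \times [1,\infty]$ with $r_n < \infty$, pass to a subsequential limit in $\fK$; the preceding paragraph identifies this limit as $\cK_{\xi,\infty}$ on the open set $t > 0$, and by continuity in $t$ on both sides, the limits agree throughout $[0,\infty) \times \bbC \times \bbC$. Uniqueness of the subsequential limit upgrades to convergence in $\fK$, giving continuity of the extended map at $(\xi,\infty)$.

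The main conceptual point — and the only thing that distinguishes this lemma from its predecessors — is that the kernel depends on two spectral variables $(z,w)$ instead of one, which forces us to verify local uniformity over a higher-dimensional compact set. This is not a genuine obstacle: the scaling identity \eqref{Kxirstretch} transforms the $(z,w)$-rescaling cleanly, so the uniformity in \eqref{8jul9} (assumed jointly in $(\xi,z,w)$) directly supplies the needed joint uniformity, and the compactness of $\fK$ absorbs the remaining technical burden.
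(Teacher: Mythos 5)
Your proof is correct and follows essentially the same route as the paper's: the forward direction evaluates the continuous extension at $t=1$ with $\tau=r$, and the converse rewrites $(\cK_{\xi,r})_t(z,w)=\tfrac{t}{rt}(\cK_{\xi,1})_{rt}\bigl(\tfrac{tz}{rt},\tfrac{tw}{rt}\bigr)$ so that \eqref{8jul9} applies with $\tau=rt$, then invokes compactness of $\fK$ to upgrade to continuity of the extended map. You merely spell out the subsequential-limit and $t=0$ details that the paper leaves implicit.
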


\begin{proof}
If $\cK_{\xi,r} \to \cK_{\xi,\infty}$ in $\fK$ uniformly in $\xi$, evaluating these kernels at $t = 1$ and using $\tau = r$ gives \eqref{8jul9}. Conversely, if \eqref{8jul9} holds, then for each $t >0$, $z, w \in \bbC$, the function
\[
(\cK_{\xi,r})_t(z,w) = \frac 1r (\cK_{\xi,1})_{rt}(z/r,w/r)  = \frac t{rt} (\cK_{\xi,1})_{rt} \left(\frac{tz}{rt},\frac{tw}{rt} \right) 
\]
extends continuously to $J \times [1,\infty]$ with values $(\cK_{\xi,\infty})_t(z,w)  = t \mathring \cK_{\eta(\xi)}(tz,tw)$. Thus, by compactness of $\fS$, $\cK_{\xi,r}$ is continuous on $J \times [1,\infty]$.
\end{proof}

Until now, everything was derived from a starting function $m \in \fM$. By taking that to be the Weyl function in the setting of Hypothesis~\ref{Txzcanonicalsystem} and linking the objects $T(L,z)$, $\cK_L(z,w)$ to the continuous families indexed by $(\xi,r)$ above, we will obtain the proof of Theorem~\ref{thmuniformnormallimit}.

\begin{proof}[Proof of Theorem~\ref{thmuniformnormallimit}]
Combining the lemmas above, we conclude that the following statements are mutually equivalent:
\begin{enumerate}[(i')]
\item $m(\xi + i y) \to \eta(\xi)$ as $y \downarrow 0$, uniformly in $\xi \in J$;
\item there exists continuous $\eta:J \to \overline{\bbC_+}$ such that \eqref{8jul4} holds uniformly in $\xi \in J$;
\item there exists continuous $\eta:J \to \overline{\bbC_+}$ such that \eqref{8jul7} holds uniformly in $\xi \in J$;
\item there exists continuous $\eta:J \to \overline{\bbC_+}$ such that \eqref{8jul9} holds uniformly in $\xi \in J$.
\end{enumerate}
Moreover, the function $\eta$ then must be the same in all four statements.  Clearly, (i') is precisely Theorem~\ref{thmuniformnormallimit}(i). Let us compare the other statements to those in Theorem~\ref{thmuniformnormallimit}.

Since the family is in the limit point case, for each $\xi \in J$,
\[
\lim_{L \to\infty} \tau_\xi(L) = \infty.
\]
Moreover, $\tau_\xi$ is monotone increasing in $L$ for each $\xi$, so convergence is also uniform on compact intervals by Dini's theorem,
\begin{equation}\label{Dini1}
\lim_{L\to\infty} \inf_{\xi \in J} \tau_\xi(L) = \infty.
\end{equation}
Thus, in the reparametrization from $(\xi,L)$ to $(\xi, t) = (\xi, \tau_\xi(L))$, uniform convergence statements as $L \to \infty$ are equivalent to uniform convergence statements as $t \to \infty$.

Using \eqref{27jul1a}, \eqref{27jul1b}, \eqref{27jul1c} with $t = \tau_\xi(L)$ given by \eqref{universalityscale}, statements (ii'), (iii'), (iv') above turn into statements (ii), (iii), (iv) of Theorem~\ref{thmuniformnormallimit}, and the proof is complete.
\end{proof}

\begin{proof}[Proof of Theorem~\ref{theoremJacobiEquivalenceUniform}]
This is mostly just a special case of Theorem~\ref{thmuniformnormallimit}. The only difference is in condition (ii), which is stated here over the sequence indexed by $n$. Obviously, if 
\begin{equation}\label{25jul1}
\lim_{L\to\infty} \frac 1{\tau_\xi(L)} \cK_L(\xi,\xi) = H,
\end{equation}
then 
\begin{equation}\label{25jul2}
\lim_{n\to\infty} \frac 1{\tau_\xi(n)} \cK_n(\xi,\xi) = H.
\end{equation}
Conversely, using linearity of $\cK_L(\xi,\xi)$ in $L \in [n,n+1]$ for each $n$,
\[
\frac 1{\tau_\xi(L)} \cK_L(\xi,\xi)  = s  \frac1 {\tau_\xi(n+1)} \cK_{n+1}(\xi,\xi) + (1-s)  \frac1 {\tau_\xi(n)} \cK_{n}(\xi,\xi)
\]
where
\[
s = \frac{\tau_\xi(n+1) (L-n)}{ \tau_\xi(n+1) (L-n) + \tau_\xi(n) (n+1 - L)} \in [0,1],
\]
so these are convex combinations of $\frac 1{\tau_\xi(n)} \cK_n(\xi,\xi)$.  Thus, \eqref{25jul2} implies \eqref{25jul1}.
\end{proof}

\begin{proof}[Proof of Theorem~\ref{thmlimitmatrixCDkernel2}]
This follows from the implication (i)$\implies$(iv) in Theorem~\ref{theoremJacobiEquivalenceUniform}.
\end{proof}

\section{Universality from nontangential limits of $\Im m$: rescaling and shifting} \label{sectionLimitImm}

When we only have at our disposal the limit behavior of $\Im m$, the situation is more complicated. The following lemma  reinterprets nontangential boundary limits of $\Im m$ in terms of continuity in $\fM$, by showing that we can correct the behavior of $\Re m$ as $r \to \infty$ by an additive action.

\begin{lemma} \label{lemmaCaratheodory}
Assume that for some $0 < \alpha < \pi/2$,  \eqref{limitImm} holds uniformly in $\xi \in J$ and denote
\[
c_{\xi,r} = \Re m(\xi + i /r), \qquad (\xi,r) \in J \times [1,\infty).
\]
Then $m_{\xi,r} - c_{\xi,r} \to i \pi f_\mu(\xi)$ in $\fM$ as $r \to\infty$, uniformly in $\xi \in J$.
\end{lemma}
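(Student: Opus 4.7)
My plan is to exploit compactness of $\fM$: rather than prove uniform convergence directly, I will show that every subsequential limit in $\fM$ of $f_{\xi_n,r_n} := m_{\xi_n,r_n} - c_{\xi_n,r_n}$, along arbitrary sequences $\xi_n \to \xi_\ast \in J$ and $r_n \to \infty$, must be the constant function $i\pi f_\mu(\xi_\ast)$. A standard compactness/subsequence argument, identical to the one used in Section~\ref{sectionLimitm} for the unshifted family $m_{\xi,r}$, will then convert this uniqueness of subsequential limits into the desired uniform convergence on $J$.

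Two preliminary observations: each $f_{\xi,r}$ lies in $\fM$, since subtracting the real constant $c_{\xi,r}$ preserves the Herglotz property; and $f_\mu$ is continuous on $J$, since the hypothesis applied in the vertical direction $\arg w = \pi/2$ (which always lies in the cone) exhibits $f_\mu$ as a uniform limit on $J$ of the continuous functions $\xi \mapsto \Im m(\xi + iy)/\pi$ as $y \downarrow 0$.

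The heart of the plan is identifying any subsequential limit $g \in \fM$. I would work on the open cone $\Omega_\alpha := \{w \in \bbC_+ \mid \alpha < \arg w < \pi - \alpha\}$: for $z \in \Omega_\alpha$, one has $z/r_n \to 0$ with $\arg(z/r_n) \in [\alpha,\pi-\alpha]$, so the uniform cone hypothesis combined with continuity of $f_\mu$ yields $\Im m(\xi_n + z/r_n) \to \pi f_\mu(\xi_\ast)$. Since $c_{\xi_n,r_n} \in \bbR$, taking imaginary parts in $f_{\xi_n,r_n}(z) \to g(z)$ gives $\Im g(z) = \pi f_\mu(\xi_\ast)$ on all of $\Omega_\alpha$. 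Because $\Im g$ is harmonic on the connected domain $\bbC_+$ and constant on the open set $\Omega_\alpha$, the identity principle for harmonic functions extends this to $\Im g \equiv \pi f_\mu(\xi_\ast)$ throughout $\bbC_+$, and the Cauchy--Riemann equations then force $g$ to be a complex constant. Evaluating $f_{\xi_n,r_n}(i) = i\Im m(\xi_n + i/r_n) \to i\pi f_\mu(\xi_\ast)$ pins down this constant, so $g \equiv i\pi f_\mu(\xi_\ast)$.

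The main obstacle I anticipate lies in this identification step: one must propagate the hypothesis to a two-dimensional set rather than a single ray. A bounded positive harmonic function on $\bbC_+$ can easily be constant on the imaginary axis without being globally constant (e.g.\ $u(z) = 1 + (2/\pi)\arg z$), so knowing $\Im g$ on a single vertical ray would not suffice. The positive aperture of the cone in \eqref{limitImm} is precisely what allows me to invoke the identity principle for $\Im g$ on the open set $\Omega_\alpha$, and this is the subtle point around which the whole argument turns.
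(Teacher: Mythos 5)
Your proof is correct. It rests on the same two pillars as the paper's argument --- the positive aperture of the cone in \eqref{limitImm} fixes $\Im$ of any limit function on a two-dimensional set, and the choice $c_{\xi,r}=\Re m(\xi+i/r)$ normalizes the real part at the single point $i$ --- but you implement the key identification step differently. The paper transports everything to the disk via the Cayley transform and uses the Schwarz (complex Poisson) integral formula on a circle $\partial B_{R_2}$ whose image lies in the cone: uniform convergence of $\Re g_{\xi,r}$ on that circle, together with the normalization $\Im g_{\xi,r}(0)=0$, reconstructs $g_{\xi,r}$ quantitatively on a smaller disk, and only then is compactness of $\fM$ invoked to upgrade to convergence in $\fM$. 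You instead argue entirely softly: extract a subsequential limit $g$ by compactness of $\fM$, observe $\Im g\equiv\pi f_\mu(\xi_\ast)$ on the open cone $\Omega_\alpha$, propagate by the identity principle for harmonic functions, conclude $g$ is constant by Cauchy--Riemann, and pin the constant down by evaluating at $i$. Your route avoids Poisson-kernel estimates at the cost of being non-quantitative; the paper's gives explicit control of $g_{\xi,r}$ on a fixed disk before any compactness is used. Both are valid, and you correctly isolate the subtle point (a single ray would not suffice, as your example $1+(2/\pi)\arg z$ shows). The only items worth making explicit in a write-up are (i) that the subsequential limit $g$ is not the constant $\infty$ --- which follows from $f_{\xi_n,r_n}(i)=i\,\Im m(\xi_n+i/r_n)$ converging to a finite value, so that ``taking imaginary parts'' pointwise is legitimate --- and (ii) the triangle-inequality bookkeeping in the final contradiction argument, which needs the continuity of $\xi\mapsto i\pi f_\mu(\xi)$ that you have already established.
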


\begin{proof}
Precomposing all the functions
\[
\tilde m_{\xi,r} = m_{\xi,r} - c_{\xi,r}
\]
with the Cayley transform $\gamma : \bbD  \to \bbC_+$ and multiplying by $-i$ gives analytic maps
\[
g_{\xi,r} =  - i \tilde m_{\xi,r} \circ \gamma : \bbD \to \{w \in \bbC \mid \Re w > 0\}.
\]
Fix some radii $0 < R_1 < R_2 < 1$ such that the image $\gamma(\overline{\bbD_{R_2}(0)})$ is contained in the sector $\{ z \mid \alpha \le \arg z \le \pi - \alpha \}$. The complex Poisson representation (Schwarz integral formula) applied to the circle of radius $R_2$ around $0$ gives
\begin{equation}\label{Schwarz}
g_{\xi,r}(\z) = i \Im g_{\xi,r}(0) + \int_0^{2\pi}\frac{R_2 e^{i\phi} + \z}{R_2 e^{i\phi} - \z} \Re g_{\xi,r}(R_2 e^{i\phi}) \frac{d\phi}{2\pi}.
\end{equation}
Note that $\Im g_{\xi,r}(0) = \Re m_{\xi,r}(i) - c_{\xi,r}  = 0$. The kernels obey
\[
\sup_{\zeta \in \overline{\bbD_{R_1}(0)} }  \int_0^{2\pi} \left\lvert \frac{R_2 e^{i\phi} + \z}{R_2 e^{i\phi} - \z} \right\rvert \frac{d\phi}{2\pi}< \infty
\]
so for $(\xi_n, r_n) \to (\xi, r)$,  uniform convergence $\Re g_{\xi_n,r_n} \to \Re g_{\xi,r}$ on the circle $\lvert \zeta \rvert = R_2$ implies uniform convergence  $g_{\xi_n,r_n} \to  g_{\xi,r}$ on the disk $\overline{\bbD_{R_1}(0)}$ and therefore uniform convergence $m_{\xi_n, r_n} \to m_{\xi,r}$ on the compact $\gamma( \overline{\bbD_{R_1}(0)} )$. By compactness of $\fM$, this implies locally uniform convergence $m_{\xi_n, r_n} \to m_{\xi,r}$ on $\bbC_+$. 
\end{proof}

Lemma~\ref{lemmaCaratheodory} paves the way for the following modification of the strategy from Section~\ref{sectionLimitm}: let us consider the family of $m$-functions indexed by $(\xi,r) \in J \times [1,\infty]$ given by
\begin{equation}\label{tildemxir}
\tilde m_{\xi,r}(z) = \begin{cases}
m(\xi + z/r) - c_{\xi,r} & r \in [1,\infty) \\
i \pi  f_\mu(\xi) & r = \infty
\end{cases}
\end{equation}
By Lemma~\ref{lemmaCaratheodory} this is a continuous family. We also consider the corresponding trace normalized canonical systems, denoting the Hamiltonians, solutions, matrix CD kernels, and scalar CD kernels by  $\tilde H_{\xi,r}, \tilde M_{\xi,r}, \tilde \cK_{\xi,r}, \tilde K_{\xi,r}$, respectively.

The necessary additive shift to the Weyl function can be implemented by action by a triangular matrix. The action by a matrix $A \in \SL(2,\bbR)$ at the level of the $m$-function by
\[
m_A(z) = A^{-1} m(z)
\]
(recall that we use $A^{-1}$ also for the generated M\"obius transformation) corresponds to
\[
 H_A = A^* H A, \quad M_A = A^{-1} M A, \quad \cK_A = A^* \cK A
\]
(this is a combination of several lemmas in \cite{RemlingBookCanSys} and a direct calculation for the kernel). We emphasize that this does not preserve trace normalization. The translations like those in \eqref{tildemxir} are encoded by matrices of the form
\[
A  =  \begin{pmatrix}
1 & a \\
0 & 1
\end{pmatrix},\quad a\in\bbR,
\]
and we emphasize the observation that for such $A$, $A \binom 10 = \binom 10$ implies
\[
{ \binom 10}^* \cK_A \binom 10 = { \binom 10}^* \cK \binom 10.
\]
This is a crucial observation: it tells us that the scalar CD kernels are unaffected by $A$, except for the fact that trace-normalization changes the parametrization. Although it is possible to reparametrize explicitly, this reparametrization is nonlinear and impractical. We will instead deal with this more implicitly and formulate the conclusion this way: as families of functions of $(z,w) \in \bbC \times \bbC$,
\[
\{ (\tilde K_{\xi,r} )_t (z,w) \mid t \in [0,\infty ) \}  = \{ ( K_{\xi,r} )_t (z,w) \mid t \in [0,\infty ) \} 
\]
 for any $(\xi,r) \in J \times [1,\infty)$. Combining this with the previous section shows that
\begin{equation}\label{22jul3}
\left \{  \frac 1{r} K_L\left( \xi + \frac z{r} , \xi + \frac w{r} \right)  \mid L \in [0,\infty) \right \} = \left\{ ( \tilde K_{\xi,r})_t(z,w)  \mid t \in [0,\infty) \right\}. 
 \end{equation}

 Since we wish to work at scale $K_L(\xi,\xi)$, it is important to observe that that scale is unbounded. For orthogonal polynomials, it is a well-known fact that $\sum_{j=0}^\infty p_j(\xi)^2 = \infty$ corresponds to a lack of point mass in the measure and is equivalent to
 \begin{equation} \label{eqnlackofpointmass}
\lim\limits_{\e\to 0}\e \Im m(\xi+i\e)=0.
\end{equation}
The same holds for canonical systems:

\begin{lemma} \label{lemmalackofpointmass}
Let $m: \bbC_+ \to \bbC_+$ be the Weyl function of a canonical system. For any $\xi\in\bbR$, \eqref{eqnlackofpointmass} is equivalent to
\begin{align}
\lim\limits_{L\to\infty}K_L(\xi,\xi)=\infty.	
\end{align}
\end{lemma}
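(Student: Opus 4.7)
The plan is to show that both conditions in the equivalence are equivalent to $\rho(\{\xi\}) = 0$, where $\rho$ is the measure appearing in the Herglotz representation of $m$.

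First, $K_L(\xi,\xi) = \int_0^L \binom{1}{0}^* T(x,\xi)^* A(x) T(x,\xi) \binom{1}{0}\,dx$ is non-decreasing in $L$ since $A(x)\ge 0$, so the limit $K_\infty(\xi,\xi) := \lim_{L\to\infty} K_L(\xi,\xi)$ exists in $[0,\infty]$. On the $m$-function side, the Herglotz representation
\[
m(z) = a + bz + \int \left(\frac{1}{\lambda - z} - \frac{\lambda}{1+\lambda^2}\right) d\rho(\lambda)
\]
yields $\e \Im m(\xi+i\e) = b\e^2 + \int \frac{\e^2}{(\lambda-\xi)^2 + \e^2} d\rho(\lambda) \to \rho(\{\xi\})$ as $\e\downarrow 0$ by bounded convergence. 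Thus the right-hand condition in the lemma is exactly the vanishing of $\rho(\{\xi\})$, and it suffices to prove $K_\infty(\xi,\xi) = 1/\rho(\{\xi\})$, with the convention $1/0 = \infty$.

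For the ``$\le$'' inequality, I would invoke the classical Weyl identity, valid in the limit point case:
\[
\int_0^\infty \binom{m(z)}{1}^* T(x,z)^* A(x) T(x,z) \binom{m(z)}{1}\,dx = \frac{\Im m(z)}{\Im z}, \qquad z\in\bbC_+,
\]
which follows by integrating $\partial_x(u^* j u)$ against the Weyl solution $u(x) = T(x,z)\binom{m(z)}{1}$ and using that this boundary form vanishes at $\infty$. Assuming $\rho(\{\xi\}) > 0$, one has the simple pole asymptotic $m(\xi+i\e) \sim i\rho(\{\xi\})/\e$, so $\bigl|\binom{m(\xi+i\e)}{1}\bigr|^2 \sim \rho(\{\xi\})^2/\e^2$ while the normalized vector converges to $\binom{1}{0}$. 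Restricting the integral to $[0,L]$, dividing by $\bigl|\binom{m}{1}\bigr|^2$, and letting $\e\downarrow 0$ with $L$ fixed -- using that $T(x,\cdot)$ is entire with locally uniform convergence in $x\in[0,L]$ -- produces $K_L(\xi,\xi) \le 1/\rho(\{\xi\})$, hence $K_\infty(\xi,\xi) \le 1/\rho(\{\xi\})$.

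For the reverse inequality, suppose $K_\infty(\xi,\xi) < \infty$. Passing to the de Branges gauge via $M(x,z) = T(x,0)^{-1}T(x,z)$ from Section~\ref{sec:CanSys}, the integrand becomes $\|M(x,\xi)\binom{1}{0}\|_{H(x)}^2$, so $M(\cdot,\xi)\binom{1}{0}$ is a nonzero element of $L^2([0,\infty), H\,dx)$ solving the canonical system at real energy $\xi$. In the limit point case this exhibits $\xi$ as an eigenvalue of the self-adjoint realization corresponding to the boundary condition implicit in the Weyl-disk definition of $m$, and the standard spectral theory of canonical systems yields $\rho(\{\xi\}) = 1/\|M(\cdot,\xi)\binom{1}{0}\|_{L^2(H)}^2 = 1/K_\infty(\xi,\xi) > 0$. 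The hard part will be precisely this last step: matching the boundary-condition convention hidden in the $(1,1)$-entry of the kernel with the spectral data of $m$ so that the eigenvalue mass formula takes the claimed form. This is classical material (e.g.\ from \cite{RemlingBookCanSys}) but is the one place where external spectral machinery is invoked beyond the ingredients already developed in the paper -- monotonicity of $K_L$, the Herglotz representation, the Weyl identity, and the de Branges gauge.
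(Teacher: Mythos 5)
Your proposal is correct in substance and overlaps with the paper's proof in its first and last steps, but the middle is genuinely different. The paper also begins by identifying $\lim_{\e\downarrow 0}\e\Im m(\xi+i\e)$ with the point mass $\rho(\{\xi\})$ of the representing measure, but then (after translating to $\xi=0$ and passing to the de Branges gauge, so that the relevant solution is the constant $e_1=\binom{1}{0}$) it runs \emph{both} directions of the equivalence through the spectral theorem: $\rho(\{0\})>0$ iff multiplication by the spectral variable has an eigenvalue at $0$, iff the canonical system relation with the left boundary condition has a nontrivial kernel, iff $e_1$ is a (nonzero) element of $L^2_H$, iff $\lim_L K_L(0,0)<\infty$. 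You instead prove the sharper quantitative identity $K_\infty(\xi,\xi)=1/\rho(\{\xi\})$, and your ``$\le$'' half --- the implication $\rho(\{\xi\})>0\Rightarrow K_\infty(\xi,\xi)<\infty$ --- avoids the spectral theorem entirely, using only the Weyl-disk inequality $\int_0^L u_\e^*Au_\e\,dx\le\Im m(\xi+i\e)/\e$ for $u_\e=T(\cdot,\xi+i\e)\binom{m(\xi+i\e)}{1}$ together with $\e m(\xi+i\e)\to i\rho(\{\xi\})$ and locally uniform convergence of $T$; this is a nice, more self-contained computation (and note it needs only the one-sided Weyl inequality on $[0,L]$, not the limit-point equality over $[0,\infty)$). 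Your ``$\ge$'' half is the same spectral-theoretic step as the paper's. Two small remarks. First, for the lemma as stated you do not need the norming-constant formula $\rho(\{\xi\})=1/\|u\|_{L^2(H)}^2$ that you flag as the hard convention-matching step: the qualitative statement that a nonzero $L^2_H$ solution of $j u'=-\xi Hu$ satisfying the left boundary condition makes $\xi$ an eigenvalue, hence $\rho(\{\xi\})>0$, already closes the equivalence (the paper only uses this qualitative version). Second, both your argument and the paper's implicitly need $K_\infty(\xi,\xi)>0$ so that the candidate eigenfunction is nonzero \emph{as an element of} $L^2_H$; this holds because $u^*Hu\equiv 0$ would force $Hu=0$ a.e., hence $u\equiv e_1$ and $H\equiv\mathring H_\infty$, contradicting $m\ne\infty$. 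Worth a sentence, but not a gap specific to your approach.
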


\begin{proof}
We first note that the claim is translation invariant, so it suffices to prove the case $\xi=0$, and that it is gauge-invariant, so it suffices to prove the case of canonical systems.

Since $m\neq\infty$, there are uniquely determined $a\geq 0,b\in\bbR$ and a measure $\mu$ such that \[
m(z)=az+b+\int_{-\infty}^\infty\left(\frac{1}{\xi-z}-\frac{\xi}{1+\xi^2}\right)d\mu(\xi), \qquad \int \frac 1{1+\xi^2} d\mu(\xi) < \infty.
\]
Moreover, $\lim_{\e\to 0}\e \Im m(i\e) = \mu( \{0\})$. This limit is zero if and only if the operator of multiplication by $\xi$ in $L^2(\bbR, d\mu(\xi))$ does not have an eigenvalue at $0$.

Denote the Hilbert space
\[
L^2_H=\left\{f:(0,\infty)\to \bbC^2: \int_0^\infty f^*(x)H(x)f(x)dx<\infty \right\}.
\]
The canonical system relation, with a Dirichlet boundary condition at $0$, is the linear subspace $\cT \subset L^2_H \oplus L^2_H$ defined by
\[
\cT = \left\{ (f,g) \in L^2_H \oplus L^2_H \mid  f(T) -f(0) = j  \int_0^T H(t) g(t) \,dt \; \forall T \in [0,\infty), \; \begin{pmatrix} 0 & 1 \end{pmatrix} f(0) = 0\right\}.
\]
Its kernel is the set of $(f,0) \in \cT$; elements of the kernel can only be scalar multiples of $f(t) = e_1 = \begin{pmatrix}
	1&0
	\end{pmatrix}^*$. To check whether these lie in the Hilbert space, note that 
\[
K_L(0,0)=\int_0^Le_1^*H(x)e_1dx,
\]
so $\cT$ has a nontrivial kernel if and only if $\lim_{L\to\infty} K_L(0,0) < \infty$. This concludes the proof.
\end{proof}

The discussion so far indicates that instead of $t$, we would prefer to parametrize the kernels by 
\[
\sigma_{\xi,r}(t) = (\tilde K_{\xi,r})_t(0,0).
\]
By Lemma~\ref{lemmalackofpointmass} and our assumption on $m$, the functions $\tilde m_{\xi,r}$ correspond to measures without a point mass at $0$. Thus, for each $\xi, r$, viewed as a function of $t$, $\sigma_{\xi,r} : [0,\infty) \to [0,\infty)$ is an increasing surjection. By Dini's theorem,
\begin{equation}\label{Dini3}
\lim_{t\to\infty} \inf_{(\xi,r) \in J \times [1,\infty]} \sigma_{\xi,r}(t) = \infty.
\end{equation}
Moreover, $\sigma_{\xi,r}(t)$ is jointly continuous in $\xi,r,t$, since $(\tilde \cK_{\xi,r})_t$ is.

However, $\sigma_{\xi,r}$ may not be injective, so it is not a good candidate for a parameter. This is related to the notion of singular intervals; in the orthogonal polynomial case this noninjectivity is connected to   the possibility that $p_n(\xi) = 0$. Thus, we will not reparametrize by $s = \sigma_{\xi,r}(t)$, but we will study the set on which $ \sigma_{\xi,r}(t) = 1$ and proceed more indirectly:

\begin{lemma} \label{lemmacS1}
The set
\[
\cS = \{ (\xi,r,t) \in J \times [1,\infty] \times [0,\infty) \mid \sigma_{\xi,r}(t) = 1 \}
\]
is a compact subset of $J \times [1,\infty] \times [0,\infty)$.
\end{lemma}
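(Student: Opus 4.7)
The strategy is to show separately that $\cS$ is closed in $J\times[1,\infty]\times[0,\infty)$ and that the third coordinate is bounded on $\cS$; since $J\times[1,\infty]$ is already compact, this will place $\cS$ inside a compact subproduct and conclude the argument.

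Closedness follows immediately from the joint continuity of $(\xi,r,t)\mapsto\sigma_{\xi,r}(t)$ on $J\times[1,\infty]\times[0,\infty)$, which is noted just before the statement (it inherits continuity from the continuous family of kernels $(\tilde\cK_{\xi,r})_t$, i.e.\ from the continuity of $(\xi,r)\mapsto \tilde m_{\xi,r}$ in $\fM$ combined with Theorem~\ref{thm:Contin}). The set $\cS$ is the preimage of $\{1\}$ under this continuous function, hence closed.

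The real content is the uniform bound in $t$. Here the plan is to combine two inputs: the monotonicity of $t\mapsto\sigma_{\xi,r}(t)$ for each fixed $(\xi,r)$ (stated in the text immediately before the lemma, coming from the identity $\sigma_{\xi,r}(t)=(\tilde K_{\xi,r})_t(0,0)$ and the fact that $t\mapsto(\tilde\cK_{\xi,r})_t$ is built from an increasing integral), together with the uniform divergence \eqref{Dini3}. From \eqref{Dini3}, choose $T>0$ large enough that
\[
\inf_{(\xi,r)\in J\times[1,\infty]} \sigma_{\xi,r}(T) \ge 2.
\]
Then, by monotonicity, for every $(\xi,r)\in J\times[1,\infty]$ and every $t>T$ one has $\sigma_{\xi,r}(t)\ge 2>1$, so no such point lies in $\cS$. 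Hence
\[
\cS \subset J\times[1,\infty]\times[0,T],
\]
and the right-hand side is compact. Closedness of $\cS$ in $J\times[1,\infty]\times[0,\infty)$ (shown above) yields closedness in this compact set, and therefore compactness of $\cS$.

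The only potentially subtle point is making sure the monotonicity argument is not spoiled by the non-strict monotonicity caused by singular intervals; however, non-strict monotonicity is entirely sufficient for the above bound, since we only need that once $\sigma_{\xi,r}$ reaches a value exceeding $1$, it cannot decrease back to $1$. Thus no further information about the structure of singular intervals is needed for this lemma.
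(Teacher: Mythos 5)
Your proof is correct and follows essentially the same route as the paper: closedness via the joint continuity of $\sigma_{\xi,r}(t)$ (preimage of $\{1\}$), and containment in a compact slab $J\times[1,\infty]\times[0,T]$ via \eqref{Dini3}. The only cosmetic difference is that you invoke monotonicity to pass from $t=T$ to $t>T$, whereas \eqref{Dini3} already gives the uniform lower bound for all $t\ge T$ directly; either reading works.
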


\begin{proof}
Since $\sigma_{\xi,r}(t)$ is jointly continuous, the set $\cS$ is closed as the inverse image of $\{1\}$. By \eqref{Dini3}, the set $\cS$ is contained in some compact of the form $J \times [1,\infty] \times [0,T]$, $T < \infty$. Thus, as a closed subset of a compact set, $\cS$ is compact.
\end{proof}

Another necessary observation is that $\sigma_{\xi,r}$ are bijections for $r = \infty$: 

\begin{lemma}\label{lemmacS2}
For any $\xi \in J$, there is a unique $t$ such that $(\xi,\infty,t) \in \cS$, and it is given by
\[
t = 1 +\pi^2 f_\mu(\xi)^2.
\]
\end{lemma}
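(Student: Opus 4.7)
The plan is to compute $\sigma_{\xi,\infty}(t)$ explicitly using the known form of the limiting canonical system at $r = \infty$ and observe that it is linear in $t$, giving a unique solution.

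First, by the construction in \eqref{tildemxir}, the $m$-function at $r = \infty$ is the constant $\tilde m_{\xi,\infty}(z) = i\pi f_\mu(\xi) \in \bbC_+$. Under the de Branges correspondence (and the continuity provided by Theorem~\ref{thm:Contin}), this constant Herglotz function corresponds to the trace-normalized canonical system with constant Hamiltonian $\mathring H_{\eta(\xi)}$ where $\eta(\xi) = i\pi f_\mu(\xi)$. Since $\Re \eta(\xi) = 0$ and $\lvert \eta(\xi) \rvert^2 = \pi^2 f_\mu(\xi)^2$, the explicit formula \eqref{4jul1} gives
\[
\mathring H_{\eta(\xi)} = \frac{1}{1+\pi^2 f_\mu(\xi)^2}\begin{pmatrix} 1 & 0 \\ 0 & \pi^2 f_\mu(\xi)^2 \end{pmatrix}.
\]

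Second, by Lemma~\ref{lemma47} applied to this family (or equivalently by the scaling formula for constant-Hamiltonian kernels from Example~\ref{xmplconsthamiltonian}), the limit kernel is
\[
(\tilde \cK_{\xi,\infty})_t(z,w) = t\, \mathring \cK_{\eta(\xi)}(tz,tw),
\]
and evaluating at $z = w = 0$ gives $\mathring \cK_{\eta(\xi)}(0,0) = \mathring H_{\eta(\xi)}$, so
\[
(\tilde \cK_{\xi,\infty})_t(0,0) = t\, \mathring H_{\eta(\xi)}.
\]

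Extracting the scalar kernel via \eqref{frommatrixtoscalarkernel}, we read off the $(1,1)$-entry:
\[
\sigma_{\xi,\infty}(t) = \begin{pmatrix} 1 \\ 0 \end{pmatrix}^{\!*} (\tilde \cK_{\xi,\infty})_t(0,0) \begin{pmatrix} 1 \\ 0 \end{pmatrix} = \frac{t}{1+\pi^2 f_\mu(\xi)^2}.
\]
Since this is a strictly increasing linear function of $t$, the equation $\sigma_{\xi,\infty}(t) = 1$ has the unique solution $t = 1 + \pi^2 f_\mu(\xi)^2$, which is the claim. There is no real obstacle here: the result is a direct computation, with the only subtlety being the correct identification of $\eta(\xi)$ from the shifted family $\tilde m_{\xi,r}$ (so that $\Re \eta(\xi) = 0$, ensuring $\mathring H_{\eta(\xi)}$ is diagonal and the $(1,1)$ entry takes the claimed form).
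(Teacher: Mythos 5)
Your proof is correct and follows essentially the same route as the paper: both start from $(\tilde\cK_{\xi,\infty})_t(z,w)=t\,\mathring\cK_{i\pi f_\mu(\xi)}(tz,tw)$, evaluate the scalar kernel at the origin to get $\sigma_{\xi,\infty}(t)=t/(1+\pi^2 f_\mu(\xi)^2)$, and solve $\sigma_{\xi,\infty}(t)=1$. The only cosmetic difference is that you evaluate the matrix kernel at $z=w=0$ directly via $\mathring\cK_{\eta}(0,0)=\mathring H_{\eta}$, whereas the paper first writes out the full sinc-type formula for $(\tilde K_{\xi,\infty})_t(z,w)$ and then specializes.
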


\begin{proof}
Starting from 
\[
(\tilde \cK_{\xi,\infty})_t (z,w) = t \mathring \cK_{i\pi f_\mu(\xi)} (tz, tw) 
\]
a direct calculation gives
\begin{equation}\label{eqn6}
(\tilde K_{\xi,\infty})_t (z,w) = \frac{ \sin\left( \frac{ \pi f_\mu(\xi)}{1 + \pi^2 f_\mu(\xi)^2 } t (\overline{w} - z )\right) }{ \pi f_\mu(\xi) ( \overline{w} - z ) }
\end{equation}
and in particular
\[
(\tilde K_{\xi,\infty})_t (0,0) = \frac t{1+\pi^2 f_\mu(\xi)^2}. 
\]
From this the claim is obvious.
\end{proof}

Now we can collect these observations and find our universality limit in the set of kernels $(\tilde \cK_{\xi,r})_t$ with $(\xi,r,t) \in \cS$.

\begin{proof}[Proof of Theorem~\ref{thmImaginaryLimitCanonical}]
Let us set $r$ as a function of $\xi, L$ as
\[
r = r(\xi,L) = K_L(\xi,\xi).
\]
By Lemma~\ref{lemmalackofpointmass},  $r(\xi, L) \to \infty$ as $L \to \infty$. 

By \eqref{22jul3} there exists $t(\xi,L)$ such that
\[
\frac 1{r(\xi,L)} K_L\left( \xi + \frac z{r(\xi,L)} , \xi + \frac w{r(\xi,L)} \right)  = (\tilde K_{\xi,r(\xi,L)})_{t(\xi,L)}(z,w)
\]
(note that $t(\xi,L)$ is not necessarily unique). By comparing values at $z=w=0$ we see that
\[
\frac 1{r(\xi,L)} K_L(\xi,\xi) = (\tilde K_{\xi,r(\xi,L)} )_{t(\xi,L)} (0,0) = \sigma_{\xi,r(\xi,L)}(t(\xi,L))
\]
and since the left-hand side equals $1$, we conclude that
\[
(\xi, r(\xi,L), t(\xi,L)) \in \cS.
\]
The property $(\xi, r(\xi,L), t(\xi,L)) \in \cS$ together with $r(\xi,L) \to \infty$ and Lemmas~\ref{lemmacS1}, \ref{lemmacS2} implies that
\[
\lim_{L \to\infty} t(\xi,L) = 1 +\pi^2 f_\mu(\xi)^2
\]
so continuity of reproducing kernels gives
\begin{equation}\label{eqn7}
\lim_{L\to\infty} (\tilde K_{\xi,r(\xi,L)})_{t(\xi,L)}(z,w) =  (\tilde K_{\xi,\infty})_{1+\pi^2f_\mu(\xi)^2} (z, w).
\end{equation}
Inserting our definition of $t(\xi,L)$ and evaluating the right-hand side using \eqref{eqn6}, we have proved that
\begin{equation}\label{22jul5}
\lim_{L \to\infty} \frac 1{r(\xi,L)} K_L\left( \xi + \frac z{r(\xi,L)} , \xi + \frac w{r(\xi,L)} \right)  = \frac{ \sin(\pi f_\mu(\xi) (\overline{w} - z ) ) }{ \pi f_\mu(\xi) (\overline{w} - z ) }
\end{equation} 
and by our definition of $r(\xi,L)$, this is the same as saying that
\begin{equation}\label{22jul6}
\lim_{L \to\infty} \frac 1{K_L(\xi,\xi)} K_L\left( \xi + \frac z{K_L(\xi,\xi)} , \xi + \frac w{K_L(\xi,\xi)} \right)  = \frac{ \sin( \pi f_\mu(\xi) (\overline{w} - z ) ) }{ \pi f_\mu(\xi) (\overline{w} - z ) }
\end{equation}
From here, the theorem as formulated follows by a linear rescaling of $z,w$ by a factor of $f_\mu(\xi)$.
\end{proof}

\begin{proof}[Proof of Theorems~\ref{thmImaginaryLimitJacobi2}, \ref{thmImaginaryLimitSchrodinger}]
Given the reductions of Jacobi matrices and Schr\"odinger operators to the setting of Hamiltonian systems, these are direct corollaries of Theorem~\ref{thmImaginaryLimitCanonical}.
\end{proof}

\begin{proof}[Proof of Theorem~\ref{thmImaginaryLimitJacobi}] 
This is the special case $J = \{\xi\}$  of Theorem~\ref{thmImaginaryLimitJacobi2}.
\end{proof}

\section{Orthogonal polynomials on the unit circle} \label{sectionOPUC}

Orthogonal polynomials on the unit circle obey the Szeg\H o recursion, which gives rise to the sequence $\{\alpha_n \}_{n=0}^\infty$ of Verblunsky coefficients $\alpha_n \in \bbD$ associated to $\mu$. The recursion can be written in matrix form as
\[
S(n+1,z) = 
A(\alpha_n,z) S(n,z), \qquad A(\alpha_n,z) = \frac 1{ \sqrt{1 - \lvert \alpha_{n}\rvert^2}} \begin{pmatrix}
z & - \overline{ \alpha_{n}} \\
- \alpha_{n} z & 1
\end{pmatrix},\qquad S(0,z)= I_2.
%\begin{pmatrix}
%1&0\\0&1
%\end{pmatrix}.
\]
We denote by
\[
\cJ = \begin{pmatrix}
-1 & 0 \\
0 & 1
\end{pmatrix}
\]
the signature matrix corresponding to the unit disk. The crucial property of the Szeg\H o recursion is:

\begin{lemma} \label{lemmaSzego1step}
For any $\alpha_n \in \bbD$ and $z \neq 0$,
\[
\frac{ A(\alpha_n,z)^* \cJ A(\alpha_n, z)}{ \lvert z \rvert}  - \cJ = \begin{pmatrix} 1 - \lvert z \rvert  &  0 \\
0 & \lvert z \rvert^{-1} - 1
\end{pmatrix}
\]
which is a positive matrix for $0 < \lvert z \rvert \le 1$, and a negative matrix for  $\lvert z \rvert \ge 1$.
\end{lemma}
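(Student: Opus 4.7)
The plan is to prove this by direct computation, since the statement is purely algebraic: it reduces to a single $2\times 2$ matrix product and a comparison of diagonal entries.

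First I would write down $A(\alpha_n,z)^* = \frac{1}{\sqrt{1-|\alpha_n|^2}}\begin{pmatrix}\overline{z} & -\overline{\alpha_n z} \\ -\overline{\alpha_n} & 1\end{pmatrix}$ and then compute $A(\alpha_n,z)^* \cJ A(\alpha_n,z)$ entry by entry. The off-diagonal entries should both vanish: the $(1,2)$-entry collects $\overline{z}\,\overline{\alpha_n} - \overline{\alpha_n}\,\overline{z}$ and the $(2,1)$-entry collects $\alpha_n z - \alpha_n z$. The diagonal entries produce $-|z|^2(1-|\alpha_n|^2)$ and $1-|\alpha_n|^2$ respectively, and the overall prefactor $\frac{1}{1-|\alpha_n|^2}$ from the product of the two normalizations then cancels, giving $A(\alpha_n,z)^* \cJ A(\alpha_n,z) = \diag(-|z|^2, 1)$.

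Dividing by $|z|$ and subtracting $\cJ = \diag(-1,1)$ yields $\diag(1-|z|,\, |z|^{-1}-1)$, which is exactly the claimed identity. The sign claims are then immediate: when $0<|z|\le 1$ both diagonal entries are nonnegative, so the matrix is positive semidefinite, and when $|z|\ge 1$ both diagonal entries are nonpositive, so it is negative semidefinite.

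I do not expect any real obstacle here, since the computation is elementary; the only care needed is to track the $1/\sqrt{1-|\alpha_n|^2}$ normalization (which contributes a factor $\frac{1}{1-|\alpha_n|^2}$ that then cancels neatly against the $1-|\alpha_n|^2$ factors arising in both diagonal entries). The conceptual content of the lemma, which is what matters for the OPUC sections, is that the Szeg\H{o} transfer matrices are $\cJ$-contractive/expansive with an explicit deficit governed by the deviation of $|z|$ from $1$ — an OPUC analog of the $j$-monotonicity \eqref{eq:jMonotonic} on which all the earlier canonical systems arguments relied.
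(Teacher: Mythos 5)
Your direct computation is correct and is exactly the paper's proof, which simply records that the identity follows from a straightforward calculation. One small transcription slip: the $(2,1)$ entry of $A(\alpha_n,z)^*$ is $-\alpha_n$, not $-\overline{\alpha_n}$ --- but your subsequent entry-by-entry bookkeeping (the $(2,1)$ entry of the product collecting $\alpha_n z - \alpha_n z$, and the diagonal entries $-|z|^2(1-|\alpha_n|^2)$ and $1-|\alpha_n|^2$) already uses the correct adjoint, so nothing downstream is affected.
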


The proof is a straightforward calculation. Note that division by $\lvert z \rvert$ compensates for the fact that $\det A(\alpha_n,z) = z$; it effectively normalizes the matrices to unit determinant. The directions of inequalities in Lemma~\ref{lemmaSzego1step} may seem opposite from what is desired; this will be rectified by conjugating $A(\alpha_n,z)$ and $S(n,z)$ by the matrix
\[
j_1 = \begin{pmatrix}
0 & 1 \\
1 & 0
\end{pmatrix}.
\]

We also denote by
\[
\cC =  \begin{pmatrix}
1 & -i \\
1 & i
\end{pmatrix}
\]
the matrix corresponding to the Cayley transform. The Cayley transform allows us to switch between signature matrices by
\[
\frac 12 \cC^* \cJ \cC = -i j
\]
(the factor $1/2$ corresponds to $\det \cC = 2i$ and would vanish if we had normalized the determinant of $\cC$). The values of the Carath\'eodory function $F(z)$ are uniquely determined by
\[
 \begin{pmatrix} F(z) +1 \\ F(z) - 1\end{pmatrix}^* S(n,z)^* \cJ S(n,z) \begin{pmatrix} F(z) +1 \\ F(z) - 1\end{pmatrix} \le 0 \qquad \forall n\in\bbN,z\in\bbD
\]
and this condition can be written (note $j_1 \cJ j_1 = - \cJ$) as
\begin{equation}\label{20aug3}
 \begin{pmatrix} i F(z)  \\ 1  \end{pmatrix}^* \cC^*  j_1 S(n,z)^* j_1 \cJ j_1 S(n,z) j_1 \cC \begin{pmatrix} i F(z) \\ 1\end{pmatrix} \ge 0 \qquad \forall n\in\bbN,z\in\bbD.
\end{equation}
Now we are ready to transform and embed the Szeg\H o transfer matrices into a canonical system. This embedding is inspired by a method used by Damanik--Yuditskii \cite{DamYud}, where comb domains related to OPUC were mapped to comb domains periodic in the spectral parameter.

\begin{lemma}
The functions
\[
T(n,z) = e^{-inz/2} \cC^{-1} j_1 S(n, e^{iz} ) j_1 \cC
\]
are a $j$-monotonic family of entire $j$-inner functions and obey $T(0,z)=I_2$. They have unit determinant and are in the limit point case with $m$-function
\[
m(z) = i F(e^{iz}).
\]
\end{lemma}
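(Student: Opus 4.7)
My plan is to verify the five assertions—$T(0,z) = I_2$, entirety in $z$, unit determinant, $j$-monotonicity (which yields the $j$-inner property), and the identification $m(z) = iF(e^{iz})$ together with the limit point case—by reducing the $j$-theory of $T$ to the $\cJ$-theory of $S$ via the Cayley transform. The elementary statements are immediate: $T(0,z) = I_2$ follows from $S(0,z) = I_2$; entirety follows from the composition of the entire function $z \mapsto e^{iz}$ with the polynomial matrix $S(n,\cdot)$ and multiplication by the entire scalar $e^{-inz/2}$; unit determinant follows from $\det S(n,w) = w^n$ combined with the factor $(e^{-inz/2})^2 = e^{-inz}$ contributed by the scalar prefactor in a $2\times 2$ determinant.

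The heart of the argument is $j$-monotonicity, where the exponential prefactor and the Cayley transform conspire. The key algebraic identities are $\cC^* \cJ \cC = -2ij$ (direct calculation), $j_1 \cJ j_1 = -\cJ$, and the crucial scalar matching $|e^{-inz/2}|^2 = e^{n\Im z} = |e^{iz}|^{-n}$. Substituting into $T(n,z)^* j T(n,z)$ and simplifying yields
\[
\frac{T(n,z)^*jT(n,z) - j}{i} = -\tfrac{1}{2}\cC^* j_1 \left[|e^{iz}|^{-n} S(n,e^{iz})^* \cJ S(n,e^{iz}) - \cJ\right] j_1 \cC.
\]
For $z \in \bbC_+$ we have $|e^{iz}| < 1$, so iterating Lemma~\ref{lemmaSzego1step} (after dividing by $|w|^{n+1}$) shows that $|e^{iz}|^{-n} S(n,e^{iz})^* \cJ S(n,e^{iz})$ is monotonically nondecreasing in $n$ with base value $\cJ$. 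This gives the $j$-inner property, and the analogous formula with $n_1, n_2$ in place of $0, n$ gives the full $j$-monotonicity of the family.

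The identification of the $m$-function uses the same algebraic manipulation to rewrite the Carath\'eodory characterization \eqref{20aug3} as the Weyl-disk condition
\[
\tfrac{1}{i}\begin{pmatrix} iF(e^{iz}) \\ 1 \end{pmatrix}^* T(n,z)^*\, j\, T(n,z) \begin{pmatrix} iF(e^{iz}) \\ 1 \end{pmatrix} \ge 0,
\]
i.e., $iF(e^{iz}) \in D(n,z)$ for all $n$. Because $\mu$ has infinite support, Verblunsky's theorem guarantees that $F$ is uniquely determined by $\{\alpha_n\}$, so the nested Weyl disks must shrink to the single point $iF(e^{iz})$, establishing both the limit point case and the $m$-function identity.

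The main obstacle is purely the algebraic bookkeeping in the Cayley-conjugation step: tracking signs, factors of $i$, the conjugation by $j_1$, and the scalar prefactor until they collapse into the clean formula above. The decisive observation is that the specific prefactor $e^{-inz/2}$ is exactly what is needed for its modulus squared to absorb the $|w|^n$ normalization appearing in Lemma~\ref{lemmaSzego1step}; once this matching is seen, the rest of the proof is routine.
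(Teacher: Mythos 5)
Your proposal is correct and follows essentially the same route as the paper: the paper verifies the $\cJ$-to-$j$ conversion one Szeg\H{o} step at a time (showing $\frac{1}{i}(L_n(z)^*jL_n(z)-j)\le 0$ and sandwiching with $T(n,z)$), whereas you conjugate the whole product at once and iterate Lemma~\ref{lemmaSzego1step} at the level of $S(n,e^{iz})$ — an equivalent computation, and your key identity $\frac{1}{i}(T(n,z)^*jT(n,z)-j)=-\tfrac12\cC^*j_1[\,|e^{iz}|^{-n}S(n,e^{iz})^*\cJ S(n,e^{iz})-\cJ\,]j_1\cC$ checks out, as does the matching $|e^{-inz/2}|^2=|e^{iz}|^{-n}$. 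The only deviation is your justification of the limit point case via Verblunsky's theorem, which implicitly uses that a nondegenerate limit disk would yield several measures with the same Verblunsky coefficients; the paper simply invokes the standard fact that OPUC are always in the limit point case, which is the cleaner citation.
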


\begin{proof}
In terms of $T(n,z)$, Szeg\H o recursion rewrites as $T(0,z) = I_2$, 
\[
T(n+1,z) = L_n(z) T(n,z), \qquad L_n(z) = e^{-iz/2} \cC^{-1} j_1 A(\alpha_n, e^{iz})  j_1  \cC.
\]
%Note that
%\[
%\cC^{-1} = \frac 1{2i} \begin{pmatrix}
%i & i \\
%-1 & 1
%\end{pmatrix} = \frac 12 \begin{pmatrix}
%1 & 1 \\
%i & -i
%\end{pmatrix}
%\]
Now Lemma~\ref{lemmaSzego1step} implies that
\[
\frac{ L_n(z)^* j L_n(z) - j}i \le 0
\]
 for $\lvert e^{iz} \rvert \le 1$, with the sign of the inequality switched for $\lvert e^{iz} \rvert \ge 1$. Multiplying by $T(n,z)^*$ from the left and $T(n,z)$ from the right implies the $j$-monotonicity.
 
The scalar factor $e^{-inz/2}$ compensates for $\det S(n,e^{iz}) = e^{inz}$ and ensures $\det T(n,z) = 1$.

Plugging in $e^{iz}$ in place of $z$ in \eqref{20aug3} and using $\cC^* \cJ \cC = -2ij$ gives
\[
 \begin{pmatrix} i F(e^{iz})  \\ 1  \end{pmatrix}^* T(n,z)^* (-ij)  T(n,z) \begin{pmatrix} i F(e^{iz}) \\ 1\end{pmatrix} \ge 0 \qquad \forall n\in\bbN, 
 \]
Since OPUC are always in the limit point case, this uniquely determines the value $iF(e^{iz})$ and allows us to read off $m(z) = i F(e^{iz})$.
\end{proof}

To embed this in a canonical system, we note that a direct calculation gives
\[
L_n(0)^{-1} L_n(z) = \cC^{-1} j_1 \begin{pmatrix} e^{iz/2} & 0 \\ 0 & e^{-iz/2} \end{pmatrix} j_1 \cC = \cC^{-1} j_1 e^{-iz \cJ/2} j_1 \cC
\]
and therefore
\[
L_n(0)^{-1} L_n(z) = \cC^{-1} e^{iz \cJ / 2} \cC = e^{z j/2}.
\]
Thus the matrix functions $M(n,z) = T(n,0)^{-1} T(n,z)$ obey
\[
M(n+1,z) = T(n,0)^{-1} L_n(0)^{-1} L(n,z) T(n,z) = T(n,0)^{-1} e^{zj/2} T(n,0) M(n,z)
\]
and they are interpolated on $[n,n+1]$ by the canonical system
\[
\partial_x M(x,z) = z j H(x) M(x,z), \qquad H(x) = \frac 12 j^{-1} T(\lfloor x \rfloor,0)^{-1} j T(\lfloor x \rfloor, 0) = \frac 12 T(\lfloor x \rfloor,0)^* T(\lfloor x \rfloor, 0).
\]
Note that $H$ is once again a quadratic expression involving entries of the transfer matrix, which allows to recover results from subordinacy theory from OPUC. We omit further details and turn to the kernels in order to prove Theorem~\ref{thmOPUC}.

Now we have two kernels at our disposal: the Christoffel--Darboux kernel for OPUC given by \eqref{CDkernelOPUC1} and the scalar kernel obtained by the prescription for canonical systems \eqref{frommatrixtoscalarkernel},
\[
K_n(z,w) = \begin{pmatrix} 1 \\0 \end{pmatrix}^* \frac{ T(n,w)^* j T(n,z) - j}{\overline{w} - z} \begin{pmatrix} 1 \\0 \end{pmatrix}.
\]
The two are closely related:

\begin{lemma} For all $z, w\in \bbC$,
\begin{equation}\label{CDkernelOPUC2}
e^{-in (z-\overline{w})/2}  k_n(e^{iz}, e^{iw} ) =  \frac{2 i ( \overline{w}-z)}{ 1 - e^{i (z-\overline{w})}} K_n(z,w).
\end{equation}
\end{lemma}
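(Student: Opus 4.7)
The plan is to derive the identity \eqref{CDkernelOPUC2} by translating the OPUC Christoffel--Darboux formula into a quadratic form involving the Szeg\H o transfer matrix $S(n,z)$, and then using the definition of $T(n,z)$ together with the intertwining identities $j_1\cJ j_1=-\cJ$ and $\tfrac12\cC^*\cJ\cC=-ij$ to convert the quadratic form in $S$ into the desired quadratic form in $T$.

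The first step is to express the OPUC kernel in transfer-matrix form. A direct check using the Szeg\H o recursion shows $S(n,z)\binom{1}{1}=\binom{\varphi_n(z)}{\varphi_n^*(z)}$, so the definition \eqref{CDkernelOPUC1} rewrites as
\begin{equation*}
(1-z\overline{w})\,k_n(z,w)=\begin{pmatrix}1\\1\end{pmatrix}^{\!*}\bigl(S(n,w)^*\cJ S(n,z)-\cJ\bigr)\begin{pmatrix}1\\1\end{pmatrix},
\end{equation*}
where the subtracted $\cJ$ is harmless since $\binom{1}{1}^*\cJ\binom{1}{1}=0$. (Alternatively, one may justify this via Lemma~\ref{lemmaSzego1step} iterated over $n$.) Substituting $z\mapsto e^{iz}$, $w\mapsto e^{iw}$ reduces the task to linking $S(n,e^{iz})$ with $T(n,z)$.

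Next, solving the defining relation $T(n,z)=e^{-inz/2}\cC^{-1}j_1 S(n,e^{iz})j_1\cC$ for $S$ gives $S(n,e^{iz})=e^{inz/2}j_1\cC T(n,z)\cC^{-1}j_1$. Plugging this into the formula for $k_n$ and using $j_1\cJ j_1=-\cJ$ followed by $\cC^*\cJ\cC=-2ij$, I would obtain
\begin{equation*}
S(n,e^{iw})^*\cJ S(n,e^{iz})=2i\,e^{in(z-\overline{w})/2}\,j_1\cC^{-*}\,T(n,w)^*jT(n,z)\,\cC^{-1}j_1.
\end{equation*}
A short computation shows $\cC^{-1}j_1\binom{1}{1}=\binom{1}{0}$, so sandwiching the previous display with $\binom{1}{1}^*\cdots\binom{1}{1}$ produces the quadratic form of $T(n,w)^*jT(n,z)$ against $\binom{1}{0}$. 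Since $j$ is antisymmetric, $\binom{1}{0}^*j\binom{1}{0}=0$, which is exactly what is needed to reinsert the subtracted $j$ into $T^*jT$ and connect to the paper's definition of $K_n(z,w)$.

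Combining the steps, the OPUC identity becomes
\begin{equation*}
(1-e^{i(z-\overline{w})})\,k_n(e^{iz},e^{iw})=2i\,e^{in(z-\overline{w})/2}\,(\overline{w}-z)\,K_n(z,w),
\end{equation*}
and dividing through by $1-e^{i(z-\overline{w})}$ and $e^{in(z-\overline{w})/2}$ yields \eqref{CDkernelOPUC2}. There is no serious obstacle: the proof is a bookkeeping exercise. The only thing to be careful about is consistently tracking the four scalar factors ($e^{\pm inz/2}$, the determinants of $\cC$, the sign in $\cC^*\cJ\cC=-2ij$, and the antisymmetry of $j_1\cJ j_1$), and verifying that the ``constant'' terms cancel ($\binom{1}{1}^*\cJ\binom{1}{1}=2i\binom{1}{0}^*j\binom{1}{0}=0$), which is what allows the affine adjustments by $-\cJ$ and $-j$ to be swapped freely.
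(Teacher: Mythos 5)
Your proof is correct and is essentially the same computation as the paper's: both translate between the two kernels via the definition $T(n,z)=e^{-inz/2}\cC^{-1}j_1S(n,e^{iz})j_1\cC$, using $S(n,z)\binom{1}{1}=\binom{\varphi_n(z)}{\varphi_n^*(z)}$, the Cayley identity $\cC^*\cJ\cC=-2ij$, and the vanishing of the forms $\binom{1}{1}^*\cJ\binom{1}{1}$ and $\binom{1}{0}^*j\binom{1}{0}$. The only cosmetic difference is that the paper computes the vector $T(n,z)\binom{1}{0}$ explicitly from the entries of $S$, whereas you conjugate the whole quadratic form; all the scalar factors check out.
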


\begin{proof}
In terms of orthonormal polynomials $\varphi_n$, second kind orthonormal polynomials $\psi_n$, and the reflected polynomials $\varphi_n^*$ and  $\psi_n^*(z) = z^n \overline{ \psi_n (1 / \overline{z} ) }$,  the Szeg\H o transfer matrices are of the form
\[
S(n,z) = \begin{pmatrix}
\frac 12 ( \varphi_n(z) + \psi_n(z) ) & \frac 12 ( \varphi_n(z) - \psi_n(z) )  \\
\frac 12 ( \varphi_n^*(z) - \psi_n^*(z) ) & \frac 12 ( \varphi_n^*(z) + \psi_n^*(z) ) 
\end{pmatrix}.
\]
A direct calculation gives
\[
T_n(z)  \begin{pmatrix} 1 \\0 \end{pmatrix} = e^{-inz/2} \cC^{-1}  \begin{pmatrix}  \varphi_n^*(e^{iz}) \\ \varphi_n(e^{iz}) \end{pmatrix},
\]
so using $(\cC^{-1})^* j \cC^{-1} = \frac i2 \cJ$, we obtain
%\[
%K_n(z,w) = \frac i2 \frac{e^{in\overline{w}/2} e^{-inz/2} }{ \overline{w} - z }  \begin{pmatrix}  \varphi_n^*(e^{iw}) \\ \varphi_n(e^{iw}) \end{pmatrix}^* \cJ  \begin{pmatrix}  \varphi_n^*(e^{iz}) \\ \varphi_n(e^{iz}) \end{pmatrix}.
%\]
%so
\[
K_n(z,w) = - \frac{i}2 \frac{ e^{-in (z - \overline{w}) / 2} }{\overline{w} - z} \left(  \varphi_n^*(e^{iz}) \overline{ \varphi_n^* (e^{iw}) } -  \varphi_n(e^{iz}) \overline{ \varphi_n (e^{iw}) } \right).
\]
%This is the kernel associated with the canonical system. 
Comparing this with \eqref{CDkernelOPUC1} gives \eqref{CDkernelOPUC2}.
\end{proof}

\begin{proof}[Proof of Theorem~\ref{thmOPUC}]
A limit of $\Re F$ along a sector implies a limit of $\Im m (z) = \Re F(e^{iz})$ along a suitable sector. Thus we obtain for some $0 < \beta < \pi/2$,
\[
\lim_{\substack{z \to \xi \\ \beta \le \arg(z-\xi) \le \pi - \beta}} \Im m(z) = g_\mu(\xi).
\]
Comparing this to \eqref{limitImm} gives $f_\mu(\xi) = g_\mu(\xi) / \pi$. Moreover, by \eqref{CDkernelOPUC2},
\[
k_n(e^{i\xi}, e^{i\xi}) = 2 K_n(\xi, \xi).
\]
Now substituting $\xi + z / k_n(e^{i\xi}, e^{i\xi})$, $\xi + w / k_n(e^{i\xi}, e^{i\xi})$ in \eqref{CDkernelOPUC2} instead of $z,w$, we conclude that  the left-hand side of \eqref{OPUCconclusion} is equal to
\[
\lim_{n\to\infty} \frac{K_n ( \xi + \frac {z}{2\pi f_\mu(\xi) K_n(\xi,\xi)},  \xi + \frac {w}{2\pi f_\mu(\xi) K_n(\xi,\xi)}   )}{ K_n(\xi,\xi)}.
% \frac{ k_n (e^{i(\xi + z / k_n(e^{i\xi}, e^{i\xi} ))} , e^{i(\xi + w / k_n(e^{i\xi}, e^{i\xi} ))} ) }{ k_n(e^{i\xi}, e^{i\xi}) }
\]
Using the conclusion of Theorem~\ref{thmImaginaryLimitCanonical} and rescaling $z,w$ by a factor of $2\pi$ completes the proof.
\end{proof}

\providecommand{\MR}[1]{}
\providecommand{\bysame}{\leavevmode\hbox to3em{\hrulefill}\thinspace}
\providecommand{\MR}{\relax\ifhmode\unskip\space\fi MR }
% \MRhref is called by the amsart/book/proc definition of \MR.
\providecommand{\MRhref}[2]{%
  \href{http://www.ams.org/mathscinet-getitem?mr=#1}{#2}
}
\providecommand{\href}[2]{#2}

\end{document}